\title{ \bf Stable Tameness of Two-Dimensional Polynomial Automorphisms Over a Regular Ring} \author{Joost Berson, Arno van den Essen, and David Wright}
\begin{document}
    \maketitle

 \input amssym.def
    \input amssym
    \theoremstyle{definition} 
    \newtheorem{rema}{Remark}[section]
    \newtheorem{questions}[rema]{Questions}
    \newtheorem{assertion}[rema]{Assertion}
    \newtheorem{num}[rema]{}
           \newtheorem{exam}[rema]{Example}
        \newtheorem{ques}[rema]{Question}
    \newtheorem{claim}[rema]{Claim}
    \theoremstyle{plain} 
    \newtheorem{propo}[rema]{Proposition}
    \newtheorem{theo}[rema]{Theorem} 
    \newtheorem{conj}[rema]{Conjecture}
    \newtheorem{quest}[rema]{Question} 
    \theoremstyle{definition}
    \newtheorem{defi}[rema]{Definition}
    \theoremstyle{plain}
    \newtheorem{lemma}[rema]{Lemma} 
    \newtheorem{corol}[rema]{Corollary}
    \newtheorem{rmk}[rema]{Remark}
    \newcommand{\del}{\triangledown}
    \newcommand{\nno}{\nonumber} 
    \newcommand{\lbar}{\big\vert}
    \newcommand{\mbar}{\mbox{\large $\vert$}} 
    \newcommand{\p}{\partial}
    \newcommand{\dps}{\displaystyle} 
    \newcommand{\bra}{\langle}
    \newcommand{\ket}{\rangle} 
    \newcommand{\kr}{\mbox{\rm Ker}\ }
    \newcommand{\res}{\mbox{\rm Res}} 
    \renewcommand{\hom}{\mbox{\rm Hom}}
    \newcommand{\pf}{{\it Proof:}\hspace{2ex}}
    \newcommand{\epf}{\hspace{2em}$\Box$}
    \newcommand{\epfv}{\hspace{1em}$\Box$\vspace{1em}}
    \newcommand{\nord}{\mbox{\scriptsize ${\circ\atop\circ}$}}
    \newcommand{\wt}{\mbox{\rm wt}\ } 
    \newcommand{\clr}{\mbox{\rm clr}\ }
    \newcommand{\ideg}{\mbox{\rm Ideg}\ } 
    \newcommand{\gC}{{\mathfrak g}_{\mathbb C}}
    \newcommand{\hatC}{\widehat {\mathbb C}} 
    \newcommand{\bZ}{{\mathbb Z}} 
    \newcommand{\bQ}{{\mathbb Q}}
    \newcommand{\bR}{{\mathbb R}} 
    \newcommand{\bN}{{\mathbb N}}
    \newcommand{\bT}{{\mathbb T}} 
    \newcommand{\fg}{{\mathfrak g}}
    \newcommand{\fgC}{{\mathfrak g}_{\bC}} 
    \newcommand{\cD}{\mathcal D} 
    \newcommand{\cP}{\mathcal P}
    \newcommand{\cC}{\mathcal C} 
    \newcommand{\cS}{\mathcal S}
    \newcommand{\EGC}{{\cal E}(\GC)}
    \newcommand{\cLGC}{\widetilde{L}_{an}\GC}
    \newcommand{\LGC}{{L}_{an}\GC} 
    \newcommand{\BQ}{\begin{eqnarray}}
    \newcommand{\EQ}{\end{eqnarray}} 
    \newcommand{\BQn}{\begin{eqnarray*}}
    \newcommand{\EQn}{\end{eqnarray*}} 
    \newcommand{\wtilde}{\widetilde}
    \newcommand{\Hol}{\mbox{Hol}} 
    \newcommand{\Hom}{\mbox{Hom}}
    \newcommand{\poly}{polynomial } 
    \newcommand{\polys}{polynomials }
    \newcommand{\pz}{\frac{\p}{\p z}} 
    \newcommand{\pzi}{\frac{\p}{\p z_i}}
    \newcommand{\edge}{\text{\raisebox{2.75pt}{\makebox[20pt][s]{\hrulefill}}}}
    \newcommand{\halfedge}{\text{\raisebox{2.75pt}{\makebox[10pt][s]{\hrulefill}}}}
    \newcommand{\n}{\notag}
    \newcommand{\C}{\mathbb C}
    \newcommand{\A}{\mathcal A}
    \newcommand{\Q}{\mathbb Q}
    \newcommand{\X}{X_1,\ldots,X_n}
    \newcommand{\Xmi}{X,\hat i}
    \newcommand{\Z}{Z_1,\ldots,Z_n}
     \newcommand{\Zmi}{Z,\hat i}
    \newcommand{\pa}{\partial}
    \newcommand{\D}{D_1,\ldots,D_n}
    \newcommand{\Del}{\text{\raisebox{2.1pt}{$\bigtriangledown$}}}
    \newcommand{\La}{\triangle}
    \newcommand{\Hess}{\text{\rm Hess}}
\newcommand{\F}{F_{1},\ldots,F_{n}}
\newcommand{\G}{G_{1},\ldots,G_{n}}
\newcommand{\GA}{\text{GA}}
\newcommand{\SA}{\text{SA}}
\newcommand{\MA}{\text{MA}}
\newcommand{\GL}{\text{GL}}
\newcommand{\SL}{\text{SL}}
\newcommand{\GE}{\text{GE}}
\newcommand{\Tr}{\text{Tr}}
\newcommand{\Af}{\text{Af}}
\newcommand{\Bf}{\text{Bf}}
\newcommand{\W}{\text{W}}
\newcommand{\EA}{\text{EA}}
\newcommand{\BA}{\text{BA}}
\newcommand{\E}{\text{E}}
\newcommand{\B}{\text{B}}
\newcommand{\T}{\text{TA}}
\newcommand{\Di}{\text{D}}
\newcommand{\WT}{\text{WTA}}
\newcommand{\degr}{\text{deg}\,}
\newcommand{\supp}{\text{supp}\,}
\newcommand{\ideaala}{\mathfrak{a}}
\newcommand{\ideaalb}{\mathfrak{b}}
\newcommand{\ideaalc}{\mathfrak{c}}
\newcommand{\m}{\mathfrak{m}}
\newcommand{\id}{\text{id}}
\abstract{In this paper it is established that all two-dimensional polynomial automorphisms over a regular ring $R$ are stably tame.  This results from the main theorem of this paper, which asserts that an automorphism in any dimension $n$ is stably tame if said condition holds point-wise over Spec$\,R$.  A key element in the proof is a theorem which yields the following corollary: Over an Artinian ring $A$ all two-dimensional polynomial automorphisms having Jacobian determinant one are stably tame, and are tame if $A$ is a $\Q$-algebra.  Another crucial ingredient, of interest in itself, is that stable tameness is a local property:  If an automorphism is locally tame, then it is stably tame.}


\setcounter{equation}{0}
\setcounter{rema}{0}

\section{Introduction} The famous theorem of Jung and Van der Kulk (\cite{Jung},\cite{vdK}) asserts that all two-dimensional polynomial automorphisms over a field are tame. (See \S2 for the definition of tameness and other terminology.)  Jung proved this for fields of characteristic zero and Van der Kulk generalized it to arbitrary characteristic.  It is well-known that this fails to be true over a domain $R$ which is not a field.  A standard example of a non-tame automorphism is $$\left(X+a(aY+X^2),Y-2X(aY+X^2)-a(aY+X^2)^2\right)$$ where $a$ is any non-zero non-unit in $R$. For $R=k[T]$ and $a=T$, $k$ a field, this is the famous example of Nagata \cite{Nag} which he conjectured to be non-tame as a 3-dimensional automorphism over $k$.  Shestakov and Umirbaev \cite{S-U} finally proved Nagata's conjecture.\footnote{The proof depends on a crucial inequality established in \cite{S-U2}, a result generalized and clarified by Kuroda in \cite{Kur}.} Meanwhile it had been shown by Smith \cite{Smith} and Wright (unpublished) that Nagata's example is \underbar{stably} tame, in fact tame with the addition of one more variable.\footnote{Smith's method uses the fact that this automorphism is the exponential of a locally nilpotent derivation.  However, one can modify Nagata's example slightly so that it does not appear to be such an exponential, but still becomes tame with one new dimension.}  The matter of stable tameness is one of intrigue because no example has been produced (to the authors' knowledge) of a polynomial automorphism over a domain which cannot be shown to be stably tame.

The remarkable result of Umirbaev and Shestakov mentioned above actually asserts that an automorphism in three variables $T,X,Y$ over a field $k$ which fixes $T$ is tame (if and) only if it is tame as an automorphism over
 $k[T]$.  As there are known to be many non-tame two-dimensional automorphisms over $k[T]$, this establishes the existence of many non-tame three-dimensional automorphisms over $k$.  However, it will follow from the main result of this paper (Corollary \ref{cor}) that {\it all three-dimensional automorphisms of this type are stably tame over $k$.}

The main result of this paper is Theorem \ref{main0} (Main Theorem), which asserts that all two-dimensional polynomial automorphisms over a regular ring are stably tame.  It is proved by a somewhat delicate argument for which Theorem \ref{modnil} plays an essential role.  The latter result  yields the consequence that all two-dimensional automorphisms over an Artinian ring $A$ are stably tame, Theorem \ref{dimzerotame}.  Moreover, they are actually tame in the case $A$ is a $\Q$-algebra.  The latter statement can be viewed as a generalization of Jung's Theorem, and it yields a stronger version of the Main Theorem for the case of a Dedekind $\Q$-algebra (Theorem \ref{submain}).  Another keystone in the proof of the Main Theorem is Theorem \ref{loc}, which reveals stable tameness to be a local property.

Also used in the proof of the Main Theorem are the Jung-Van der Kulk Theorem, a number of technical results, and a theorem of Suslin, all of which appear in \S\ref{prelims}.  Stable tameness has the flavor of $K$-theory, and some of the tools are suggestive of those used to prove results about the behavior of the functor $K_1$ under polynomial extensions (compare Lemma \ref{linconjelem}, for example, with Suslin's Lemma 3.3 in \cite{Sus}).  

We here note that the appeal to Suslin's theorem (Theorem \ref{suslin}) is precisely where the hypothesis $A$ is regular is required.  This is evoked to conclude the proof of Theorem \ref{locmod}, on which the Main Theorem depends.  The Main Theorem certainly fails for non-reduced rings, over which there exist automorphisms whose Jacobian determinant lies outside the base ring -- a property which precludes stable tameness.  Beyond that we have not investigated the extent to which the regularity hypothesis can be relaxed (if at all).

This paper is organized as follows:  The basic definitions and facts surrounding automorphisms and automorphism groups are laid out in \S\ref{notation}.  Subsequently, \S\ref{prelims} presents most of the technical tools needed for the main results, which are then stated and proved in \S\ref{mainresults}.  The Main Theorem (Theorem \ref{main0}) is proved by a series of reductions to other assertions which are stated and proved as they are encountered rather than in \S\ref{prelims} in order to make the argument more transparent to the reader.

\section{Notation, Terminology, and First Observations}\label{notation}

\begin{num}\label{begin}In this paper ``ring" will mean ``commutative ring with identity". For $R$ a ring we sometimes write $R^{[n]}$ for the polynomial ring $R[\X]$.  We will often need to refer to the subalgebra $R[X_1,\ldots,X_{i-1},X_{i+1},\ldots,X_n]$ for $i\in\{1,\ldots,n\}$, so we will use the shorter notation $R[\Xmi]$ to denote the latter.
\end{num}

\begin{num}  The symbol $ \GA_n(R)$ denotes the {\it general automorphism group, }by which we mean the automorphism group of $\text{Spec}\,R^{[n]}$ over $\text{Spec}\,R$.  As such, it is anti-isomorphic to the group of $R$-algebra automorphisms of $R^{[n]}$.   An element of $\GA_n(R)$ is represented by a vector $\varphi=(\F)\in(R^{[n]})^n$; we will consistently use Greek letters to denote automorphisms.  The variables being used in the vector representation of elements of $\GA_n(R)$ ($\X$ at the moment) are called {\it dimension variables} to distinguish them from variables that may be a part of the coefficient ring $R$, which may itself be a polynomial ring.  We often write $X$, or $\id_n$, or simply $\id$, for the identity element $(\X)$ of $\GA_n(R)$; we also will sometimes use such vector notation for an arbitrary polynomial map or for a system of variables, e.g., $H$ for $(H_1,\ldots,H_n)\in (R^{[n]})^n$ or $Y$ for variables $Y_1,\ldots,Y_n$.  We write $J\varphi$ for the Jacobian matrix of an automorphism $\varphi$.  
\end{num}

\begin{num}\label{defs}We have the following subgroups of $\GA_n(R)$ (and we here suppress $R$):
\begin{itemize}
\item The {\it general linear group} $\GL_n$ is contained in $\GA_n$ in an obvious way.  If $\alpha\in\GL_n(R)$ has matrix representation $\mathcal A$, then $\alpha$ has the vector representation 
$$
\left(\mathcal A\cdot X^\text{t}\right)^\text{t}
$$
for which we will engage in a slight abuse of notation by suppressing the transposes and writing simply $\mathcal AX$. We will use standard notation for the other linear groups, such as $\SL_n$, $\E_n$ (the subgroup of $\GL_n$ generated by elementary matrices), $\Di_n$ (the group of invertible diagonal matrices), and $\GE_n$ (the subgroup generated by $\E_n$ and $\Di_n$).  
\item $\SA_n$, the {\it special automorphism group, }is the subgroup of all $\varphi$ for which $|J\varphi|=1$.  (Here and throughout this paper $|\,\,\,|$ denotes determinant.)
\item $\EA_n$ is the subgroup generated by the elementary automorphisms.  An {\it elementary} automorphism is one of the form 
\begin{equation}\label{e} e_i(f)=(X_1,\ldots,X_{i-1},X_i+f,X_{i+1},\ldots,X_n)\end{equation} 
for some $i\in\{1,\ldots,n\}$, $f\in R[\Xmi]$ (see \ref{begin} for notation).  An elementary automorphism of the above form for a specific $i$ is called {\it elementary in the $i^\text{th}$ position.} One quickly verifies that $e_i$ is a group homomorphism from the additive group of $R[\Xmi]$ to $\GA_n$:
\begin{equation}e_i(f+g)=e_i(f)\circ e_i(g)
\end{equation}
This notation is suggestive of the symbol $e_{ij}(a)$ ($i\ne j$) from linear algebra, which denotes the elementary matrix having $a$ in the $ij$ position, 1 in each diagonal position, and 0 elsewhere.  By the inclusion of $\GL_n$ in $\GA_n$ we have $e_{ij}(a)=e_i(aX_j)$.  Hence $\E_n\subseteq\EA_n$.  It is not difficult to see that $\E_n=\EA_n\cap\GL_n$.  Also note that $\EA_n\subseteq \SA_n$.
\item $\T_n$, the group of {\it tame} automorphisms, is the subgroup generated by $\GL_n$ and $\EA_n$.  Over a domain these are the only obvious examples of polynomial automorphisms, though we know others exist.  A fundamental issue -- one which this paper addresses -- is the matter of determining when automorphisms are tame.
\item $\Tr_n$ is the subgroup of translations.  A {\it translation} is an automorphism of the form $X+v=(X_1+v_1,\ldots,X_n+v_n)$ with $v=(v_1,\ldots,v_n)\in R^n$.  This group is isomorphic to the additive group $R^n$ via the map $v\mapsto X+v$, for $v\in R^n$. 
\item $\Af_n$, the {\it affine} group, is the subgroup generated by $\GL_n$ and $\Tr_n$.  It is, in fact, the semidirect product $\GL_n\ltimes\Tr_n$, with $\GL_n$ acting by conjugation on $\Tr_n\cong R^n$ in the obvious way.  Namely, for $\alpha\in\GL_n$ and $v\in R^n$,
\begin{equation}\label{conjugate}\alpha\circ(X+v)\circ\alpha^{-1}=X+\left((\alpha\cdot v^\text{t})^\text{t}\right)\,,\end{equation}
where $v^\text{t}$ is $v$ written as a column vector and $\alpha\cdot v^\text{t}$ is matrix multiplication.
\item $\GA_n^0$ is the subgroup of {\it origin preserving} automorphisms, i.e., those of the form $\varphi=(\F)$ with $F_i(0,\ldots,0)=0$ for $i=1,\ldots,n$.  Clearly $\GA_n^0$ contains $\GL_n$.
\end{itemize}
\end{num}

\begin{num}[$\delta$ notation]\label{delta} It will be convenient, when $n$ is understood, to write $\delta_i$ for the $n$-dimensional vector $(0,\ldots,0,1,0,\ldots,0)$ with the 1 in the $i^\text{th}$ position.   Note then, that the elementary automorphism $e_i(f)$ of (\ref{e}) can be written using vector notation as
$e_i(f)=X+f\delta_i$.
\end{num}

\begin{num} If $G$ and $H$ are subgroups of some group, we write $\langle G,H\rangle$ for the subgroup generated by $G\cup H$.  For example $\T_n=\langle\GL_n,\EA_n\rangle$ and $\GE_n=\langle\Di_n,\E_n\rangle$.  
\end{num}

\begin{num} For any subgroup $G$ of $\GA_n$,  we write $G^0$ for $G\cap \GA_n^0$.  Thus we have $\T_n^0$, $\EA_n^0$, etc.  One easily verifies that $\EA_n^0$ is generated by elementary automorphisms of the type $e_i(f)$ where $f$ has 0 constant term, and that $\T_n^0=\langle\,\GL_n\,,\,\EA_n^0\,\rangle$.
\end{num}

\begin{defi}\label{tamequiv} We say $\varphi,\psi\in\GA_n(R)$ are {\it tamely equivalent} (respectively {\it elementarily equivalent}) if there exist $\epsilon,\epsilon'$ in $\T_n(R)$ (resp.\  $\EA_n(R)$) such that $\epsilon \varphi\epsilon'= \psi $.  To show that an automorphism is tame (resp.\   a product of elementaries) we may replace it by an automorphism to which it is tamely (resp.\ elementarily) equivalent.
\end{defi}

\begin{num}[Base change]\label{modgp} All of the groups defined in \ref{defs} can be viewed as functors.  A ring homomorphism $R\to S$ induces a group homomorphism $\GA_n(R)\to\GA_n(S)$ in a functorial way, and the same holds replacing $\GA_n$ with any of the subgroups defined above.  
\begin{enumerate}
\item We will often encounter the case where $S=R/I$ for some ideal $I\subseteq R$.  In this situation we will often write $\bar\varphi$ for the image of $\varphi\in\GA_n(R)$ in $\GA_n(R/I)$.  
\item If $t\in R$ we write $R_t$ for the localization $R[1/t]$ of $R$, and write $\varphi_t$ for the image of $\varphi$ in $\GA_n(R_t)$.  \item In the case where $R$ is a polynomial ring $A[Z_1,\ldots,Z_r]$ we will sometimes denote an element $\varphi\in\GA_n(R)$ by $\varphi(Z_1,\ldots,Z_r)$ as this allows us to write $\varphi(z_1,\ldots,z_r)$ for the base change that specializes $Z_i$ to $z_i$, where $z_1,\ldots,z_r$ lie in some $A$-algebra.   
\end{enumerate}
\end{num}

\begin{defi}\label{vanishing} In the situation of 3 above, we say that $\varphi\in\GA_n(A[Z_1,\ldots,Z_r])$ is $Z_j${\it -vanishing} if $\varphi(Z_1,\ldots,Z_{j-1},0,Z_{j+1},\ldots,Z_r)=\id_n$.
\end{defi}

\begin{num}[Lifting elementary automorphisms]\label{lift} If $R\to \bar R$ is a surjective ring homomorphism, then any elementary automorphism $\bar\rho$ over $\bar R$ lifts to an elementary automorphism $\rho$ over $R$.  It follows that the base change homomorphism $\EA_n(R)\to\EA_n(\bar R)$ is surjective.
\end{num}

\begin{num}[Stabilization]\label{stabdef}The results herein involve the concept of {\it stabilization,} which refers to the embedding of $\GA_n(R)$ into $\GA_{n+m}(R)$ (the ``stabilization homomorphism").  If $\varphi=F=(\F)\in\GA_n(R)$, we write $\varphi^{[m]}$ for its image $(\F,X_{m+1},\ldots,X_{n+m})=(F,\id_m)$ in $\GA_{n+m}(R)$; we also sometimes just write $\varphi$ for $\varphi^{[m]}$.  We say, for example, an automorphism $\varphi$ is {\it stably tame} if it becomes tame in some higher dimension.  We sometimes specify the number of dimensions by saying ``$\varphi$ becomes tame with the addition of $m$ dimensions (or variables)", meaning $\varphi^{[m]}$ is tame.  
\end{num}

\begin{num}[Direct limit]\label{dirlim} Stabilization (\ref{stabdef}) give us a chain of containments $$\GA_1\subset\GA_2\subset\GA_3\subset\cdots\,.$$  In the spirit of algebraic K-theory, we can form the direct limit, or formal ascending union, which we denote by $\GA_\infty$.  We can do the same with the other groups defined in \ref{defs}, so we have $\EA_\infty$, $\T_\infty$, etc.

\end{num}

\begin{num}[Restriction/extension of scalars]We will also encounter the ``restriction of scalars" embedding, by which we view $\GA_m(R^{[n]})$ as the subgroup of $\GA_{n+m}(R)$ which fixes (anti-isomorphically) the first $n$ variables.  By this identification we have $\EA_m(R^{[n]})\subset\EA_{n+m}(R)$, but the embedding does \underbar{not} automatically place $\T_m(R^{[n]})$ within $\T_{n+m}(R)$; we do not know whether this containment holds.  There are situations where elements of $\GL_m(R^{[n]})$ do not appear to be be tame over $R$.  This enigma presents an obstruction in the proof of Theorem \ref{locmod} which requires the use of Theorem \ref{suslin} (Suslin) to surmount.
\end{num}

\begin{num}[Products of rings]\label{prod} If a ring $R$ is a direct product of rings $R=R_1\times R_2$, then $\GA_n(R)$ is canonically isomorphic to the direct product of groups $\GA_n(R_1)\times\GA_n(R_2)$, and the same holds replacing $\GA$ by any of the subgroup functors defined in \ref{defs}.
\end{num}

\begin{num}[Scalar operator]\label{scalarop}Our results will require a scalar operator which applies only to origin preserving automorphisms.  Given $\varphi\in\GA_n^0(R)$, $t\in R$,  we define $\varphi^t\in\GA_n^0(R)$ as follows:  Write $\varphi=F_{(1)}+F_{(2)}+\cdots$ where $F_{(d)}$ is homogeneous of degree $d$.  We let $$\varphi^t=F_{(1)}+tF_{(2)}+t^2F_{(3)}+\cdots\,.$$  The following properties are easily verified:
\begin{itemize}
\item The map $\varphi\mapsto\varphi^t$ is a group endomorphism on $\GA_n^0(R)$, and this defines an action of multiplicative monoid $R$ on $\GA_n^0(R)$.
\item This action fixes elements of $\GL_n(R)$.
\item If $t\in R^*$, then $\varphi^t=\tau^{-1}\varphi\tau$, where $\tau=(tX_1,\ldots,tX_n)$.
\item We have $\varphi^0\in\GL_n(R)$, and this is just the linear homogeneous part of $\varphi$.
\end{itemize}
\end{num}

\section{Preliminaries}\label{prelims}

First we state the classical theorem which was mentioned in the introduction.

\begin{theo}[Jung-Van der Kulk \cite{Jung},\cite{vdK}] \label{JvdK}  For $k$ be a field we have $\T_2(k)=\GA_2(k)$.
\end{theo}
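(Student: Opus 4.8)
The plan is to prove this by the classical degree-reduction argument (an alternative would be to exhibit $\GA_2(k)$ as an amalgamated free product of $\GL_2(k)$ with the de~Jonqui\`eres subgroup, but degree reduction is more direct). One proceeds by induction on $N(\varphi):=\deg F+\deg G$, showing that every $\varphi=(F,G)\in\GA_2(k)$ lies in $\T_2(k)$. Since neither component of an automorphism can be constant, $\deg F,\deg G\ge 1$, so the base case is $N(\varphi)=2$, i.e.\ $\deg F=\deg G=1$; then $\varphi$ is an invertible linear map followed by a translation, and as a translation is a product of two elementary automorphisms (each adding a constant), $\varphi\in\T_2(k)$.

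For the inductive step assume $N(\varphi)\ge 3$. First recall the standard fact that $|J\varphi|\in k^{*}$: differentiating $\varphi\circ\varphi^{-1}=\id$ and taking determinants shows $|J\varphi|$ is a unit of $k[X_1,X_2]$, hence a nonzero scalar. After composing with the linear swap $(X_2,X_1)$ if necessary, we may assume $d:=\deg F\ge e:=\deg G$, and then $d\ge 2$. The key input is the \emph{Jung--van der Kulk reduction lemma}: because $\varphi$ is not affine, $e\mid d$ and $F_d=c\,G_e^{\,d/e}$ for some $c\in k^{*}$, where $F_d,G_e$ denote the leading (top-degree homogeneous) forms of $F,G$. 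Granting this, put $\epsilon:=e_1(-c\,X_2^{\,d/e})\in\EA_2(k)$; then $\epsilon\circ\varphi=(F-c\,G^{d/e},\,G)$, and since $c\,G^{d/e}$ has degree $d$ with leading form $c\,G_e^{\,d/e}=F_d$, the first component drops in degree. Hence $N(\epsilon\circ\varphi)<N(\varphi)$, so $\epsilon\circ\varphi\in\T_2(k)$ by induction, and therefore $\varphi=\epsilon^{-1}\circ(\epsilon\circ\varphi)\in\T_2(k)$.

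What remains is the reduction lemma, which is the real content. Comparing homogeneous parts of degree $d+e-2$ on both sides of $|J\varphi|\in k^{*}$ gives $|J(F_d,G_e)|=0$: by the Leibniz expansion the degree-$(d+e-2)$ component of $|J(F,G)|$ is exactly $|J(F_d,G_e)|$, and $d+e-2\ge 1$. For forms $P,Q$ in two variables over a field of characteristic $0$, $|J(P,Q)|=0$ forces $P$ and $Q$ to be scalar multiples of powers $H^{\deg P/g}$ and $H^{\deg Q/g}$ of a common form $H$ of degree $g=\gcd(\deg P,\deg Q)$ — one dehomogenizes, reads $|J(P,Q)|=0$ as a first-order differential relation forcing $P$ and $Q$ to have the same roots in matching proportions, then reads off factorizations in the UFD $k[X_1,X_2]$. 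Thus $F_d=aH^{m}$, $G_e=bH^{n}$ with $\gcd(m,n)=1$ and $m\ge n\ge 1$; one must then show $n=1$ (equivalently $e=g\mid d$), after which $F_d=c\,G_e^{\,d/e}$ follows by comparing factorizations again. This last divisibility is precisely where the \emph{automorphism} property is used, not merely the Jacobian condition: since $k[F,G]=k[X_1,X_2]$, the affine curve $\{G=0\}$ is isomorphic to $\mathbb{A}^{1}$ and so has a single place at infinity, which forces $G_e$ (hence $H$, hence $F_d$) to be a power of a single linear form, and a closer analysis of that one branch at infinity yields $e\mid d$; this is the geometric heart of Jung's original argument. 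I expect this divisibility step to be the main obstacle. A second subtlety is the passage to characteristic $p$ (Van der Kulk's refinement): there $\partial/\partial X_i$ can annihilate $p$-th powers, so both the implication ``$|J(F_d,G_e)|=0\Rightarrow$ common form'' and the count of places at infinity must be replaced by more delicate, combinatorial reasoning about the Newton polygons of $F$ and $G$.
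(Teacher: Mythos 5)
The paper contains no proof of this statement to compare against: Theorem \ref{JvdK} is quoted as the classical Jung--Van der Kulk theorem, with the proof delegated entirely to the references \cite{Jung} and \cite{vdK}. So your proposal has to be judged on its own, and while its skeleton is the correct standard strategy (induction on $\deg F+\deg G$, the observation that the degree-$(d+e-2)$ part of $|J\varphi|$ is $|J(F_d,G_e)|=0$, and the elementary reduction by $\epsilon=e_1(-cX_2^{d/e})$ once one knows $F_d=cG_e^{d/e}$), the decisive content is not established, and you acknowledge this yourself.

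Concretely, two gaps remain. First, the implication ``$|J(F_d,G_e)|=0$ implies $F_d$ and $G_e$ are scalar multiples of powers of a common form'' is only gestured at, and without further input it is false in characteristic $p$ (take $F_d$ a $p$-th power, so that both partials of $F_d$ vanish identically); since the theorem is asserted for an arbitrary field, your argument as written covers at best characteristic $0$. Second, the divisibility $e\mid d$ with $F_d=cG_e^{d/e}$ --- the actual heart of the theorem --- is deferred to an appeal to ``one place at infinity'' for the curve $G=0$. That step needs the full strength of $k[F,G]=k[X_1,X_2]$ and a genuine analysis of the branch at infinity (Jung's argument in characteristic $0$), and in positive characteristic the one-place-at-infinity/Abhyankar--Moh style reasoning you invoke is precisely what breaks down, which is why Van der Kulk's proof is a separate, more combinatorial argument (and why modern treatments often use Newton polygons or the amalgamated-product structure instead). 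Until these two steps are supplied, what you have is a correct plan for the characteristic-$0$ case rather than a proof of the stated theorem.
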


This rest of this section will present some technical tools needed in the proofs of the main results.  Some of these are of intrinsic interest, but others may seem unmotivated until one sees their application.  Hence the reader may prefer to read them as they are encountered in \S\ref{mainresults}.

Throughout this section $R$ will denote a (commutative) ring. 

The statement of the following lemma appears in \cite{vdE}, \S~5.2, as Exercise~7.

\begin{lemma} \label{linearcomb}
Let $R$ be a $\Q$-algebra and $X$ and $Y$ two variables. Then every
monomial $X^nY^m$ in the polynomial ring $R^{[2]}=R[X,Y]$ can be written as
a $\Q$-linear combination of polynomials of the form $(X+aY)^{n+m}$,
with $a \in \Q$.
\end{lemma}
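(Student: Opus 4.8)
The plan is to fix the total degree $d=n+m$ and work inside the finite free $R$-module $V_d$ of homogeneous polynomials of degree $d$ in $X,Y$, which has basis $X^d,X^{d-1}Y,\ldots,Y^d$. Every polynomial $(X+aY)^d$ lies in $V_d$, so it suffices to show that suitably many of them span $V_d$ over $\Q$; then in particular the monomial $X^nY^m$ will be expressible as a $\Q$-linear combination of such polynomials.

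First I would pick $d+1$ pairwise distinct rationals $a_0,\ldots,a_d$ and expand by the binomial theorem,
\[
(X+a_jY)^d=\sum_{k=0}^d\binom{d}{k}a_j^k\,X^{d-k}Y^k\qquad(j=0,\ldots,d),
\]
so that the transition from the basis $(X^{d-k}Y^k)_{k=0}^d$ to the family $\bigl((X+a_jY)^d\bigr)_{j=0}^d$ is effected by the $(d+1)\times(d+1)$ matrix $M=\bigl(\binom{d}{k}a_j^k\bigr)_{j,k}$. This matrix factors as $M=VD$, where $V=(a_j^k)_{j,k}$ is the Vandermonde matrix of the $a_j$ and $D$ is diagonal with entries $\binom{d}{k}$.

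The crux is that $M$ is invertible and that $M^{-1}$ has rational entries: the Vandermonde determinant $\prod_{i<j}(a_j-a_i)$ is a nonzero rational since the $a_j$ are distinct, and each $\binom{d}{k}$ is a nonzero integer, hence a unit in $\Q$ (and therefore in the $\Q$-algebra $R$). Multiplying the relations above by $M^{-1}$ over $\Q$ then expresses each $X^{d-k}Y^k$, and in particular $X^nY^m$ (taking $k=m$), as a $\Q$-linear combination of the polynomials $(X+a_jY)^d=(X+a_jY)^{n+m}$, which is exactly the assertion. I do not expect a genuine obstacle here; the only point demanding care is that the entire linear-algebra argument remains over $\Q$, which is guaranteed by the distinctness of the $a_j$ together with the inclusion $\Q\subseteq R$.
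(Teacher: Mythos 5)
Your proof is correct and is essentially the paper's own argument: both expand $(X+aY)^{n+m}$ via the binomial theorem at $n+m+1$ distinct rational values of $a$ (the paper uses $a=0,1,\dots,n+m$), and both invert a Vandermonde matrix over $\Q$ together with the binomial coefficients to recover the monomials. The only cosmetic difference is that you factor the transition matrix as Vandermonde times a diagonal of binomial coefficients, while the paper absorbs the binomial coefficients into the target vector.
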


\begin{proof}
For every $k \in \{0,\ldots\!,n+m\}$, we have the identity
$$
(X+kY)^{n+m}=\sum_{i=0}^{n+m}
\tbinom{n+m}{i}k^iX^{n+m-i}Y^i
$$
Now define vectors $v,w \in R[X,Y]^{n+m+1}$ by
\begin{align}v&=\left(X^{n+m},(X+Y)^{n+m},(X+2Y)^{n+m},\ldots\!,(X+(n+m)Y)^{n+m}\right)\notag\\
w&=\left(\tbinom{n+m}{0}X^{n+m}, \tbinom{n+m}{1}X^{n+m-1}Y,\ldots\!,
\tbinom{n+m}{n+m-1}XY^{n+m-1},\tbinom{n+m}{n+m}Y^{n+m}\right)\notag
\end{align}
Then $v=(\mathcal A\cdot w^\text{t})^\text{t}$, where the square matrix $\mathcal A=(a_{ij})$ is given
by $a_{ij}=(i-1)^{j-1}$. Hence, $\mathcal A$ is a Vandermonde matrix, which
implies that its determinant is an element of $\Q^*$. The inverse of
$\mathcal A$, together with the inverse of $\tbinom{n+m}{n}$, now
give the desired expression for $X^nY^m$.
\end{proof}

\noindent The following lemma is in the spirit of \cite{EMV}.  Here and in the subsequent lemmas $X$ represents a system of variables $\X$.

\begin{lemma} \label{sumcomp} Let $\ideaala\subset R$ be an ideal such that $\ideaala^2=(0)$. Suppose $G,H \in \ideaala[X]^n$, and define $\phi,\gamma\in\GA_n(R)$ by $\phi=X+G$,  $\gamma=X+H$ (note, that $\phi$ and $\gamma$ are indeed invertible: $\phi^{-1}=X-G$, and $\gamma^{-1}=X-H$). Then $\phi\gamma=X+G+H$.
\end{lemma}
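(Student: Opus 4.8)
The plan is to compute the product $\phi\gamma$ directly from the substitution rule for composition of polynomial maps, and then to discard the resulting cross terms using the hypothesis $\ideaala^2=(0)$.

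First I would recall that for $\phi,\gamma\in\GA_n(R)$ the $i$-th coordinate of $\phi\gamma=\phi\circ\gamma$ is obtained by substituting the coordinates of $\gamma$ for the variables $\X$ in the $i$-th coordinate of $\phi$. Since $\phi=X+G$ has $i$-th coordinate $X_i+G_i$, the $i$-th coordinate of $\phi\gamma$ is $\gamma_i+G_i(\gamma_1,\ldots,\gamma_n)=(X_i+H_i)+G_i(X+H)$, where $G_i(X+H)$ denotes the polynomial obtained from $G_i$ by replacing each $X_j$ by $X_j+H_j$. (One checks that with either convention for the direction of composition the final answer below is the same, so this choice is harmless.)

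The only step that genuinely requires an argument is the identity $G_i(X+H)=G_i$ in $R^{[n]}$. Writing $G_i=\sum_\beta c_\beta X^\beta$ with every coefficient $c_\beta$ lying in $\ideaala$, one expands $G_i(X+H)=\sum_\beta c_\beta\prod_j(X_j+H_j)^{\beta_j}$; multiplying out, each product $\prod_j(X_j+H_j)^{\beta_j}$ equals $X^\beta$ plus a sum of monomials, every one of which carries at least one factor among the $H_j$. Since the coefficients of each $H_j$ lie in $\ideaala$ and $c_\beta\in\ideaala$, each such extra monomial has all of its coefficients in $\ideaala^2=(0)$ and hence vanishes, so $G_i(X+H)=\sum_\beta c_\beta X^\beta=G_i$.

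Combining the two steps then gives that the $i$-th coordinate of $\phi\gamma$ is $X_i+H_i+G_i$, whence $\phi\gamma=X+G+H$, as asserted. (Specializing $H$ to $-G$ recovers the already-stated formula $\phi^{-1}=X-G$, and the symmetric computation shows $\gamma\phi=X+G+H$ as well, so $\phi$ and $\gamma$ in fact commute.) I do not anticipate a real obstacle here; the one point to handle with care is to perform the $\ideaala^2=(0)$ cancellation coefficient-by-coefficient, rather than via a Taylor-expansion identity, which would implicitly require dividing by integers and would be unavailable in positive or mixed characteristic.
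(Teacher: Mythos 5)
Your proof is correct, and since the paper disposes of this lemma with the single word ``Straightforward,'' your coefficient-by-coefficient expansion using $\mathfrak{a}^2=(0)$ is exactly the intended argument written out in full. The remark about avoiding a Taylor-expansion shortcut is a sensible precaution, though unnecessary here since the direct expansion you give works over any ring.
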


\begin{proof}
Straightforward.
\end{proof}

\begin{lemma} \label{monomial}
Let  $a \in R$ with $a^2=0$. Let $m\in \bN^*$. Then $\omega=(\,X+aX^m\,,\,(1-maX^{m-1})Z\,)$ lies in $\EA_2(R)$.
\end{lemma}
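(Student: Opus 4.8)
The plan is to write $\omega$ as an explicit product of four elementary automorphisms of $R[X,Z]$ and then verify the identity by direct substitution, the whole point being that $a^2=0$ truncates every binomial expansion after its term linear in $a$. Write $e_1(f)=(X+f,Z)$ for $f\in R[Z]$ and $e_2(g)=(X,Z+g)$ for $g\in R[X]$, as in (\ref{e}), and compose automorphisms as polynomial self-maps of the plane, so that in $\varphi_1\circ\cdots\circ\varphi_k$ the rightmost factor acts first (consistent with (\ref{conjugate})). The claim is
\[
\omega \;=\; e_2\!\left(maX^{2m-1}-X^m\right)\circ e_1(aZ)\circ e_2(X^m)\circ e_1(-aZ),
\]
and the proof consists of checking this.

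To verify the identity I would push a generic point $(X,Z)$ through the four factors from right to left. The map $e_1(-aZ)$ sends $(X,Z)$ to $(X-aZ,Z)$; then $e_2(X^m)$ sends this to $\bigl(X-aZ,\;Z+X^m-maX^{m-1}Z\bigr)$, since $(X-aZ)^m=X^m-maX^{m-1}Z$ (every later binomial term carries a factor $a^2=0$); then $e_1(aZ)$ replaces the first coordinate by $(X-aZ)+a\bigl(Z+X^m-maX^{m-1}Z\bigr)=X+aX^m$, yielding $\bigl(X+aX^m,\;Z+X^m-maX^{m-1}Z\bigr)$. Finally $e_2(maX^{2m-1}-X^m)$ adds $ma(X+aX^m)^{2m-1}-(X+aX^m)^m$ to the second coordinate; modulo $a^2$ one has $(X+aX^m)^m=X^m+maX^{2m-1}$ and $ma(X+aX^m)^{2m-1}=maX^{2m-1}$, so this increment equals $-X^m$ and the second coordinate becomes $(1-maX^{m-1})Z$. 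This is exactly $\omega$, and since the four factors are elementary, $\omega\in\EA_2(R)$.

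The only non-routine ingredient is finding the corrector $maX^{2m-1}-X^m$ in the outermost factor; the rest is forced. I would pin it down by observing that the inner conjugate $e_1(aZ)\circ e_2(X^m)\circ e_1(-aZ)$ already delivers the correct first coordinate $X+aX^m$ but leaves a spurious $X^m$ in the second coordinate, and then solving $c(X+aX^m)\equiv -X^m\pmod{a^2}$ for $c\in R[X]$. I expect no genuine obstacle beyond this, because $a^2=0$ makes every computation linear in $a$; alternatively one could organize the bookkeeping through Lemma \ref{sumcomp}, adding up the $(a)$-small contributions rather than substituting by hand. One caveat worth recording: one cannot instead split $\omega=(X+aX^m,Z)\circ(X,(1-maX^{m-1})Z)$ using Lemma \ref{sumcomp}, since neither factor has Jacobian determinant $1$ and hence neither lies in $\EA_2$; the perturbation of $X$ and the scaling of $Z$ must be treated as a single package.
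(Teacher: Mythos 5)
Your factorization is correct and is essentially the paper's own argument: the paper writes $\omega=\alpha\beta\alpha^{-1}\beta^{-1}\gamma$ with $\alpha=e_1(-aZ)$, $\beta=e_2(-X^m)$, $\gamma=e_2\bigl((X+aX^m)^m-X^m\bigr)$, which is exactly your product of four elementaries up to regrouping (your corrector $e_2(maX^{2m-1}-X^m)$ absorbs the $\beta^{-1}$ factor, since $(X+aX^m)^m-X^m=maX^{2m-1}$ modulo $a^2$). Both proofs rest on the same observation that $a^2=0$ truncates the binomial expansions, so no further comment is needed.
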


\begin{proof}
Define $\alpha,\beta,\gamma \in \EA_2(R)$ by $\alpha=(X-aZ,Z)$, $\beta=(X,Z-X^m)$, and $\gamma=(X,Z+(X+aX^m)^m-X^m)$.  Then $\omega=\alpha\beta\alpha^{-1}\beta^{-1}\gamma$.
\end{proof}

The following will be used in the proof of Theorem \ref{modnil}.

\begin{propo} \label{theprop}
Let $\ideaala \subseteq R$ an ideal such that $\ideaala^2=(0)$.  Suppose $\phi \in \GA_n(R)$ has the form $\phi=X+H$, where $H=(H_1,\ldots,H_n) \in \ideaala[X]^n$. 
\begin{enumerate}
\item[(1)] Let $d=|J\phi|$. Letting $Z$ be a single new variable, we have
$$
(\,X+H,d^{-1}Z\,) \in \EA_{n+1}(R).
$$
Consequently, if $|J\phi| \in R^*$, then $\phi^{[1]}$ is a tame
automorphism.
\item[(2)] Suppose $|J\phi|=1$ and $R$ is a $\Q$-algebra.  Then $\phi\in\EA_n(R)$.
\end{enumerate}
\end{propo}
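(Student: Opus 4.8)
The plan is to exploit the hypothesis $\ideaala^2=(0)$, which by Lemma \ref{sumcomp} linearizes composition: any two automorphisms of the form $X+G$, $X+H$ with $G,H\in\ideaala[X]^n$ multiply by adding the tails, $\phi\gamma=X+G+H$. Thus the set of such automorphisms forms an abelian group isomorphic to the additive group $\ideaala[X]^n$, and the whole problem becomes additive. For part (1), I would first write $H=\sum_\mu c_\mu X^\mu$ as a finite sum where each $c_\mu\in\ideaala^n$ and $X^\mu$ is a monomial; by the additivity just noted, $X+H$ is a product over $\mu$ of automorphisms $X+c_\mu X^\mu$, so it suffices to handle each monomial tail separately and then track what happens to the Jacobian. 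For a single such factor $\psi=X+c X^\mu$ one computes, again using $\ideaala^2=(0)$, that $|J\psi|=1+(\partial\text{-terms})$ is $1$ plus a single element of $\ideaala$; and the product of these partial Jacobians over all $\mu$ multiplies out (mod $\ideaala^2$) to $d=|J\phi|$. So the task reduces to: for each monomial term, produce an element of $\EA_{n+1}$ of the form $(X+cX^\mu,\ e Z)$ where $e\in R$ accounts for that term's contribution to $d^{-1}$, and then compose. Here Lemma \ref{monomial} is the engine — it gives exactly such an elementary realization of $(X+aX^m,(1-maX^{m-1})Z)$ in dimension two, and one expects an analogous multivariable statement (handling a mixed monomial $c X_1^{a_1}\cdots X_n^{a_n}$ in the $i$-th coordinate, with the complementary factor on $Z$) to be provable by the same commutator trick $\alpha\beta\alpha^{-1}\beta^{-1}\gamma$. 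Multiplying these together in $\EA_{n+1}(R)$, the $Z$-coordinates accumulate to $d^{-1}Z$ (modulo $\ideaala^2$, $\prod(1+a_\mu)\equiv 1+\sum a_\mu\equiv d$, so the complementary product is $d^{-1}$), giving $(X+H,d^{-1}Z)\in\EA_{n+1}(R)$. The consequence is immediate: if $d\in R^*$ then $(X,d Z)\in\GL_{n+1}(R)\subseteq\T_{n+1}(R)$, so $\phi^{[1]}=(X+H,Z)=(X+H,d^{-1}Z)\circ(X,dZ)$ is tame.

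For part (2), with $|J\phi|=1$ and $R$ a $\Q$-algebra, the goal is to remove the extra variable. The difference from (1) is that now $d=1$, so each monomial factor already has trivial Jacobian effect in the sense that the $Z$-complements multiply to $1$; the remaining issue is that Lemma \ref{monomial} produces the monomial elementary only at the cost of a nontrivial $Z$-coordinate $(1-maX^{m-1})Z$, and we must see that these can be cancelled among themselves without ever leaving $\EA_n$. The $\Q$-algebra hypothesis enters through Lemma \ref{linearcomb}: every monomial $X^\mu$ can be written as a $\Q$-linear combination of pure powers $(\ell\cdot X)^{|\mu|}$ of linear forms with rational coefficients (the multivariable analogue of Lemma \ref{linearcomb}, or its iterated one-variable-at-a-time version). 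For a linear form $\ell$, the automorphism $X+c\,\ell^{k}\delta_i$ (with $c\in\ideaala^n$) is, after a linear change of coordinates in $\GL_n(R)$ sending $\ell$ to a coordinate $X_j$ (available since $\ell$ has a unit, in fact $\Q$-rational, coefficient — here one uses that such a linear form can be completed to a basis over a $\Q$-algebra, or more simply one reduces to $\ell=X_j$ by conjugating), elementary: it is just $e_i(c X_j^k)$. Conjugating back, $X+c\ell^k\delta_i\in\EA_n(R)$ (note $\GL_n$-conjugates of $\EA_n$ lie in $\EA_n$ only after absorbing the linear parts — more carefully, a conjugate of an elementary by a linear automorphism is a product of elementaries and linears with trivial total linear part, hence in $\EA_n$; or one uses the standard fact that $\langle\GL_n,\text{elementary in one fixed position}\rangle\cap\SA_n$ behaves well). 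Since $\ideaala^2=(0)$, the $\Q$-linear combination from Lemma \ref{linearcomb} turns back into a composition: $X+cX^\mu$ is the product of finitely many $X+(q_\ell c)\ell^{|\mu|}\delta_i$ with $q_\ell\in\Q$, each in $\EA_n(R)$, hence $X+cX^\mu\in\EA_n(R)$. Finally, $\phi=X+H$ is the product over its monomial terms of such factors, so $\phi\in\EA_n(R)$.

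The main obstacle I anticipate is the bookkeeping in part (1): proving the multivariable refinement of Lemma \ref{monomial} (realizing $(X+cX^\mu\delta_i,\,(\text{partial of }cX^\mu)\,Z)$ as a single element of $\EA_{n+1}$) and then verifying that, when the monomial factors are composed, the $Z$-coordinate complements multiply — modulo $\ideaala^2$ — to precisely $d^{-1}Z$ and not merely to something congruent to $1$ plus a stray $\ideaala$-term. Because $\ideaala^2=(0)$ this is a linear (additive) computation rather than a genuinely nonlinear one, so it should go through, but the indexing of mixed partials against the monomials of $H$ requires care. A secondary subtlety in part (2) is justifying that a $\GL_n(R)$-conjugate of an elementary automorphism lies in $\EA_n(R)$ when the conjugating matrix is not itself a product of elementaries; over a general ring $\GL_n\neq\E_n$, so one must argue instead that the conjugate has trivial linear homogeneous part (it is of the form $X+(\text{stuff in }\ideaala)$) and then reapply the monomial machinery, or invoke that the linear form may be taken to be a coordinate after a permutation plus a unipotent (hence elementary) change. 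Either route is standard, but it is the place where the argument is least automatic.
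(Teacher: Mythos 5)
Your part (1) is essentially the paper's argument, and the "multivariable analogue of Lemma \ref{monomial}" you flag as the main obstacle is not actually needed: reduce coordinate by coordinate (the factors $(X_1,\ldots,X_i+H_i,\ldots,X_n,(1-\tfrac{\partial H_i}{\partial X_i})Z)$ compose correctly because $\ideaala^2=(0)$), and then view each factor as a two-variable automorphism in $X_i,Z$ over the ground ring $R[\Xmi]$; a mixed monomial $cX_1^{a_1}\cdots X_n^{a_n}$ in the $i$-th slot is then just $aX_i^{a_i}$ with $a=c\prod_{j\ne i}X_j^{a_j}\in\ideaala[\Xmi]$, $a^2=0$, so Lemma \ref{monomial} applies verbatim with $R[\Xmi]$ in place of $R$. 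Modulo that restriction-of-scalars remark, your bookkeeping ($\prod(1+a_\mu)\equiv 1+\sum a_\mu$ mod $\ideaala^2$) is fine and part (1) goes through as in the paper.

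Part (2), however, has a genuine gap: your argument never really uses $|J\phi|=1$, and the monomial-by-monomial decomposition destroys that hypothesis. An individual factor $X+cX^\mu\delta_i$, or $X+c\ell^{k}\delta_i$ with $\ell$ a linear form involving $X_i$, has Jacobian determinant $1+c\,\partial(X^\mu)/\partial X_i\ne 1$ whenever $X_i$ occurs in the monomial; since $\EA_n(R)\subseteq\SA_n(R)$, such a factor simply cannot lie in $\EA_n(R)$, and no linear conjugation can repair this (conjugation only composes the determinant with a linear substitution). Your conjugation trick works only in the transversal case where $\ell$ does not involve $X_i$, which is not what Lemma \ref{linearcomb} generically produces. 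The paper uses the Jacobian hypothesis structurally and this is the missing idea: over a $\Q$-algebra, $\tfrac{\partial g}{\partial X}+\tfrac{\partial h}{\partial Y}=0$ lets one write $(g,h)=(\tfrac{\partial p}{\partial Y},-\tfrac{\partial p}{\partial X})$ for a potential $p$; one then decomposes $p$ (not $H$) into monomials and applies Lemma \ref{linearcomb} to $p$, obtaining Jacobian-one building blocks $\left(X+kabr(X+aY)^{k-1},\,Y-kbr(X+aY)^{k-1}\right)$, each equal to $\alpha^{-1}\beta\alpha$ with $\alpha=(X+aY,Y)$ \emph{elementary} (not merely linear), hence in $\EA_2(R)$; the general case reduces to $n=2$ by integrating each $H_i$ ($i<n$) with respect to $X_n$ and using $|J|=1$ once more to see the residual last-coordinate map is elementary. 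A secondary flaw in your write-up is the claim that a $\GL_n(R)$-conjugate of an elementary automorphism with trivial linear part lies in $\EA_n(R)$: this is unproved and essentially circular (the paper only obtains such conjugation statements stably, via the commutator formulas); it is patchable here because your $\ell$ has $\Q$-coefficients, so the conjugator can be taken in $\GE_n(\Q)$, which does normalize $\EA_n(R)$ — but the Jacobian obstruction above is not patchable without the potential-function step.
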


\begin{proof}
For (1), first note that $d=(1+\frac{\pa H_1}{\pa X_1})\cdots(1+\frac{\pa H_n}{\pa X_n})$,
so $d^{-1}=(1-\frac{\pa H_1}{\pa X_1})\cdots(1-\frac{\pa H_n}{\pa X_n})$ and
\begin{align}
(\,X+H,d^{-1}Z\,)=&\left(\,X_1+H_1,X_2,\ldots,X_n,(1-\frac{\pa H_1}{\pa X_1})\,Z\right)\notag\\
&\circ\left(\,X_1,X_2+H_2,X_3,\ldots,X_n,(1-\frac{\pa H_2}{\pa X_2})\,Z\,\right)\notag\\
&\circ\cdots\circ\left(\,X_1,\ldots,X_{n-1},X_n+H_n,(1-\frac{\pa H_n}{\pa X_n})\,Z\,\right).\notag
\end{align}
Hence, we are reduced to the case $n=1$.  

So now let $X$ represent a single variable.  For any $p(X), q(X) \in \ideaala[X]$,
$$
\left(\,X+p+q,\left(1-\frac{\pa (p+q)}{\pa
X}\right)Z\,\right)=\left(\,X+p,\left(1-\frac{\pa p}{\pa
X}\right)Z\,\right)\circ\left(\,X+q,\left(1-\frac{\pa q}{\pa X}\right)Z\,\right).
$$
This additivity allows us to assume $H$ is a monomial
$aX^m$, where $a \in \ideaala$. But this
case is precisely Lemma~\ref{monomial}.

For the proof of (2), we first, consider the case $n=2$, and for the moment we write $X,Y,g,h$ instead of $X_1,X_2,H_1,H_2$. Since $\ideaala^2=(0)$, $|J(\phi)|=1+\frac{\partial g}{\partial X}+\frac{\partial h}{\partial Y}$. Then $\frac{\partial g}{\partial X}+\frac{\partial h}{\partial Y}=0$, and since $R$ is a $\Q$-algebra, this implies that there exists a polynomial $p \in R[X,Y]$ such that $g=\frac{\partial p}{\partial Y}$ and $h=-\frac{\partial p}{\partial X}$. Using Lemma~\ref{sumcomp}, we may assume that $p=rX^nY^m$ for some $r \in A,\ n,m\ge0$ and $n+m\geq1$. With Lemma~\ref{linearcomb}, we can write $X^nY^m$ as a $\Q$-linear combination of polynomials of the form $(X+aY)^{n+m}$, with $a \in \Q$. Applying Lemma~\ref{sumcomp} again, we may assume that
$$
\phi=\left(X+kabr(X+aY)^{k-1},Y-kbr(X+aY)^{k-1}\right)\,,
$$
where $k\ge1,\ a,b \in \Q$ and $r \in R$. But then $\varphi=\alpha^{-1}\beta\alpha$, where $\alpha=(X+aY,Y)$ and $\beta=(X,Y-kbrX^{k-1})$.  Therefore $\phi\in\EA_2(R)$.

Now we turn to the general $\Q$-algebra case. For
$i=1,\ldots\!,n-1$, choose a polynomial $P_i \in
\ideaala[\X]$ such that $H_i=\frac{\partial
P_i}{\partial X_n}$. If we define $\alpha_i$ by
$$
\alpha_i=\left(X_1,\ldots\!,X_{i-1},X_i-\frac{\partial P_i}{\partial
X_n},X_{i+1},\ldots\!,X_n+\frac{\partial P_i}{\partial X_i}\right)\,,
$$
then, applying extension of scalars and appealing to the case of two variables, it
follows that $\alpha_i \in \EA_n(R)$. Furthermore,
Lemma~\ref{sumcomp} gives
$$
\alpha_1\cdots\alpha_{n-1}\phi=\left(X_1,\ldots\!,X_{n-1},X_n+\frac{\partial
P_1}{\partial X_1}+\cdots+\frac{\partial P_{n-1}}{\partial
X_{n-1}}+H_n\right)\,.
$$
As $|J(\alpha_1\cdots\alpha_{n-1}\phi)|=1$, we must have
$\frac{\partial}{\partial X_n}(\frac{\partial P_1}{\partial
X_1}+\cdots+\frac{\partial P_{n-1}}{\partial X_{n-1}}+H_n)=0$. Hence,
$\frac{\partial P_1}{\partial X_1}+\cdots+\frac{\partial
P_{n-1}}{\partial X_{n-1}}+H_n \in R[X_1,\ldots\!,X_{n-1}]$ and
$\alpha_1\cdots\alpha_{n-1}\phi \in \EA_n(R)$.  This shows $\phi\in\EA_n(R)$.
\end{proof}

The next lemma introduces an important tool: the homomorphism $\Psi_t$.  Here we write $X$ and $Z$ for sets of variables $\X$ and $Z_1,\ldots,Z_n$, and $F=F(X)$ for a vector of polynomials $(\F)\in R[\X]^n$.

\begin{propo}\label{trick} Let $t\in R$ be a non-zero-divisor.  For $\varphi\in\GA_n(R_t)$ write $\varphi$ in the form $\varphi=X+F(X)$.  The map $$X+F(X)\mapsto Z+\frac{1}{t}F(X+tZ)$$ defines a group homomorphism $\Psi_t:\GA_n(R_t)\to\GA_n(R_t[X])$.  Moreover, if $\varphi\in\GA_n(R)$ with $F\in tR[X]^n$, then $\Psi_t(\varphi)$ lies in $\GA_n(R[X])$ and is elementarily equivalent to $\varphi^{[n]}$ in $\GA_{2n}(R)$.
\end{propo}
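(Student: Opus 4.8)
The plan is to realise $\Psi_t$, over the ring $R_t$, as a conjugation composed with stabilization, and then — under the extra hypothesis of the last sentence — to reorganize that conjugation into a genuine two-sided product of elementary automorphisms over $R$. First I would introduce the linear automorphism $\alpha := (X+tZ,\,X)\in\GA_{2n}(R_t)$ (here $X,Z$ are the two blocks of $n$ dimension variables, and $\alpha$ is invertible precisely because $t\in R_t^{*}$, with $\alpha^{-1}=(Z,\,t^{-1}(X-Z))$). A one-line substitution then shows that for $\varphi=X+F(X)\in\GA_n(R_t)$,
$$\alpha^{-1}\circ\varphi^{[n]}\circ\alpha \;=\; \bigl(X,\;Z+t^{-1}F(X+tZ)\bigr),$$
since $\alpha$ sends $(X,Z)$ to $(X+tZ,X)$, then $\varphi^{[n]}$ sends it to $(X+tZ+F(X+tZ),\,X)$, and $\alpha^{-1}$ returns $(X,\,Z+t^{-1}F(X+tZ))$. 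The right side fixes the first $n$ variables, hence lies in $\GA_n(R_t[X])$, and it is exactly $\Psi_t(\varphi)$. Thus $\Psi_t$ is the composite of the stabilization homomorphism $\GA_n(R_t)\to\GA_{2n}(R_t)$ (\ref{stabdef}) with conjugation by $\alpha$ and the restriction-of-scalars identification; being a composite of group homomorphisms it is a well-defined group homomorphism $\GA_n(R_t)\to\GA_n(R_t[X])$. (One could equally check multiplicativity of the formula directly, using $\varphi\psi=X+\bigl(G+F(X+G)\bigr)$ for $\varphi=X+F$, $\psi=X+G$.)

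For the last sentence, write $\varphi=X+F(X)\in\GA_n(R)$ with $F=tG$, $G\in R[X]^n$, so $\Psi_t(\varphi)=(X,\,Z+G(X+tZ))$ has coefficients in $R[X]$. The obstruction is that $\alpha\notin\GA_{2n}(R)$ — its Jacobian determinant is $(-t)^n$, not a unit in $R$ — so conjugation by $\alpha$ is not an operation over $R$. The fix is to replace $\alpha$ by the elementary automorphisms $\mu:=(X+tZ,\,Z)$ and $\epsilon_0:=(X,\,Z+G(X))$, both lying in $\EA_{2n}(R)$ (respectively $\mu$ adds $tZ_i\in R[Z]$ to $X_i$, and $\epsilon_0$ adds $G_i(X)\in R[X]$ to $Z_i$). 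The key claim, again checked by a direct substitution involving no division, is the identity
$$\mu^{-1}\circ\varphi^{[n]}\circ\epsilon_0\circ\mu \;=\; \bigl(X,\;Z+G(X+tZ)\bigr) \;=\; \Psi_t(\varphi);$$
the cancellation works because $\varphi^{[n]}$ (with $\varphi=X+tG$) produces the term $tG(X+tZ)$ in the first block, while $\mu^{-1}$, which subtracts $t$ times the second block, turns the term $G(X+tZ)$ introduced by $\epsilon_0$ into exactly $tG(X+tZ)$, so the two cancel and the first block collapses to $X$. Since this identity holds over $R$, it shows $\Psi_t(\varphi)$ equals $\mu^{-1}\varphi^{[n]}\epsilon_0\mu\in\GA_{2n}(R)$; as it fixes the first $n$ variables it lies in $\GA_n(R[X])$; and it is exhibited as $\epsilon\,\varphi^{[n]}\,\epsilon'$ with $\epsilon:=\mu^{-1}$, $\epsilon':=\epsilon_0\mu$ in $\EA_{2n}(R)$, so $\Psi_t(\varphi)$ is elementarily equivalent to $\varphi^{[n]}$ in $\GA_{2n}(R)$ (\ref{tamequiv}). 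The injectivity of $R\hookrightarrow R_t$ — here is where "$t$ a non-zero-divisor" enters — is what lets one transport this identity back from $\GA_{2n}(R_t)$, where $\Psi_t(\varphi)$ a priori lives, to $\GA_{2n}(R)$.

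The one creative step, and the place where care is needed, is precisely this reorganization. Over $R_t$ one has the clean conjugation $\Psi_t(\varphi)=\alpha^{-1}\varphi^{[n]}\alpha$, but $\alpha$ is not an automorphism over $R$ and conjugacy over the larger ring $R_t$ yields nothing over $R$; the point is that the discrepancy between $\alpha$ and the elementary automorphism $\mu$ (they agree except in the last $n$ slots) can be absorbed into the single elementary automorphism $\epsilon_0$ on the opposite side, and the bookkeeping closes up over $R$ without dividing by $t$ exactly because the hypothesis $F\in tR[X]^n$ supplies the needed factor of $t$. Everything else — the two substitution verifications and the observation that $\mu$ and $\epsilon_0$ are products of elementary automorphisms — is routine.
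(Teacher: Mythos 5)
Your proposal is correct and takes essentially the same route as the paper: the homomorphism property is obtained by exhibiting $\Psi_t(\varphi)$ as a conjugate of the stabilized $\varphi^{[n]}$ over $R_t$ (your conjugator $\alpha$ differs from the paper's $\sigma\eta$ only by the factor $(X,-tZ)$, which commutes with every $\varphi^{[n]}$), and your key identity $\Psi_t(\varphi)=\mu^{-1}\,\varphi^{[n]}\,(\epsilon_0\mu)$ is literally the paper's $\Psi_t(\varphi)=\sigma\,\varphi^{[n]}\,\omega\,\sigma^{-1}$ with $\mu^{-1}=\sigma=(X-tZ,Z)$ and $\epsilon_0=\omega=(X,Z+\tfrac{1}{t}F(X))$. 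So there is nothing to add: the argument is sound and coincides with the paper's proof.
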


\begin{proof} Letting $\eta=(X,Z+(1/t)X),\sigma=(X-tZ,Z)\in\GA_{2n}(R_t)$, a direct computation shows that $\Psi_t(\varphi)=\sigma\eta\varphi^{[n]}\eta^{-1}\sigma^{-1}$.  This shows that $\Psi_t$ is a group homomorphism.  However, we can also write $\Psi_t(\varphi)=\sigma\varphi^{[n]}\omega\sigma^{-1}$ where $\omega=(X,Z+(1/t)F(X))$.  If $F\in tR[X]^n$ then $\omega$ is elementary over $R$, and since $\sigma\in\EA_{2n}(R)$ it follows that $\Psi_t(\varphi)$ is elementarily equivalent over $R$ to $\varphi^{[n]}$.
\end{proof}

\begin{exam}\label{examp} We observe the effect of $\Psi_t$ in two special situations:
\begin{enumerate}
\item Let $\varepsilon=e_i(f)$, where $f(X)\in R_t[\Xmi]$.  Then
\begin{equation}\label{Psielem} \Psi_t(\varepsilon)=e_i\left(\frac{1}{t}f(X+tZ)\right)\,.
\end{equation}
\item Let $\gamma\in\GL_n(R_t)$.  Let $\mathcal I+\mathcal M$ be its matrix representation ($\mathcal I$ being the identity matrix), so that (by the slight abuse of language mentioned earlier) $\gamma=(\mathcal I+\mathcal M)X$.  We then have
\begin{equation}\label{Psilinear} \Psi_t(\gamma)=(\mathcal I+\mathcal M)Z+\frac{1}{t}\mathcal MX = (Z+\frac{1}{t}\mathcal MX)\circ\tilde\gamma
\end{equation}
where $\tilde\gamma=(\mathcal I+\mathcal M)Z$.  Note that $\tilde\gamma\in\GL_n(R_t)$ having the same matrix as $\gamma$, except in the variables $Z$ instead of $X$.
\end{enumerate}
\end{exam}

\begin{lemma}\label{sweepleft}  Let $t$ and $\Psi_t$ be as in Proposition \ref{trick}, and let $\rho\in\EA_n(R)$.  Then there exists $\tilde\rho\in\EA_n(R[X])$ and a translation $\tau\in\Tr_n(R_t[X])$ of the form $\tau=Z+(1/t)p(X)$, with $p(X)\in R[X]^n$, such that $\Psi_t(\rho)=\tau\tilde\rho$.
\end{lemma}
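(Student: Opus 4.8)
The plan is to induct on the number of elementary factors of $\rho$, using that $\Psi_t$ is a group homomorphism (Proposition~\ref{trick}) and that $\EA_n(R)$ is generated by elementary automorphisms. Thus it suffices to treat the case where $\rho$ is a single elementary, and then to show that whenever $\Psi_t(\rho')=\tau'\tilde\rho'$ with $\tau'=Z+\tfrac1t p'(X)$, $p'\in R[X]^n$, and $\tilde\rho'\in\EA_n(R[X])$, the analogous factorization holds for $\Psi_t(\varepsilon\rho')$ with $\varepsilon$ a single elementary over $R$. The one thing to keep under control at every step is \emph{denominators}: the ``elementary part'' must land in $\EA_n(R[X])$ and not merely in $\EA_n(R_t[X])$, and every translation that appears must be of the precise form $Z+\tfrac1t q(X)$ with $q\in R[X]^n$. (Recall $t$ is a non-zero-divisor, so division by $t$ in $R[X][Z]$ is unambiguous.)

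For the base case, take $\varepsilon=e_i(f)$ with $f\in R[\Xmi]$; then formula~(\ref{Psielem}) gives $\Psi_t(\varepsilon)=e_i\!\big(\tfrac1t f(X+tZ)\big)$. I would write $\tfrac1t f(X+tZ)=\tfrac1t f(X)+h$ with $h:=\tfrac1t\big(f(X+tZ)-f(X)\big)$. In $f(X+tZ)$ the coefficient of each monomial $Z^\gamma$ with $\gamma\neq0$ carries a factor $t^{|\gamma|}$, so $f(X+tZ)-f(X)$ is divisible by $t$ in $R[X][Z]$, and the quotient $h$ involves no $Z_i$ (since $f$ involves no $X_i$); hence $e_i(h)\in\EA_n(R[X])$. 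As $e_i$ is additive in its argument, $\Psi_t(\varepsilon)=\big(Z+\tfrac1t f(X)\delta_i\big)\circ e_i(h)$, and $Z+\tfrac1t f(X)\delta_i$ is a translation of the desired form (its shift vector is $\tfrac1t$ times $f(X)\delta_i\in R[X]^n$).

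For the inductive step, write $\Psi_t(\varepsilon)=\tau_\varepsilon\tilde\rho_\varepsilon$ as above (so $\tilde\rho_\varepsilon=e_i(h)$), giving $\Psi_t(\varepsilon\rho')=\tau_\varepsilon\,\tilde\rho_\varepsilon\,\tau'\,\tilde\rho'$; the task is to sweep $\tau'$ leftward past $\tilde\rho_\varepsilon$. Conjugation of an elementary by a translation yields $(\tau')^{-1}e_i(h)\tau'=e_i\!\big(h(X,\,Z+\tfrac1t p'(X))\big)$, and here I would invoke the identity
$$h\big(X,\,Z+\tfrac1t p'(X)\big)=\tfrac1t\Big(f\big(X+p'(X)+tZ\big)-f\big(X+p'(X)\big)\Big)+\tfrac1t\Big(f\big(X+p'(X)\big)-f(X)\Big),$$
which holds precisely because $t\cdot\tfrac1t p'(X)=p'(X)$ carries no denominator and so may be absorbed into the argument of $f$ before the ``increment'' is regrouped. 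By the reasoning of the base case, the first summand $h':=\tfrac1t\big(f(X+p'(X)+tZ)-f(X+p'(X))\big)$ lies in $R[X][Z]$ and involves no $Z_i$, whence $e_i(h')\in\EA_n(R[X])$; the second summand involves $X$ alone, so $\tau'':=Z+\tfrac1t\big(f(X+p'(X))-f(X)\big)\delta_i$ is a translation of the desired form. Additivity of $e_i$ then gives $(\tau')^{-1}\tilde\rho_\varepsilon\tau'=e_i(h')\circ\tau''$.

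What makes the induction close in a single step rather than spawn an infinite cascade of translations is that $\tau''$ displaces only the $i$-th coordinate, while $e_i(h')$ changes only $Z_i$ and does so by a quantity ($h'$) free of $Z_i$; hence $\tau''$ and $e_i(h')$ commute. Therefore
$$\Psi_t(\varepsilon\rho')=\tau_\varepsilon\,\tau'\,e_i(h')\,\tau''\,\tilde\rho'=\tau_\varepsilon\,\tau'\,\tau''\,e_i(h')\,\tilde\rho'=\big(\tau_\varepsilon\tau'\tau''\big)\big(e_i(h')\,\tilde\rho'\big),$$
and since translations of the form $Z+\tfrac1t q(X)$ with $q\in R[X]^n$ are closed under composition, $\tau:=\tau_\varepsilon\tau'\tau''$ is such a translation while $\tilde\rho:=e_i(h')\,\tilde\rho'\in\EA_n(R[X])$, which is the required factorization. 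I expect the only genuine obstacle to be this denominator bookkeeping: it is the key identity together with the coordinate-$i$ commutation that keeps the elementary part over $R[X]$ and prevents the freshly produced translation $\tau''$ from re-entering the ``elementary'' slot; everything else is routine manipulation with elementary automorphisms and translations.
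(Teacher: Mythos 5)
Your proof is correct and follows essentially the same route as the paper: induction on the number of elementary factors, with the Taylor-type splitting $\tfrac1t f(X+p'+tZ)=\tfrac1t\bigl(f(X+p'+tZ)-f(X+p')\bigr)+\tfrac1t f(X+p')$ isolating an elementary part over $R[X]$ and a translation $Z+\tfrac1t q(X)$, exactly as in the paper's computation of $\Psi_t(\rho_1)\tau'$. Your extra step of first factoring $\Psi_t(\varepsilon)$ and then conjugating and commuting is only a cosmetic reorganization of the same calculation, and your denominator bookkeeping is sound.
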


\begin{proof}  Write $\rho=\rho_1\cdots\rho_s$ with each $\rho_i$ being elementary.  Then $\Psi_t(\rho)=\Psi_t(\rho_1)\Psi_t(\rho_2\cdots\rho_s)$ and by induction on $s$ we have $\Psi_t(\rho_2\cdots\rho_s)=\tau'\tilde\rho'$ of the required form, taking $\tau'$ and $\tilde\rho'$ to be the identity if $s=1$.  Write 
$\rho_1=e_i(r(X))$,   where $r(X)\in R[\Xmi]$, so that, according to (\ref{Psielem}),
$$\Psi_t(\rho_1)=e_i\left(\frac{1}{t}r(X+tZ)\right)\in\EA_n(R_t[X])\,.$$
Also write $\tau'=Z+(1/t)q(X)$ with $q(X)\in R[X]^n$.  Using the $\delta$ notation introduced in \ref{delta}, we have
$\Psi_t(\rho_1)\tau'=Z+(1/t)(q(X)+r(X+q(X)+tZ)\delta_i)$.
By Taylor's expansion we can write 
$$\frac{1}{t}r(X+q(X)+tZ)=\frac{1}{t}r(X+q(X))+\tilde r(X,Z)\,,$$ 
with  $\tilde r(X,Z)\in R[X][\Zmi]$.  Then we have $\Psi_t(\rho_1)\tau'=\tau\tilde\rho_1$, where $\tau=Z+(1/t)(q(X)+r(X+q(X))\delta_i)$ and $\tilde\rho_1=e_i(\tilde r(X,Z))$.  Note that $\tau$ has the form specified by the lemma, and that $\tilde\rho_1$ is elementary over $R[X]$.  Setting $\tilde\rho=\tilde\rho_1\tilde\rho'$, we have $\Psi_t(\rho)=\tau\tilde\rho$ as desired.
\end{proof}

\begin{defi} \label{order} 
Let $t\in R$ be a non-zero-divisor.  For $a\in R_t$ we define the {\it$t$-order} of $a$ to be the smallest integer $n\ge0$ such that $t^na\in R$.  Note that the $t$-order of $a$ is zero if and only if $a\in R$.  If $\gamma$ is a matrix or vector over $R_t$ we define the $t$-order of $\gamma$ to be the maximum of the $t$-orders of its entries.
\end{defi}

\begin{rema} This definition of order might be the negative of what the reader expects.  Note that it resembles the order of a pole rather than a zero.  Also the insistence that $n$ be non-negative does not coincide with typical order functions.  However this definition will serve us well in this paper.
\end{rema}

\begin{lemma}\label{elemsweep} Let $t\in R$ be a non-zero-divisor. Let $u\in R_t^n$, $x\in R^n$, $f(X)\in R_t[\Xmi]$.  Let $T$ be an indeterminate and define $\epsilon,\sigma\in\GA_n(R_t[T,T^{-1}])$ by 
$$\epsilon=e_i\left(\frac{1}{T}f(x+TX)\right)\,,\qquad\sigma=X+\frac{1}{T}u\,.$$
Then there exist $w\in R_t^n$, $\omega\in\E_n(R_t)$, and $g(T,X)\in R_t[T][\Xmi]$ such that, letting 
$$\nu=X+\frac{1}{T}w\,,\qquad\xi=e_i(Tg(T,X))\,,$$
we have $\epsilon\sigma=\nu\omega\xi$.  Moreover the $t$-orders of $\omega$, $w$, and $g$ are $\le m$ where $m$ is a number depending only on the degree of $f$ and the $t$-orders of $f$ and $u$.\footnote{This sentence is not quite precise.  Literally we mean, more strongly, that given integers $d,r,s\ge0$ there exists an integer $m=m(d,r,s)\ge0$ such that given \underbar{any} $u$ and $f$ as in the lemma with $\deg(f)\le d$, $f$ having $t$-order $\le r$ and $u$ having $t$-order $\le s$, then the resulting $\omega$, $w$, and $g$ will have $t$-order $\le m$.  This abuse will be repeated in Lemma \ref{shorten}.}
\end{lemma}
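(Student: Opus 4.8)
The plan is to compute $\epsilon\sigma$ by hand, expand it in powers of $T$ by a division-free form of Taylor's formula, and then read the pieces $\nu,\omega,\xi$ directly off that expansion. First I would substitute $\sigma=X+\tfrac1T u$ into $\epsilon=e_i\!\left(\tfrac1T f(x+TX)\right)$. Using the composition convention pinned down by (\ref{conjugate}) — the vector of $\alpha\beta$ is obtained by substituting the vector of $\beta$ into that of $\alpha$ — and noting $T\sigma=TX+u$, one gets $\epsilon\sigma=X+\tfrac1T u+\tfrac1T f(x+u+TX)\,\delta_i$. Now write $y=x+u\in R_t^n$ and expand $f(y+TX)$ by applying the binomial theorem to each factor $(y_j+TX_j)^{\beta_j}$ ($j\ne i$) of each monomial $c_\beta X^\beta$ of $f$; collecting powers of $T$ gives
$$
f(y+TX)=f(y)+T\sum_{j\ne i}(\partial_j f)(y)\,X_j+T^2 h(T,X),
$$
where $h\in R_t[T][\Xmi]$ and every coefficient of $h$ is a $\bZ$-linear combination of products of one coefficient $c_\beta$ of $f$ with a monomial in the $y_j$ of degree $\le\deg f$. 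Importantly, no integer is inverted at any point.

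Dividing through by $T$ and comparing with the formula for $\epsilon\sigma$ suggests taking $w=u+f(y)\,\delta_i\in R_t^n$, $\omega=\prod_{j\ne i}e_{ij}\bigl((\partial_j f)(y)\bigr)$ — a product of elementary matrices, hence an element of $\E_n(R_t)$, with vector $X+\sum_{j\ne i}(\partial_j f)(y)X_j\,\delta_i$ — and $g=h\in R_t[T][\Xmi]$. With $\nu=X+\tfrac1T w$ and $\xi=e_i(Tg(T,X))$, substituting successively shows that $\nu\omega\xi$ has vector $X+\tfrac1T w+\bigl(Tg(T,X)+\sum_{j\ne i}(\partial_j f)(y)X_j\bigr)\delta_i$, which by the displayed expansion equals $\epsilon\sigma$. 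Since all four automorphisms lie in $\GA_n(R_t[T,T^{-1}])$, the identity $\epsilon\sigma=\nu\omega\xi$ holds there.

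For the uniformity clause I would fix $d,r,s\ge0$ and assume $\deg f\le d$, the $t$-order of $f$ is $\le r$, and the $t$-order of $u$ is $\le s$. Since $x\in R^n$, the vector $y=x+u$ has $t$-order $\le s$; differentiation does not raise the $t$-order of a coefficient, so every $c_\beta$ and every coefficient of $\partial_j f$ has $t$-order $\le r$; and every monomial in the $y_j$ occurring in $f(y)$, in $(\partial_j f)(y)$, or in a coefficient of $h$ has degree $\le d$, hence $t$-order $\le ds$. Therefore $f(y)$, the entries of $\omega$, and the coefficients of $g=h$ all have $t$-order $\le r+ds$, whence $w$, $\omega$, and $g$ all have $t$-order $\le m:=\max(s,\,r+ds)$, which depends only on $d,r,s$.

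The only genuine obstacle is bookkeeping, and it concentrates in the second step: the expansion of $f(y+TX)$ must be carried out through binomial coefficients, not through $\partial^\alpha/\alpha!$, so that nothing is divided by an integer that need not be a unit in the general ring $R$. Once the expansion is in that division-free form — and once the composition order is kept straight — both the factorization and the $t$-order estimates are routine.
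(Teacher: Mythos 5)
Your argument is correct and essentially identical to the paper's: you substitute $\sigma$ into $\epsilon$, expand $f(x+u+TX)$ (Taylor/binomial, which is division-free in degree one anyway) to split off the constant term into the translation $\nu$ with $w=u+f(x+u)\delta_i$, the linear term into $\omega\in\E_n(R_t)$, and the $T$-divisible remainder into $\xi=e_i(Tg)$. Your explicit bound $m=\max(s,\,r+ds)$ just spells out what the paper leaves as ``apparent from the definitions of $w$ and $\omega$.''
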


\begin{proof}  A quick computation shows
\begin{equation}\label{a}\epsilon\sigma=e_i\left(\frac{1}{T}f(x+TX)\right)\circ \left(X+\frac{1}{T}u\right)
=\left(X+\frac{1}{T}u\right)\circ e_i\left(\frac{1}{T}f(x+u+TX)\right)\,.
\end{equation}
Using Taylor's expansion we see that $(1/T)f(x+u+TX)$ can be written as $(1/T)f(x+u)+\sum_{j=1}^n \frac{\partial f}{\partial X_j}(x+u)X_j+Tg(T,X)$ with $g$ as prescribed in the lemma.  (Note that the $i^\text{th}$ summand in the middle summation is zero.)  Therefore (\ref{a}) gives
\begin{align}\epsilon\sigma&=\left(X+\frac{1}{T}u\right)\circ\left(X+\frac{1}{T}f(x+u)\delta_i\right)\circ e_i\left(\sum_{j=1}^n \frac{\partial f}{\partial X_j}(x+u)X_j\right)\circ e_i(Tg(T,X))\notag\\
&=\left(X+\frac{1}{T}\left(u+f(x+u)\delta_i\right)\right)\circ e_i\left(\sum_{j=1}^n \frac{\partial f}{\partial X_j}(x+u)X_j\right)\circ e_i(Tg(T,X))\,.\label{b}
\end{align}
Letting $w=u+f(x+u)\delta_i$ and letting $\nu$, $\omega$, and $\xi$ be the three respective automorphisms in (\ref{b}), we have $\epsilon\sigma=\nu\omega\xi$ as desired.  Notice that the assertion about the $t$-orders is apparent from the definitions of $w$ and $\omega$.
\end{proof}

Lemmas \ref{linconjelem} and \ref{elemconjelem} give commutator formulas that will be needed for our results involving stable tameness and localization. 

\begin{lemma}[First Commutator Formula]\label{linconjelem}  Let $\alpha\in\GL_n(R)$ and let $\varepsilon=e_i(bf(X))$ for some $i\in\{1,\ldots,n\}$, $b\in R$, $f(X)\in R[\Xmi]$.  Let $\mathcal A$ denote the matrix of $\alpha$ and let $a$ be the $i^\text{th}$ column of $\mathcal A$. then
$$\left(\alpha\varepsilon\alpha^{-1}\right)^{[1]}=\kappa\nu\kappa^{-1}\nu^{-1}$$
where
$$\kappa=(X+a^\text{t}bY,Y),\qquad\nu=(X,Y+f(\mathcal A^{-1}X))\,,$$
$Y$ being the variable representing the added dimension.
\end{lemma}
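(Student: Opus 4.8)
The plan is to compute both sides explicitly and match them; a single structural observation does all the real work. Write $\varepsilon = X + bf(X)\delta_i$ using the $\delta$-notation of \ref{delta}, and let $a=(a_1,\dots,a_n)$ be the $i$-th column of $\mathcal A$. First I would compute $\alpha\varepsilon\alpha^{-1}$: substituting the coordinate vector $\mathcal A^{-1}X$ of $\alpha^{-1}$ into the coordinates of $\varepsilon$, and then the coordinates of the result into $\alpha=\mathcal A X$, a short matrix computation yields
$$\alpha\varepsilon\alpha^{-1}=X+b\,g(X)\,a,\qquad\text{where } g(X):=f(\mathcal A^{-1}X),$$
and $g(X)\,a$ denotes the vector with $j$-th entry $a_j\,g(X)$. (Here $f(\mathcal A^{-1}X)$ means $f$ evaluated at the coordinates of $\mathcal A^{-1}X$; the $i$-th coordinate is irrelevant since $f\in R[\Xmi]$.) Hence $(\alpha\varepsilon\alpha^{-1})^{[1]}$ is the $(n+1)$-tuple whose first $n$ entries are $X_j+a_j\,b\,g(X)$ and whose last entry is $Y$.

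The structural observation is that $g(X)$ is invariant under every substitution $X\mapsto X+c\,a$, for $c$ a scalar in any overring of $R[X]$ (in particular $c\in R[X,Y]$). Indeed $\mathcal A^{-1}a=\delta_i$, because $a$ is the $i$-th column of $\mathcal A$; therefore $g(X+c\,a)=f(\mathcal A^{-1}X+c\,\delta_i)=f(\mathcal A^{-1}X)=g(X)$, using that $f$ does not involve its $i$-th argument. In particular $g$ is unchanged under the coordinate substitutions induced by $\kappa$ and $\kappa^{-1}$, each of which translates $X$ by a multiple of $a$ and fixes $Y$.

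Next I would compute the commutator $\kappa\nu\kappa^{-1}\nu^{-1}$ by composing the four maps from the right, recalling that $\varphi\circ\psi$ substitutes the coordinates of $\psi$ into the polynomials of $\varphi$ (and writing $ab(Y-g)$ for the vector with $j$-th entry $a_j\,b(Y-g)$). One obtains $\kappa^{-1}\nu^{-1}=(X-ab(Y-g),\,Y-g)$; applying $\nu$, the last coordinate becomes $(Y-g)+g\bigl(X-ab(Y-g)\bigr)=(Y-g)+g=Y$ by the invariance of $g$, so $\nu\kappa^{-1}\nu^{-1}=(X-ab(Y-g),\,Y)$; and applying $\kappa$, each of the first $n$ coordinates becomes $\bigl(X_j-a_j\,b(Y-g)\bigr)+a_j\,bY=X_j+a_j\,b\,g$. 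Thus $\kappa\nu\kappa^{-1}\nu^{-1}=(X+ab\,g(X),\,Y)$, which is exactly the tuple $(\alpha\varepsilon\alpha^{-1})^{[1]}$ computed above. This establishes the formula.

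There is no genuine obstacle here: the lemma is a formal verification, and the only things to watch are the composition convention and keeping track of when the invariance of $g=f(\mathcal A^{-1}X)$ along $a$ is being used. Conceptually, $\kappa$ translates the $X$-block along $a$ by the amount $bY$ and $\nu$ translates $Y$ by the amount $g(X)$; since the amount for $\kappa$ is independent of $X$ and $g$ is $\kappa$-invariant, the commutator collapses to the single map $X\mapsto X+ab\,g(X)$, $Y\mapsto Y$.
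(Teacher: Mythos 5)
Your computation is correct: the formula for $\alpha\varepsilon\alpha^{-1}$, the composition convention, and the commutator calculation all check out, and the one nontrivial point --- that $g(X)=f(\mathcal A^{-1}X)$ is invariant under $X\mapsto X+c\,a$ because $\mathcal A^{-1}a=\delta_i$ and $f\in R[\Xmi]$ --- is exactly the right thing to isolate. The paper organizes the same verification differently: it first checks the ``untwisted'' identity $\kappa_0\nu_0\kappa_0^{-1}\nu_0^{-1}=\varepsilon^{[1]}$ with $\kappa_0=(X+bY\delta_i,Y)$ and $\nu_0=(X,Y+f(X))$ (where the invariance appears in the simpler form $f(X-\delta_ibY+\delta_ibf(X))=f(X)$), and then observes that $\kappa=\alpha^{[1]}\kappa_0(\alpha^{[1]})^{-1}$ and $\nu=\alpha^{[1]}\nu_0(\alpha^{[1]})^{-1}$, so the desired identity follows by conjugating the commutator by $\alpha^{[1]}$. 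Your route computes directly in the conjugated coordinates and so needs the invariance of $g$ along the direction $a$ rather than along $\delta_i$; in exchange it produces the explicit closed form $\alpha\varepsilon\alpha^{-1}=X+b\,g(X)\,a$ without introducing the auxiliary pair $(\kappa_0,\nu_0)$, while the paper's factorization makes it transparent why $\kappa$ and $\nu$ have the stated shapes (they are conjugates of obvious elementary maps), a structural fact that is reused in the proof of Lemma \ref{linelemconj}. Both arguments are complete; the difference is purely one of bookkeeping.
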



\begin{proof}
Let $\kappa_0=(X+bY\delta_i,Y)$ and $\nu_0=(X,Y+f(X))$. Then
\begin{multline*}
\left.\begin{array}{l}
\kappa_0\nu_0=(X+bY\delta_i+bf(X)\delta_i,Y+f(X))\\
\kappa_0^{-1}\nu_0^{-1}=(X-bY\delta_i+bf(X)\delta_i,Y-f(X))
\end{array}\right\}\\
\Longrightarrow\
\kappa_0\nu_0\kappa_0^{-1}\nu_0^{-1}=(X+bf(X)\delta_i,Y)=\varepsilon^{[1]}
\end{multline*}
(the latter resulting from the fact that
$f(X-\delta_ibY+\delta_ibf(X))=f(X)$ since $f \in R[X,\hat{i}]$).
Also,
$$
\alpha^{[1]}\kappa_0(\alpha^{[1]})^{-1}=(\mathcal A(\mathcal A^{-1}X)+\mathcal A(bY\delta_i),Y)=\kappa
$$
$$
\alpha^{[1]}\nu_0(\alpha^{[1]})^{-1}=(\mathcal A(\mathcal A^{-1}X),Y+f(\mathcal A^{-1}X))=\nu
$$
whence
$$
\kappa\nu\kappa^{-1}\nu^{-1}  = \alpha^{[1]}(\kappa_0\nu_0\kappa_0^{-1}\nu_0^{-1})(\alpha^{[1]})^{-1}=\alpha^{[1]}\varepsilon^{[1]}(\alpha^{[1]})^{-1}=(\alpha\varepsilon\alpha^{-1})^{[1]}\,,
$$
and we are done.
\end{proof}

Lemma \ref{linconjelem} has the following two corollaries which are interesting in themselves:

\begin{corol}  The group $\EA_{\infty}(R)$ is a normal subgroup of $\T_{\infty}(R)$.
\end{corol}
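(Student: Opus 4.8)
The plan is to reduce everything to conjugating a single elementary automorphism by a linear one and then invoke Lemma~\ref{linconjelem}.  Recall that $\T_\infty(R)$ is generated by $\GL_\infty(R)$ together with $\EA_\infty(R)$; hence the normalizer of $\EA_\infty(R)$ in $\T_\infty(R)$, being a subgroup, equals all of $\T_\infty(R)$ as soon as it contains both of these generating subgroups.  It contains $\EA_\infty(R)$ trivially, so the whole task is to show that every $\alpha\in\GL_n(R)$ normalizes $\EA_\infty(R)$.

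Since $\EA_n(R)$ is generated by the elementary automorphisms $e_i(g)$ with $g\in R[\Xmi]$, and since $\alpha\bigl(\rho_1\cdots\rho_s\bigr)\alpha^{-1}=(\alpha\rho_1\alpha^{-1})\cdots(\alpha\rho_s\alpha^{-1})$, it is enough to treat a single $\varepsilon=e_i(g)$.  Here I would apply Lemma~\ref{linconjelem} with $b=1$ and $f=g$, which gives $(\alpha\varepsilon\alpha^{-1})^{[1]}=\kappa\nu\kappa^{-1}\nu^{-1}$ in $\GA_{n+1}(R)$, where $\kappa=(X+a^\text{t}Y,Y)$ with $a$ the $i^\text{th}$ column of the matrix $\mathcal A$ of $\alpha$, and $\nu=(X,Y+g(\mathcal A^{-1}X))$.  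The key observation is that both factors are already products of elementaries over $R$: $\nu$ is elementary in the last position because $g(\mathcal A^{-1}X)$ involves only $X_1,\ldots,X_n$; and $\kappa$ is the composite of the $n$ pairwise-commuting elementary automorphisms $X_j\mapsto X_j+a_jY$, each admissible since the increment $a_jY$ does not involve $X_j$.  Thus $\kappa,\nu\in\EA_{n+1}(R)$, so $(\alpha\varepsilon\alpha^{-1})^{[1]}\in\EA_{n+1}(R)$, and therefore $\alpha\varepsilon\alpha^{-1}\in\EA_\infty(R)$.

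Putting this together, for $\rho=\rho_1\cdots\rho_s\in\EA_n(R)$ with each $\rho_j$ elementary and for $\alpha\in\GL_n(R)$, each $\alpha\rho_j\alpha^{-1}$ lies in $\EA_\infty(R)$ by the previous step, hence so does $\alpha\rho\alpha^{-1}$.  Passing to the direct limit, $\GL_\infty(R)$ normalizes $\EA_\infty(R)$, which together with the trivial self-normalization of $\EA_\infty(R)$ yields $\EA_\infty(R)\trianglelefteq\T_\infty(R)$.

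I do not expect a genuine obstacle here; the only point needing care is the bookkeeping with stabilization (see \ref{stabdef} and \ref{dirlim}) --- namely that conjugation is compatible with the embeddings $\GA_n\hookrightarrow\GA_{n+1}$, so that membership of the once-stabilized commutator $\kappa\nu\kappa^{-1}\nu^{-1}$ in $\EA_{n+1}(R)$ really does certify that $\alpha\varepsilon\alpha^{-1}\in\EA_\infty(R)$.  This is immediate from the definitions but is the step most easily glossed over.
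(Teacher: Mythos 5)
Your proof is correct and follows essentially the same route as the paper: apply Lemma~\ref{linconjelem} with $b=1$, observe that $\kappa$ and $\nu$ lie in $\EA_{n+1}(R)$, and use $\T_\infty(R)=\langle\GL_\infty(R),\EA_\infty(R)\rangle$ to conclude. The extra bookkeeping you include (factoring $\kappa$ into commuting elementaries, compatibility with stabilization) is fine but the paper treats it as immediate.
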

\begin{proof}  It is clear that $\T_{\infty}(R)=\langle\,\GL_{\infty}(R),\EA_{\infty}(R)\,\rangle$.  Thus we have only to show that $\GL_{\infty}(R)$ is in the normalizer of $\EA_{\infty}(R)$, which is immediate from the fact that both $\kappa$ and $\nu$ (from the lemma, setting $b=1$) lie in $\EA_{n+1}(R)$.
\end{proof}

\begin{ques} Is $\EA_{\infty}(R)$ a normal subgroup of $\GA_{\infty}(R)$?
\end{ques}

\begin{corol}\label{linconjelemcor} Let $t\in R$ be a non-zero-divisor.  Let $m\ge0$ be an integer and let $\alpha\in\GL_n(R_t)$ be such that the $t$-orders of $\alpha$ and $\alpha^{-1}$ are $\le m$.  Let $\varepsilon=e_i(g(X))\in\GA_n(R)$ with $g(X)\in t^{m+dm}R[\Xmi]$, where $d=\degr\,g(X)$.  Then $\alpha\varepsilon\alpha^{-1}\in\GA_n(R)$ and $(\alpha\varepsilon\alpha^{-1})^{[1]}$ lies in $\EA_{n+1}(R)$.
\end{corol}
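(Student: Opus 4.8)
The plan is to reduce directly to the First Commutator Formula (Lemma~\ref{linconjelem}), applied over the localization $R_t$, and then to observe that the two elementary automorphisms $\kappa,\nu$ appearing in the resulting commutator already have coefficients in $R$ — this is forced by the $t$-order hypotheses on $\alpha$ and $\alpha^{-1}$. We may assume $g(X)\neq 0$, the case $g=0$ being trivial, and set $d=\degr g(X)$.

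First I would split the factor $t^{m+dm}$ as $t^m\cdot t^{dm}$: write $b=t^m$ and $f(X)=t^{-m}g(X)$, so that $\varepsilon=e_i(bf(X))$ with $f(X)\in t^{dm}R[\Xmi]$ and $\degr f=d$, and put $f'=t^{-dm}f\in R[\Xmi]$. Applying Lemma~\ref{linconjelem} to $\alpha\in\GL_n(R_t)$ and this $\varepsilon$ gives
$$
(\alpha\varepsilon\alpha^{-1})^{[1]}=\kappa\nu\kappa^{-1}\nu^{-1},\qquad \kappa=(X+a^{\text{t}}bY,Y),\quad \nu=(X,Y+f(\mathcal A^{-1}X)),
$$
where $\mathcal A$ is the matrix of $\alpha$ and $a$ its $i^{\text{th}}$ column. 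I would then check that $\kappa,\nu\in\EA_{n+1}(R)$. Since the $t$-order of $\alpha$ is $\le m$, the matrix $t^m\mathcal A$ has entries in $R$, whence $a^{\text{t}}b=t^ma^{\text{t}}\in R^n$; thus $\kappa$ is a product of elementary automorphisms over $R$, so $\kappa\in\EA_{n+1}(R)$. For $\nu$, the $t$-order of $\mathcal A^{-1}$ being $\le m$ means each component of the linear vector $\mathcal A^{-1}X$ has the form $t^{-m}\ell(X)$ with $\ell$ a linear form over $R$; substituting these into a polynomial of degree $\le d$ multiplies denominators by at most $t^{dm}$, so $f(\mathcal A^{-1}X)=t^{dm}f'(\mathcal A^{-1}X)\in R[X]$. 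Hence $\nu$ is elementary in the $(n+1)^{\text{st}}$ position over $R$, and therefore $(\alpha\varepsilon\alpha^{-1})^{[1]}=\kappa\nu\kappa^{-1}\nu^{-1}\in\EA_{n+1}(R)$.

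Finally, to see $\alpha\varepsilon\alpha^{-1}\in\GA_n(R)$, I would note that both $(\alpha\varepsilon\alpha^{-1})^{[1]}$ and its inverse $(\alpha\varepsilon^{-1}\alpha^{-1})^{[1]}$ now lie in $\GA_{n+1}(R)$, so all components of $\alpha\varepsilon^{\pm1}\alpha^{-1}$ lie in $R[X]$, which is exactly the condition for $\alpha\varepsilon\alpha^{-1}$ to lie in $\GA_n(R)$. The hard part here is not conceptual but bookkeeping: the point of the exact exponent $m+dm$ in the hypothesis is that one factor $t^m$ is absorbed into $\kappa$ by multiplying through $\mathcal A$, while the remaining $t^{dm}$ is precisely what is needed to clear the denominators that $\mathcal A^{-1}$ introduces when the degree-$d$ polynomial $f$ is composed with it inside $\nu$.
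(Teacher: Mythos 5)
Your proposal is correct and is essentially the paper's own argument: the paper likewise writes $g=t^m f$ with $f\in t^{dm}R[\Xmi]$, applies Lemma~\ref{linconjelem} over $R_t$ with $b=t^m$, and notes that the $t$-order hypotheses force $a^{\text{t}}b\in R^n$ and $f(\mathcal A^{-1}X)$ to have coefficients in $R$, so that $\kappa,\nu\in\EA_{n+1}(R)$. Your added bookkeeping (clearing denominators degree by degree, and deducing $\alpha\varepsilon\alpha^{-1}\in\GA_n(R)$ from the commutator identity applied to $\varepsilon^{\pm1}$) just makes explicit what the paper leaves to the reader.
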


\begin{proof}  Writing $g=t^mf$ where $f\in t^{dm}R[\Xmi]$, we apply Lemma \ref{linconjelem} with $b=t^m$ (and with $R_t$ in the role of the lemma's $R$).  Our hypotheses imply that $a^\text{t}b\in R^n$ and $f(\mathcal A^{-1}X)\in R[\Xmi]$, so the conclusion follows.
\end{proof}

The next lemma is a statement about two dimensional automorphisms, for which $X_1,X_2$ will be our dimension variables.

\begin{lemma}[Second Commutator Formula]\label{elemconjelem}  Let $\psi,\varepsilon\in\EA_2(R)$ be elementary of the form $\psi=e_1(f)$, $\varepsilon=e_2(bg)$, where $f\in R[X_2]$, $g\in R[X_1]$, $b\in R$.  Then
$$\left(\psi\varepsilon\psi^{-1}\right)^{[1]}=\gamma\omega\gamma^{-1}\omega^{-1}$$
where
$$\gamma=(X_1+f(X_2+bY)-f(X_2),X_2+bY,Y)\qquad\omega=(X_1,X_2,Y+g(X_1-f(X_2)))$$
with $Y$ representing the added dimension.
\end{lemma}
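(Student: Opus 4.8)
The plan is to mimic the proof of Lemma~\ref{linconjelem}. I would introduce "seed" automorphisms $\gamma_0,\omega_0\in\EA_3(R)$ (with $Y$ the added dimension variable) whose commutator is exactly $\varepsilon^{[1]}$, and which are carried to $\gamma$ and $\omega$ by conjugation by $\psi^{[1]}$. Since stabilization $\GA_2(R)\to\GA_3(R)$ is a group homomorphism and conjugation by a fixed element is a group automorphism, this gives at once
$$\gamma\omega\gamma^{-1}\omega^{-1}=\psi^{[1]}\bigl(\gamma_0\omega_0\gamma_0^{-1}\omega_0^{-1}\bigr)(\psi^{[1]})^{-1}=\psi^{[1]}\varepsilon^{[1]}(\psi^{[1]})^{-1}=(\psi\varepsilon\psi^{-1})^{[1]}\,,$$
which is the assertion.

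The correct seeds are $\gamma_0=e_2(bY)=(X_1,\,X_2+bY,\,Y)$ and $\omega_0=e_3(g(X_1))=(X_1,\,X_2,\,Y+g(X_1))$. These are genuinely elementary over $R$: $bY$ does not involve $X_2$ and $g(X_1)$ does not involve $Y$, which is exactly where the hypotheses $f\in R[X_2]$, $g\in R[X_1]$ enter. The first thing I would check, by direct substitution, is the standard commutator identity
$$\gamma_0\omega_0\gamma_0^{-1}\omega_0^{-1}=(X_1,\,X_2+bg(X_1),\,Y)=\varepsilon^{[1]}\,,$$
using $\gamma_0^{-1}=(X_1,X_2-bY,Y)$ and $\omega_0^{-1}=(X_1,X_2,Y-g(X_1))$; the $bY$ and $bg(X_1)$ terms cancel cleanly since $\gamma_0$ fixes $X_1$ and $\omega_0$ fixes $X_1,X_2$.

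Next I would compute the two conjugates. With $(\psi^{[1]})^{-1}=(X_1-f(X_2),\,X_2,\,Y)$, one substitution gives $\psi^{[1]}\gamma_0(\psi^{[1]})^{-1}=(X_1-f(X_2)+f(X_2+bY),\,X_2+bY,\,Y)=\gamma$, using that $\psi^{[1]}$ fixes $X_2$ and $Y$; and likewise $\psi^{[1]}\omega_0(\psi^{[1]})^{-1}=(X_1,\,X_2,\,Y+g(X_1-f(X_2)))=\omega$, using that the inverse sends $X_1\mapsto X_1-f(X_2)$. Assembling these two identities with the previous one as in the first display completes the proof.

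There is no genuine obstacle here: the argument is three short substitution computations glued together by the fact that conjugation and stabilization are group homomorphisms. The only step requiring thought is the reverse-engineering of $\gamma_0$ and $\omega_0$ from the prescribed $\gamma$, $\omega$, and $\varepsilon^{[1]}$; and the one thing to watch is the composition convention (outer automorphism evaluated at the inner one, as in the proof of Lemma~\ref{linconjelem}) together with keeping straight which elementary acts in which position.
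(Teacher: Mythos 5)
Your proposal is correct and is essentially the paper's own proof: the same seeds $\gamma_0=(X_1,X_2+bY,Y)$ and $\omega_0=(X_1,X_2,Y+g(X_1))$, the same verification that their commutator is $\varepsilon^{[1]}$, and the same conjugation by $\psi^{[1]}$ producing $\gamma$ and $\omega$. The substitution computations match the paper's conventions, so nothing further is needed.
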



\begin{proof} 
Let $\gamma_0=(X_1,X_2+bY,Y)$ and $\omega_0=(X_1,X_2,Y+g(X_1))$.
Then
\begin{multline*}
\left.\begin{array}{l}
\gamma_0\omega_0=(X_1,X_2+bY+bg(X_1),Y+g(X_1))\\
\gamma_0^{-1}\omega_0^{-1}=(X_1,X_2-bY+bg(X_1),Y-g(X_1))
\end{array}\right\}\Longrightarrow\
\gamma_0\omega_0\gamma_0^{-1}\omega_0^{-1}\\=(X_1,X_2+bg(X_1),Y)=\varepsilon^{[1]}
\end{multline*}
Also, $\psi^{[1]}\gamma_0(\psi^{[1]})^{-1}=\gamma$ and
$\psi^{[1]}\omega_0(\psi^{[1]})^{-1}=\omega$, whence
$$
\gamma\omega\gamma^{-1}\omega^{-1} = \psi^{[1]}(\gamma_0\omega_0\gamma_0^{-1}\omega_0^{-1})(\psi^{[1]})^{-1}=\psi^{[1]}\varepsilon^{[1]}(\psi^{[1]})^{-1}=(\psi\varepsilon\psi^{-1})^{[1]}\,,
$$
which completes the proof.
\end{proof}

We have as a consequence\footnote{A non-stable statement of this kind appears in \cite{Connell} (Lemma 2.2) and in \cite{vdE} (Proposition 5.2.3).  In \cite{Connell} the definition of $\EA_n(R)$ (Definition 2.1) is slightly more restrictive, coinciding with ours when $R$ is a $\Q$-algebra.  Some other interesting facts are proved in \cite{Connell}.  For example it is shown (Theorem 2.7) that the group $[\GA_\infty(R),\GA_\infty(R)]$ is perfect and coincides with the normal closure of $\EA_\infty(R)$ in $\GA_\infty(R)$ (and here the difference in the definitions of $\EA_n(R)$ becomes moot).  An open question, posed in \cite{Connell}, is the following:  Is $\EA_\infty(R)=[\GA_\infty(R),\GA_\infty(R)]\,$?}:

\begin{corol} The group $\EA_\infty(R)$ is perfect, i.e., $\EA_\infty(R)=[\,\EA_\infty(R),\EA_\infty(R)\,]$.
\end{corol}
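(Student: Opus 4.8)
The goal is to show $\EA_\infty(R)=[\EA_\infty(R),\EA_\infty(R)]$, i.e., that $\EA_\infty(R)$ is perfect. Since commutators always lie in the derived subgroup, only the inclusion $\EA_\infty(R)\subseteq[\EA_\infty(R),\EA_\infty(R)]$ requires argument, and since $\EA_\infty(R)$ is generated by the stabilizations of elementary automorphisms $e_i(f)$ (with $f\in R[\Xmi]$, in any dimension $n$), it suffices to show that each such $e_i(f)$, viewed in sufficiently high dimension, is a commutator of two elements of $\EA_\infty(R)$.

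The plan is to invoke Lemma~\ref{elemconjelem} (Second Commutator Formula) with $b=1$. First I would reduce to the case $i=1$, $n=2$: given $e_i(f)$ in dimension $n$, relabel variables so that the active position is the first, and separate out one of the variables on which $f$ does \emph{not} depend to play the role of $X_2$ — if $f$ depends on all remaining variables, simply stabilize by adding one dummy variable and use that as $X_2$, so that after this harmless stabilization we are looking at $e_1(f)$ with $f\in R[X_2]\cdot(\text{other variables})$; more precisely, by restriction/extension of scalars (treating the variables other than $X_1,X_2$ as part of the coefficient ring) it is enough to treat $\psi\varepsilon\psi^{-1}$ type expressions over a general ring, so I will just produce each generator $e_1(g)$ with $g\in R[X_2]$ as a commutator and then extend scalars. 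Concretely, in $\EA_2(R)$ take $\psi=e_1(f)$ with $f\in R[X_2]$ and $\varepsilon=e_2(g)$ with $g\in R[X_1]$, both elementary; then $\psi\varepsilon\psi^{-1}=(X_1+\cdots,X_2+g(X_1-f(X_2)),\ldots)$ is again elementary in the second position, namely of the form $e_2(h)$ for suitable $h\in R[X_1]$, and Lemma~\ref{elemconjelem} exhibits its stabilization $(\psi\varepsilon\psi^{-1})^{[1]}$ as $\gamma\omega\gamma^{-1}\omega^{-1}$ with $\gamma,\omega\in\EA_3(R)$.

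The one point needing a little care is that a single application of Lemma~\ref{elemconjelem} only realizes elementary automorphisms in the \emph{second} position whose "slot function" has the special form $g(X_1-f(X_2))$ — but since $f$ ranges over all of $R[X_2]$ and $g$ over all of $R[X_1]$, and the substitution $X_1\mapsto X_1-f(X_2)$ is an automorphism of $R[X_1]$ over $R[X_2]$ (indeed over $R$), the composite $g(X_1-f(X_2))$ ranges over \emph{all} of $R[X_1]$; thus every $e_2(h)$ with $h\in R[X_1]$ arises this way, taking $f=0$ if desired (then $\psi$ is trivial and $\varepsilon$ itself must be expressed differently) — so to avoid the degenerate $f=0$ case I would simply note that for $h\in R[X_1]$ we may pick any nonzero-seeming $f\in R[X_2]$, set $g(X_1)=h(X_1+f(X_2))$, wait — $g$ must lie in $R[X_1]$ not $R[X_1,X_2]$. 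The clean fix: apply the lemma with $f\in R[X_2]$ arbitrary and $g\in R[X_1]$ arbitrary to get that $e_2\bigl(g(X_1-f(X_2))\bigr)^{[1]}$ is a commutator in $\EA_3(R)$; specializing $f$ to a constant $c\in R$ (constants lie in $R[X_2]$), $\psi=e_1(c)$ is an honest elementary automorphism and $g(X_1-c)$ ranges over all of $R[X_1]$ as $g$ does. Hence every generator $e_2(h)$, $h\in R[X_1]$, of $\EA_2(R)$ has its stabilization in $[\EA_3(R),\EA_3(R)]\subseteq[\EA_\infty(R),\EA_\infty(R)]$, and by symmetry (relabeling positions) and extension of scalars, every generator $e_i(f)$ of $\EA_\infty(R)$ lies in $[\EA_\infty(R),\EA_\infty(R)]$.

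The main obstacle is purely bookkeeping: making sure the reductions (choice of which variable plays the role of $X_2$, the extension-of-scalars step that lets us pass from the two-variable statement to general $n$, and the need for the slot function in Lemma~\ref{elemconjelem} to cover \emph{all} elementary automorphisms in a given position) fit together so that \emph{every} generator of $\EA_\infty(R)$ is captured. Once one sees that taking $f$ to be a constant in Lemma~\ref{elemconjelem} already yields all of $e_2(R[X_1])$ up to stabilization, and that $\EA_\infty(R)$ is generated by such elementaries in all positions and dimensions, the proof is complete.
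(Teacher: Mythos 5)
Your proof is correct and follows essentially the same route as the paper: reduce to two-dimensional elementaries by extension of scalars (stabilizing once if needed) and apply the Second Commutator Formula (Lemma \ref{elemconjelem}) with $b=1$, noting that both $\gamma$ and $\omega$ lie in $\EA_3(R)$. The only difference is cosmetic: the paper simply takes $\psi=\id$ (i.e.\ $f=0$), which is not a degenerate case --- the formula then exhibits $\varepsilon^{[1]}=e_2(g)^{[1]}$ directly as the commutator of the elementaries $(X_1,X_2+Y,Y)$ and $(X_1,X_2,Y+g(X_1))$ --- so your detour through a constant $f=c$ is valid but unnecessary.
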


\begin{proof} It is easily seen that the automorphism $\gamma$ (as well as $\omega$) lies in $\EA_3(R)$.  Also, an $n$-dimensional elementary automorphism can be viewed as 2-dimensional after applying extension of scalars.  Thus the corollary follows from Lemma \ref{elemconjelem} taking $\psi=\id$ and $b=1$.
\end{proof}

The following lemma, which is a bit technical and also very subtle, will play a critical role in the main results.  Here $X=\X$ and $Z=\Z$ are systems of variables.

\begin{lemma}\label{shorten}  Let $\alpha\in\GL_n(R_t)$, and let $\varepsilon=e_i(f)$ with $f(X)\in R_t[\Xmi]$.  Let $\gamma\in\GL_n(R_t[X])$ and let $p(X)\in R_t[X]^n$.  Let $\tau=Z+(1/t^N)p(X)$.  Then there exist $\tilde\gamma\in\GL_n(R_t[X])$ and $\tilde p(X)\in R_t[X]^n$ such that for $N$ sufficiently large, there exists $\zeta\in\EA_{n+1}(R[X])$ (depending on $N$) such that 
\begin{equation}\label{convert}\left(\Psi_{t^N}(\alpha)\Psi_{t^N}(\varepsilon)\tau\gamma\right)^{[1]}=(\tilde\tau\tilde\gamma)^{[1]}\zeta\end{equation} 
where $\tilde\tau=Z+(1/t^N)\tilde p(X)$.  The required magnitude of $N$ is dependent only on the $t$-orders of $\alpha$, $f(X)$, $\gamma$, and $p(X)$, and the degree of $f(X)$.  Also, the $t$-orders of $\tilde\gamma$ and $\tilde p(X)$ can be bounded by a function depending only on these parameters as well.
\end{lemma}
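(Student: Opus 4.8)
The plan is to strip the denominator-carrying pieces off the left side of (\ref{convert}) one at a time, sweeping each to the far left where it fuses with a translation-times-linear head, so that what accumulates on the right is genuinely integral. The ingredients are (\ref{Psilinear}), (\ref{conjugate}), Lemma \ref{elemsweep}, and Corollary \ref{linconjelemcor} (which rests on the First Commutator Formula, Lemma \ref{linconjelem}). First, writing $\mathcal I+\mathcal M$ for the matrix of $\alpha$, the $\Psi_{t^N}$-analogue of (\ref{Psilinear}) gives $\Psi_{t^N}(\alpha)=\nu_\alpha\tilde\alpha$, where $\nu_\alpha=Z+(1/t^{N})\mathcal MX$ is a translation over $R_t[X]$ of the required shape and $\tilde\alpha\in\GL_n(R_t)$ is the linear automorphism with matrix $\mathcal I+\mathcal M$ in the variables $Z$. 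Thus the left side of (\ref{convert}), before the $(\cdot)^{[1]}$, equals $\nu_\alpha\tilde\alpha\,\Psi_{t^N}(\varepsilon)\,\tau\,\gamma$.

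The \emph{crux} is to rewrite $\Psi_{t^N}(\varepsilon)\,\tau$ by means of Lemma \ref{elemsweep}, applied not over $R$ but over the polynomial ring $R[X]$ (on which $t$ remains a non-zero-divisor). In the dictionary one takes the lemma's dimension variables to be $Z$, its base point $x$ to be $0$, its polynomial to be $f(X+Z)$ — a polynomial in $Z$ over $R_t[X]$, free of $Z_i$ — and its vector $u$ to be $p(X)$, and one specializes the lemma's indeterminate $T$ to $t^{N}$. Under this dictionary the lemma's $\epsilon=e_i\bigl((1/T)f(X+TZ)\bigr)$ becomes $\Psi_{t^N}(\varepsilon)$ by (\ref{Psielem}) and its $\sigma$ becomes $\tau$, so Lemma \ref{elemsweep} yields $\Psi_{t^N}(\varepsilon)\,\tau=\nu'\omega'\xi'$ in $\GA_n(R_t[X])$, where $\nu'=Z+(1/t^{N})w'$ is a translation over $R_t[X]$, $\omega'\in\E_n(R_t[X])$, and $\xi'=e_i\bigl(t^{N}g(t^{N},Z)\bigr)$ has $Z$-degree at most $\deg f$; crucially, the $t$-orders of $w'$, $\omega'$, and $g$ are bounded by an integer $m'$ depending only on $\deg f$, the $t$-order of $f$, and the $t$-order of $p$. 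Hence as soon as $N\ge m'$ the automorphism $\xi'$ lies in $\EA_n(R[X])$, and in fact its elementary polynomial $t^{N}g(t^{N},Z)$ is divisible by $t^{N-m'}$.

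Now reassemble. Pushing $\tilde\alpha$ to the right past the translation $\nu'$ via (\ref{conjugate}) replaces $\tilde\alpha\nu'$ by $(\text{translation})\cdot\tilde\alpha$; absorbing $\tilde\alpha$ into the linear automorphism $\omega'$ and combining the two translations now accumulated on the left, one gets
$$\Psi_{t^N}(\alpha)\,\Psi_{t^N}(\varepsilon)\,\tau\,\gamma=\tilde\tau\,\gamma_0\,\xi'\,\gamma,$$
where $\tilde\tau=Z+(1/t^{N})\tilde p(X)$ is a translation of the required form and $\gamma_0=\tilde\alpha\omega'\in\GL_n(R_t[X])$, with $\tilde p$ and $\gamma_0$ having $t$-orders bounded in terms of the $t$-orders of $\alpha$, $f$, and $p$. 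It remains to move $\xi'$ past $\gamma$: write $\xi'\gamma=\gamma(\gamma^{-1}\xi'\gamma)$. Since the elementary polynomial of $\xi'$ is divisible by $t^{N-m'}$ and $\xi'$ has $Z$-degree at most $\deg f$, Corollary \ref{linconjelemcor} applied over $R[X]$ (with $\gamma^{-1}$ in the role of the linear automorphism and $\xi'$ in the role of the elementary one) shows that once $N-m'$ exceeds $(1+\deg f)$ times a common bound on the $t$-orders of $\gamma$ and $\gamma^{-1}$, the conjugate $\gamma^{-1}\xi'\gamma$ lies in $\GA_n(R[X])$ and $\zeta:=(\gamma^{-1}\xi'\gamma)^{[1]}$ lies in $\EA_{n+1}(R[X])$. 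Applying $(\cdot)^{[1]}$ and setting $\tilde\gamma=\gamma_0\gamma$ then gives (\ref{convert}), with the threshold on $N$ and the $t$-order bounds on $\tilde\gamma$ and $\tilde p$ depending only on the advertised parameters.

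I expect the middle step to be the main obstacle. A naive attempt to commute a ``clean'' elementary past $\tau$ fails, because conjugating an elementary over $R[X]$ by the translation $Z+(1/t^{N})p(X)$ always reinserts denominators, no matter how large $N$ is. Lemma \ref{elemsweep} is precisely the device that rewrites $\Psi_{t^N}(\varepsilon)\,\tau$ as $(\text{translation})\cdot(\text{linear})\cdot(\text{elementary with }\emph{highly }t\emph{-divisible coefficient})$, and it is that surplus divisibility — not mere integrality — that survives the final conjugation by the denominator-bearing $\gamma$. Making all the thresholds uniform, so that a single $N$ works, relies on the fact (the ``uniform'' clauses of Lemmas \ref{elemsweep} and \ref{linconjelem}) that every bound in sight depends only on $\deg f$ and on the listed $t$-orders.
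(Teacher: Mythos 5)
Your proposal is correct and follows essentially the same route as the paper's proof: it applies Lemma \ref{elemsweep} over $R[X]$ (with $T$ later specialized to $t^N$) to rewrite $\Psi_{t^N}(\varepsilon)\tau$, merges the linear part of $\Psi_{t^N}(\alpha)$ with the translations exactly as in the paper's $\tilde p=(\mathcal I+\mathcal M)w+\mathcal MX$ and $\tilde\gamma=((\mathcal I+\mathcal M)Z)\circ\omega\circ\gamma$, and finishes with Corollary \ref{linconjelemcor} applied to $\gamma^{-1}\xi(t^N)\gamma$. The only differences are cosmetic (factoring $\Psi_{t^N}(\alpha)$ via (\ref{Psilinear}) instead of the paper's single affine map $\beta(T)$), and you correctly preserve the crucial point that $\tilde p$, $\tilde\gamma$, and their $t$-order bounds are fixed before $N$ is chosen.
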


\begin{rema}\label{important} It is crucial that $\tilde p(X)$ and $\tilde\gamma$ of Lemma \ref{shorten}, and the bound on their $t$-orders, depend \underbar{only} on $\alpha$, $f(X)$, $p(X)$ and $\gamma$, and \underbar{not} on $N$.  Observe in the proof below that they are specified before $N$ is chosen.  Only $\zeta$ depends on $N$.
\end{rema}

\begin{proof}  Letting $T$ be a new variable, we define the following elements of the group $\GA_n(R_t[T,T^{-1},X])$: 
$$\epsilon=\epsilon(T)=e_i\left(\frac{1}{T}f(X+TZ)\right)\,,\qquad \sigma=\sigma(T)=Z+\frac{1}{T}p(X)\,.$$
We apply Lemma \ref{elemsweep}, with $R[X]$ and $Z$ playing the roles of the lemma's $R$ and $X$, to get $\epsilon\sigma=\nu\omega\xi$, with $\omega\in\E_n(R_t[X])$ and
$$\nu=Z+\frac{1}{T}w(X)\,,\qquad\xi=e_i(Tg(T,Z))\,,$$ where $w(X)\in R_t[X]^n$, $g(T,Z)\in R_t[X][T,\Zmi]$.  Since the $Z$-degree of $f(X+TZ)$ is the same as the $X$-degree of $f(X)$, Lemma \ref{elemsweep} also tells us that the $t$-orders of $\omega$, $w(X)$, and $g(T,Z)$  are bounded by a function of the $X$-degree of $f(X)$ and the $t$-orders of $f(X)$ and $p(X)$.

Compose on the right with $\gamma$ to get $\epsilon\sigma\gamma=\nu\omega\xi\gamma=\nu\omega\gamma\gamma^{-1}\xi\gamma$.

Now write the matrix of $\alpha$ as $\mathcal I+\mathcal M$ ($\mathcal I$ being the identity matrix) and define $$\beta=\beta(T)=(\mathcal I+\mathcal M)Z+\frac{1}{T}\mathcal MX\in\Af_n(R_t[T,T^{-1},X])\,.$$
Note that $\beta\nu=\nu'\beta'$  where $\nu'=Z+(1/T)[(\mathcal I+\mathcal M)w(X)+\mathcal MX]$ and $\beta'=(\mathcal I+\mathcal M)Z$.  Letting
\begin{equation}\label{tildep}
\tilde p(X)=(\mathcal I+\mathcal M)w(X)+\mathcal MX
\end{equation} 
we have $\nu'=Z+(1/T)\tilde p(X)$.  It is clear from (\ref{tildep}) that the $t$-order of $\tilde p(X)$ is bounded by a function of the $t$-orders of $\alpha$ and $w(X)$, and we have already observed that the latter $t$-order is bounded by a function of the $X$-degree of $f(X)$ and the $t$-orders of $f(X)$ and $p(X)$.

Thus we have (and here we indicate precisely which automorphisms involve $T$): 
\begin{equation}\label{unspecial}\beta(T)\epsilon(T)\sigma(T)\gamma=\nu'(T)\beta'\omega\gamma\gamma^{-1}\xi(T)\gamma\,.\end{equation}  
We now observe that $\beta(t^N)=\Psi_{t^N}(\alpha)$, $\epsilon(t^N)=\Psi_{t^N}(\varepsilon)$ (see Example \ref{examp}), and $\sigma(t^N)=\tau$.  Therefore setting $T=t^N$ in (\ref{unspecial}) gives $\Psi_{t^N}(\alpha)\Psi_{t^N}(\varepsilon)\tau\gamma=\nu'(t^N)\beta'\omega\gamma\gamma^{-1}\xi(t^N)\gamma$.  Setting $\tilde\tau=\nu'(t^N)$ and 
\begin{equation}\label{tildegamma}
\tilde\gamma=\beta'\omega\gamma=((\mathcal I+\mathcal M)Z)\circ\omega\circ\gamma
\end{equation} 
(which lies in $\GL_n(R_t[X])\,$), we obtain
\begin{equation}\label{almost}\Psi_{t^N}(\alpha)\Psi_{t^N}(\varepsilon)\tau\gamma=\tilde\tau\tilde\gamma\gamma^{-1}\xi(t^N)\gamma\,.
\end{equation}
It is apparent from (\ref{tildegamma}) and observations made earlier that the $t$-order of $\tilde\gamma$ is bounded by a function of the stated parameters.

Finally, we apply Corollary \ref{linconjelemcor} to $\gamma^{-1}\xi(t^N)\gamma$, with $R[X]$ in the place of $R$.  Since $\xi=e_i(Tg(T,Z))$ it is clear that a sufficiently large choice of $N$ will make $\xi(t^N)$ meet the hypothesis of the corollary, so that, setting $\zeta=(\gamma^{-1}\xi(t^N)\gamma)^{[1]}$, equation (\ref{convert}) follows from (\ref{almost}) and $\zeta\in\EA_{n+1}(R[X])$ as desired. 

Note that the required magnitude of $N$ depends on the $t$-order of $g(T,Z)$ which was provided by Lemma \ref{elemsweep}, and, accordingly, depends only on the prescribed parameters.
\end{proof}

\begin{propo}\label{elem}  If $R$ is a ring for which $\SL_n(R)=E_n(R)$, then $\T_n(R)\cap\SA_n(R)=\EA_n(R)$.  The hypothesis holds when $R$ is a local ring.
\end{propo}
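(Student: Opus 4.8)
The plan is to prove the two inclusions of $\T_n(R)\cap\SA_n(R)=\EA_n(R)$ separately, with the hypothesis $\SL_n(R)=\E_n(R)$ entering only in the nontrivial direction. The inclusion $\supseteq$ is immediate from \ref{defs}: one has $\EA_n\subseteq\T_n=\langle\GL_n,\EA_n\rangle$, while $\EA_n\subseteq\SA_n$ because each $e_i(f)$ has unipotent triangular Jacobian. So the content is to show $\T_n(R)\cap\SA_n(R)\subseteq\EA_n(R)$.

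The first move is to trade the full linear group for its diagonal subgroup. Given $\alpha\in\GL_n(R)$ with $|J\alpha|=u\in R^*$ and letting $\delta_0\in\Di_n(R)$ be the automorphism with matrix $\operatorname{diag}(u,1,\ldots,1)$, the automorphism $\delta_0^{-1}\alpha$ lies in $\GL_n(R)\cap\SA_n(R)=\SL_n(R)=\E_n(R)$; hence $\GL_n(R)=\Di_n(R)\cdot\E_n(R)$, and since $\E_n\subseteq\EA_n$ this gives $\T_n(R)=\langle\Di_n(R),\EA_n(R)\rangle$. Next I would check, by a one-line computation, that $\Di_n(R)$ normalizes $\EA_n(R)$: conjugating $e_i(f)$ by the diagonal automorphism with matrix $\operatorname{diag}(d_1,\ldots,d_n)$ yields $e_i(g)$, where $g$ is obtained from $f$ by the substitution $X_j\mapsto d_j^{-1}X_j$ followed by multiplication by $d_i$ — still elementary in the $i$-th position. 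Since $\Di_n(R)$ normalizes $\EA_n(R)$, the product $\Di_n(R)\cdot\EA_n(R)$ is a subgroup; it contains $\Di_n$, $\E_n$ and $\EA_n$, hence $\GL_n(R)$, hence equals $\T_n(R)$. In particular every $\varphi\in\T_n(R)$ has the form $\varphi=\delta\rho$ with $\delta\in\Di_n(R)$, $\rho\in\EA_n(R)$.

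To finish, suppose in addition $\varphi\in\SA_n(R)$. Then $1=|J\varphi|=|J\delta|\cdot|J\rho|=|J\delta|$, so the diagonal matrix of $\delta$ has determinant one; by the Whitehead identity $\operatorname{diag}(u,u^{-1})\in\E_2(R)$ together with a telescoping argument, any diagonal matrix of determinant one lies in $\E_n(R)\subseteq\EA_n(R)$, whence $\delta\in\EA_n(R)$ and $\varphi=\delta\rho\in\EA_n(R)$, as required. For the final sentence, the equality $\SL_n(R)=\E_n(R)$ over a local ring is the classical Gaussian-elimination fact: a unimodular row over a local ring has a unit entry and can therefore be cleared by elementary row operations, reducing any matrix of determinant one to the identity.

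The step I expect to be the genuine obstacle — and the reason the statement carries the hypothesis $\SL_n(R)=\E_n(R)$ rather than being unconditional — is the passage from "$\varphi$ is a word in linear and elementary automorphisms" to the clean factorization $\varphi=\delta\rho$. A priori $\GL_n(R)$ need not normalize $\EA_n(R)$ non-stably (Lemma \ref{linconjelem} only delivers this after one stabilization), so it is essential first to replace $\GL_n$ by $\Di_n$ using the hypothesis; once that is done, normalization is the trivial computation above and everything else is routine.
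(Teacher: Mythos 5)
Your proof is correct and follows essentially the same route as the paper's: use the hypothesis $\SL_n(R)=\E_n(R)$ to write every element of $\GL_n(R)$ as a diagonal automorphism times a product of elementaries, use that conjugation by diagonals preserves elementarity to reach the factorization $\varphi=\delta\rho$ with $\delta$ diagonal of determinant one, and handle the local case by Gaussian elimination together with the Whitehead identity $\operatorname{diag}(u,u^{-1})\in\E_2(R)$. The only (harmless) variation is that you finish off the determinant-one diagonal via the Whitehead/telescoping identity rather than simply citing $\SL_n(R)=\E_n(R)$ again.
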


\begin{proof}  The first statement follows easily from these two facts:  (1) Any element of $\GL_n$ can be written as a product of a diagonal element times an element of  $\SL_n$, and (2) conjugating an elementary automorphism by a diagonal automorphism yields an elementary automorphism.

For the second statement one can use elementary operations to diagonalize (using the fact that $R$ is local) then use the fact (true for any ring $R$) that 
$$
\begin{pmatrix} u & 0\\ 0&u^{-1}\end{pmatrix}\in\E_2(R)
$$
when $u\in R^*$.
\end{proof}

In the lemma below, note that the element $t\in R$ is allowed to be a zero-divisor.

\begin{lemma}\label{loclem}  Let $t\in R$, and let $Z$ be an indeterminate.  Let $\psi,\phi\in\GA_n(R[Z])$, both $Z$-vanishing, such that $\psi_t=\phi_t$ in $\GA_n(R_t[Z])$.  Then for $N$ sufficiently large, $\psi(t^NZ)=\phi(t^NZ)$.
\end{lemma}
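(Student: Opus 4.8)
The plan is to pass to vector representations and reduce everything to a statement about coefficients. Write $\psi = X + F(Z,X)$ and $\phi = X + F'(Z,X)$ with $F,F'\in R[Z][X]^n$. The hypothesis that $\psi$ and $\phi$ are $Z$-vanishing (Definition \ref{vanishing}) says precisely that $F(0,X)=F'(0,X)=0$, i.e.\ $Z$ divides every entry of $F$ and of $F'$. So, expanding in powers of $Z$, I would write $F-F' = \sum_{k\ge1} Z^k C_k(X)$ with each $C_k(X)\in R[X]^n$ and all but finitely many $C_k$ equal to zero. The key feature to record is that the sum runs over $k\ge1$.

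Next I would translate the localization hypothesis. The equality $\psi_t=\phi_t$ in $\GA_n(R_t[Z])$ is the same as $F=F'$ in $R_t[Z][X]^n$, i.e.\ $F-F'$ lies in the kernel of the base-change map $R[Z][X]^n\to R_t[Z][X]^n$. That kernel consists of exactly those polynomial vectors all of whose (finitely many) $R$-coefficients are annihilated by some power of $t$; since there are only finitely many coefficients, there is a single integer $M\ge0$ with $t^M(F-F')=0$ in $R[Z][X]^n$. In particular $t^M C_k(X)=0$ for every $k\ge1$.

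Finally I would compute the effect of the substitution $Z\mapsto t^NZ$. By definition (see \ref{modgp}) $\psi(t^NZ)=X+F(t^NZ,X)$ and $\phi(t^NZ)=X+F'(t^NZ,X)$, and
$$F(t^NZ,X)-F'(t^NZ,X)=\sum_{k\ge1} t^{Nk}Z^k C_k(X)\,.$$
For $N\ge M$ and each $k\ge1$ we have $t^{Nk}C_k(X)=t^{Nk-M}\,\bigl(t^M C_k(X)\bigr)=0$, so the entire sum vanishes and $\psi(t^NZ)=\phi(t^NZ)$. Thus $N=M$ already suffices.

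As for the main obstacle: honestly there is no hard step — this is a short bookkeeping argument — but there is one point that must not be glossed over. The $Z$-vanishing hypothesis is exactly what forces the expansion of $F-F'$ to start at $k=1$, and it is this that lets the factor $t^{Nk}$ do its work. If the constant-in-$Z$ term $C_0(X)=(F-F')|_{Z=0}$ were allowed to be a nonzero $t$-torsion element, the substitution $Z\mapsto t^NZ$ would leave it untouched and the conclusion would be false; so the proof should make clear where $Z$-vanishing is used.
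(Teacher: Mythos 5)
Your proof is correct and follows essentially the same route as the paper, which simply cites the key fact that an element of $R$ vanishing in $R_t$ is killed by some power of $t$; you spell out the bookkeeping (finitely many coefficients give a uniform $M$, and $Z$-vanishing forces the $Z$-expansion of the difference to start in degree $1$ so the substitution $Z\mapsto t^NZ$ supplies the needed factor $t^{N}$). Nothing to add.
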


\begin{proof}  This is an easy consequence of the fact that if $a\in R$ goes to zero in $R_t$, then $t^Na=0$ in $R$ for some $N$.
\end{proof}

\begin{lemma}\label{linelemconj} Let $Z, T$ be indeterminates.  Let $\psi\in\GA_n^0(R)$ be such that either $\psi\in\GL_n(R)$ or $\psi $ is elementary.  Let $\varepsilon(Z)\in\EA_n^0(R[Z])$ be elementary and $Z$-vanishing (see Definition (\ref{vanishing})).  Then $(\psi\varepsilon(TZ) \psi ^{-1})^{[1]}$ is a finite product of $Z$-vanishing and $T$-vanishing elementary origin preserving automorphisms in $\EA_{n+1}^0(R[Z,T])$.
\end{lemma}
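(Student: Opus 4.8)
The plan is to reduce the statement to the two commutator formulas, Lemmas~\ref{linconjelem} and~\ref{elemconjelem}, after first rewriting $\varepsilon(TZ)$ in a form that forces each of those formulas to split its output cleanly into a ``$Z$-part'' and a ``$T$-part''. Write $\varepsilon(Z)=e_i(f(Z,X))$ with $f$ not involving $X_i$. Since $\varepsilon(Z)$ is $Z$-vanishing and origin preserving, $f(0,X)=0$ and $f(Z,0)=0$, so $f(Z,X)=Z\,f_1(Z,X)$ for some $f_1$ not involving $X_i$ and of zero constant term; hence $\varepsilon(TZ)=e_i\!\left(TZ\,f_1(TZ,X)\right)$. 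The whole argument turns on this: the defining polynomial of $\varepsilon(TZ)$ carries an explicit factor $T$, which we will feed to the commutator formulas as their scalar ``$b$'', while the complementary factor $Z\,f_1(TZ,X)$ carries an explicit factor $Z$.

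First suppose $\psi\in\GL_n(R)$, with matrix $\mathcal A$ and $i^{\text{th}}$ column $a$. I would apply Lemma~\ref{linconjelem} over the ring $R[Z,T]$ with $b=T$ and $f(X)=Z\,f_1(TZ,X)$, obtaining $(\psi\varepsilon(TZ)\psi^{-1})^{[1]}=\kappa\nu\kappa^{-1}\nu^{-1}$ with $\kappa=(X+a^{\text{t}}TY,Y)$ and $\nu=\left(X,\,Y+Z\,f_1(TZ,\mathcal A^{-1}X)\right)$. Here $\kappa=\prod_{k=1}^{n}e_k(a_kTY)$ is a product of $T$-vanishing origin preserving elementaries, $\nu=e_{n+1}\!\left(Z\,f_1(TZ,\mathcal A^{-1}X)\right)$ is a single $Z$-vanishing origin preserving elementary (origin preservation using $f_1(TZ,0)=0$), and the same is true of $\kappa^{-1}$ and $\nu^{-1}$. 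So $\kappa\nu\kappa^{-1}\nu^{-1}$ is already exhibited as a finite product of elementary origin preserving automorphisms in $\EA_{n+1}^0(R[Z,T])$, each $Z$-vanishing or $T$-vanishing.

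Next suppose $\psi=e_j(g)$ is elementary, with $g$ not involving $X_j$ and of zero constant term. If $j=i$ then $\psi$ commutes with $\varepsilon(TZ)$, so $(\psi\varepsilon(TZ)\psi^{-1})^{[1]}=\varepsilon(TZ)^{[1]}$ is itself a single $Z$-vanishing origin preserving elementary, and we are done. If $j\ne i$, I would first apply extension of scalars, absorbing $Z$, $T$ and every dimension variable except $X_i$ and $X_j$ into a base ring $R'$; then $\psi$ and $\varepsilon(TZ)$ become two-dimensional automorphisms over $R'$ in the variables $X_j$ (``position $1$'') and $X_i$ (``position $2$''), and they satisfy the hypotheses of Lemma~\ref{elemconjelem} with $b=T$, because $g\in R'[X_i]$ (it omits $X_j$) and $Z\,f_1(TZ,X)\in R'[X_j]$ (it omits $X_i$). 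That lemma then gives $(\psi\varepsilon(TZ)\psi^{-1})^{[1]}=\gamma\omega\gamma^{-1}\omega^{-1}$ with $$\gamma=(X_j+g(X_i+TY)-g(X_i),\,X_i+TY,\,Y),\qquad\omega=(X_j,\,X_i,\,Y+Z\,f_1(TZ,X)|_{X_j\mapsto X_j-g(X_i)}).$$ Just as before, $\gamma=e_i(TY)\circ e_j\!\left(g(X_i+TY)-g(X_i)\right)$ is a product of $T$-vanishing origin preserving elementaries, $\omega$ is a single $Z$-vanishing origin preserving elementary, and restricting scalars back --- which preserves the $Z$- and $T$-vanishing and origin preserving properties, and carries $\EA$ into $\EA$ --- presents $\gamma\omega\gamma^{-1}\omega^{-1}$ in the form required in $\EA_{n+1}^0(R[Z,T])$.

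The one place that needs care is the last case: one must check that after the extension of scalars the relevant polynomials genuinely lie in the one-variable subrings $R'[X_i]$, $R'[X_j]$ demanded by Lemma~\ref{elemconjelem}, and that origin preservation of each of the resulting factors holds back over $R[Z,T]$ --- i.e.\ upon setting \emph{all} dimension variables, including those absorbed into $R'$, to zero --- which in every case reduces to the vanishing of the constant terms of $g$ and of $f_1$ established at the outset. Beyond this bookkeeping there is nothing hard; it is precisely the factorization $\varepsilon(TZ)=e_i\!\left(TZ\,f_1(TZ,X)\right)$ that causes both commutator formulas to return a product of $Z$-vanishing and $T$-vanishing elementaries.
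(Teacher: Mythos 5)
Your proposal is correct and follows essentially the same route as the paper: the same case split (linear $\psi$ handled by the First Commutator Formula with $b=T$; elementary $\psi$ trivial in the same position, and otherwise reduced by extension of scalars to the Second Commutator Formula with $b=T$), the same extraction of the factors $T$ and $Z$ from $\varepsilon(TZ)$, and the same verification that each resulting elementary factor is origin preserving and either $Z$- or $T$-vanishing. The only cosmetic difference is the order in which you factor $\gamma$ into its two $T$-vanishing elementaries, which is immaterial.
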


\begin{proof}  We can write $\varepsilon(Z)=e_i(Zg(Z,X))$ with $g(Z,X)\in R[Z,\Xmi]$ for some $i\in\{1,\ldots,n\}$ and $g(Z,0)=0$.

First let us assume $\psi\in\GL_n(R)$.  Letting $\mathcal A$ denote the matrix of $\psi$ and $a$ the $i^{\text{th}}$ column of $\mathcal A$, we have, according to the First Commutator Formula, Lemma \ref{linconjelem} (with $T$ in the role of the lemma's $b$ and $R[Z,T]$ in the role of the lemma's $R$), $$
(\psi\varepsilon(TZ) \psi ^{-1})^{[1]}=\kappa\nu\kappa^{-1}\nu^{-1}
$$
where $\kappa=(X+a^\text{t}TY,Y)$, $\nu=(X,Y+Zg(TZ,\mathcal A^{-1}X))$.  (Here $Y$ represents the added dimension.)  Clearly $\nu$ is $Z$-vanishing and origin preserving, and $\kappa$ is the product of $T$-vanishing, origin preserving elementary automorphisms.

Now assume $\psi $ is elementary and origin preserving.  If $\psi $ and $\varepsilon(Z)$ are elementary in the same position, they commute, and hence $\psi\varepsilon(TZ) \psi ^{-1}=\varepsilon(TZ)$, which is $Z$-vanishing (and $T$-vanishing as well).  Otherwise all but two of the variables, say $X_3,\ldots,X_n$, are fixed by both, so we want to treat them as scalars and write $\psi =(X_1+f,X_2)$, $\varepsilon=(X_1,X_2+Zg)$.  The only problem is that $\phi$ and $\varepsilon$ may not be origin preserving as two-dimensional automorphisms, so let us record that $f\in R[X_2,\ldots,X_n]$, $g\in R[Z,X_1,X_3,\ldots,X_n]$ and that $f(0)=g(Z,0)=0$.  Then, again letting $Y$ be the variable representing the added dimension (and suppressing $X_3,\ldots,X_n$), the result follows from the Second Commutator Formula, Lemma \ref{elemconjelem}, which says that
$$(\psi\varepsilon(TZ)\psi ^{-1})^{[1]}=\gamma\omega\gamma^{-1}\omega^{-1}$$ where
\begin{align}\gamma&=(X_1+f(X_2+TY)-f(X_2),X_2+TY,Y)\notag\\
\omega&=(X_1,X_2,Y+Zg(TZ,X_1-f(X_2)))\notag\end{align}
(here $T$ plays the role of the lemma's $b$ and $Zg(TZ,X_1)$ is in the role of the lemma's $g(X_1)$) and the fact that 
\begin{equation}\label{both}\gamma=(X_1+T(T^{-1}(f(X_2)-f(X_2-TY))),X_2,Y)\circ(X_1,X_2+TY,Y)\,;
\end{equation}  
namely, we observe that $\omega$ is $Z$-vanishing and that the two elementary automorphisms in the factorization (\ref{both}) are $T$-vanishing, and that all three of these are origin preserving when considered as $(n+1)$-dimensional automorphisms in the full set of variables $\X,Y$.
\end{proof}

Crucial to our results will be the following result of Suslin, which is a reformulation of \cite{Sus}, Corollary~6.5. This will be used in the proof of Theorem \ref{locmod}.\footnote{The first statement of Theorem \ref{locmod} only needs the fact that $\GL_\infty(R^{[m]})=\left\langle\E_\infty(R^{[m]}),\GL_\infty(R)\right\rangle$, which is just the assertion that the map $\text{K}_1(R)\to\text{K}_1(R^{[m]})$ is an isomorphism.  This was proved by Bass, Heller, and Swan in \cite{BHS}.}

\begin{theo}[Suslin]\label{suslin} Let $R$ be a regular ring.  Then $$\GL_n(R^{[m]})=\left\langle\E_n(R^{[m]}),\GL_n(R)\right\rangle$$ for $n\ge\text{max}\,(3,2+\text{dim}\, R)$.
\end{theo}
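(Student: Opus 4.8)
The statement is, as indicated, a reformulation of Suslin's Corollary~6.5 in \cite{Sus}, so the plan is not to reprove it but to carry out the translation; the one genuine step is to reduce from $\GL_n$ to $\SL_n$, after which Suslin's result applies essentially verbatim. First I would record that a regular ring $R$ is reduced: regularity is a local property and a regular local ring is a domain, so every localization of $R$ at a prime is a domain, which forces $R$ to have no nilpotents. It follows that $R^{[m]}$ has the same units as $R$, since a unit of a polynomial ring over a reduced ring is necessarily a constant.

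Now let $\mathcal A\in\GL_n(R^{[m]})$. By the previous remark $u:=|\mathcal A|\in R^*$, so $\delta=\mathrm{diag}(u,1,\ldots,1)$ lies in $\GL_n(R)\subseteq\GL_n(R^{[m]})$ and $\delta^{-1}\mathcal A\in\SL_n(R^{[m]})$. Hence it suffices to prove $\SL_n(R^{[m]})\subseteq\langle\E_n(R^{[m]}),\GL_n(R)\rangle$ for $n\ge\text{max}\,(3,2+\text{dim}\,R)$, the reverse inclusion being trivial. I would then invoke Suslin's Corollary~6.5 in exactly this range: for $R$ regular and $n$ as above, $\E_n(R^{[m]})$ is normal in $\SL_n(R^{[m]})$ and every element of $\SL_n(R^{[m]})$ is the product of an element of $\E_n(R^{[m]})$ with a constant matrix from $\SL_n(R)$ --- equivalently, the comparison map $\mathrm{SK}_1(R)\to\mathrm{SK}_1(R^{[m]})$ is already surjective at the unstable level $n$. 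Combining this with the factorization $\mathcal A=\delta\,(\delta^{-1}\mathcal A)$ yields $\GL_n(R^{[m]})=\langle\E_n(R^{[m]}),\GL_n(R)\rangle$.

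The main point requiring care --- and essentially the only ``work'' in the argument --- will be matching the hypotheses of \cite{Sus} with those asserted here: that Suslin's notion of regular ring coincides with ours, that the bound in his Corollary~6.5 is indeed $n\ge\text{max}\,(3,2+\text{dim}\,R)$, and that the cited assertion is genuinely unstable in $n$ rather than merely a statement about $\GL_\infty$ or about $\mathrm{SK}_1$ in the limit; the latter, weaker form is due to Bass--Heller--Swan \cite{BHS} and would suffice only for the first assertion of Theorem~\ref{locmod}. Beyond this cross-referencing there is no genuine mathematical obstacle, the substance being entirely Suslin's.
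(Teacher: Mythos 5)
Your proposal is correct and matches the paper's treatment: the paper offers no proof of this statement, simply quoting it as a reformulation of Suslin's Corollary~6.5, and your explicit reduction from $\GL_n$ to $\SL_n$ (regular $\Rightarrow$ reduced $\Rightarrow$ units of $R^{[m]}$ lie in $R^*$, then factor out a diagonal matrix from $\GL_n(R)$) is exactly the routine translation that the word ``reformulation'' leaves implicit. Your closing caution about checking that Suslin's result is unstable in $n$, as opposed to the Bass--Heller--Swan $K_1$ statement, is also consistent with the paper's own footnote distinguishing the two.
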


\begin{rema}
For $R$ a polynomial ring over a field, an algorithmic proof of this theorem has been given in \cite{P-W}, making the proof of Theorem \ref{locmod} constructive in this case.
\end{rema}

\section{The Main Results}\label{mainresults}

The following theorem will be an important component in the proof of Theorem \ref{main}, but it is also of interest in its own right.  Some ideas from \cite{EMV} are employed.

\begin{theo}\label{modnil} Let $A$ be a ring, $I$ an ideal contained in the nilradical of $A$, $\bar A=A/I$. Let $\varphi \in \SA_n(A)$. If $\bar \varphi \in \EA_n(\bar A)$, then $\varphi$ is stably a composition of elementary automorphisms, i.e., $\varphi^{[m]} \in \EA_{n+m}(A)$ for some $m\geq0$. If $A$ is a $\Q$-algebra, then we have more strongly that $\varphi \in \EA_n(A)$.
\end{theo}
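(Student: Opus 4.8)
The plan is to peel $\varphi$ apart by a dévissage that reduces the whole statement to the square-zero case already handled by Proposition~\ref{theprop}. \emph{First}, since $\bar\varphi\in\EA_n(\bar A)$ and $A\to\bar A$ is surjective, item~\ref{lift} produces $\rho\in\EA_n(A)$ with $\bar\rho=\bar\varphi$. Because $\EA_n\subseteq\SA_n$, the automorphism $\rho^{-1}\varphi$ lies in $\SA_n(A)$ and maps to $\id$ in $\GA_n(\bar A)$; writing it as $X+H$, this says $H\in I[X]^n$. As $\varphi^{[m]}=\rho^{[m]}(\rho^{-1}\varphi)^{[m]}$ and $\rho^{[m]}\in\EA_{n+m}(A)$, it suffices to prove the conclusion for $\rho^{-1}\varphi$ (in the $\Q$-algebra case with $m=0$). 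So we may assume $\varphi=X+H$ with $H\in I[X]^n$.

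\emph{Second}, the polynomial vector $H$ has only finitely many coefficients, each of which lies in the nilradical and hence is nilpotent; the ideal $J\subseteq I$ they generate is therefore finitely generated by nilpotents, hence itself nilpotent, say $J^k=(0)$, and $H\in J[X]^n$. Since nothing below refers to $I$ again, we may replace $I$ by $J$ and assume $I^k=(0)$.

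\emph{Third}, I induct on $k$, uniformly over all rings $A$. If $k=1$ then $\varphi=\id$ and there is nothing to prove. If $k\ge2$, put $A'=A/I^{k-1}$, which is a $\Q$-algebra whenever $A$ is, and let $\varphi'\in\GA_n(A')$ be the image of $\varphi$. The image of $I$ in $A'$ is $I/I^{k-1}$, which has nilpotency index $\le k-1$, and $\varphi'\equiv\id$ modulo it; hence by the inductive hypothesis $(\varphi')^{[m']}\in\EA_{n+m'}(A')$ for some $m'\ge0$, with $m'=0$ in the $\Q$-algebra case. Using~\ref{lift} again, lift $(\varphi')^{[m']}$ to some $\rho'\in\EA_{n+m'}(A)$, and set $\psi=(\rho')^{-1}\varphi^{[m']}$. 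Then $\psi\in\SA_{n+m'}(A)$ and $\psi$ has trivial image over $A'$, so $\psi=X+H'$ with $H'\in I^{k-1}[X]^{n+m'}$. Since $(I^{k-1})^2\subseteq I^k=(0)$, Proposition~\ref{theprop} applies with the ideal $\ideaala=I^{k-1}$ and $\phi=\psi$: as $|J\psi|=1$, part~(1) gives $\psi^{[1]}\in\EA_{n+m'+1}(A)$, whence $\varphi^{[m'+1]}=(\rho')^{[1]}\psi^{[1]}\in\EA_{n+m'+1}(A)$; and when $A$ is a $\Q$-algebra, part~(2) gives $\psi\in\EA_n(A)$, whence $\varphi=\rho'\psi\in\EA_n(A)$. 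This closes the induction and proves the theorem.

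The substantive content — converting a Jacobian-one automorphism congruent to the identity modulo a square-zero ideal into a product of elementaries — is carried entirely by Proposition~\ref{theprop} (and, beneath it, Lemmas~\ref{linearcomb}, \ref{sumcomp}, \ref{monomial}); what remains is purely organizational. The one point requiring genuine care is the second step: when $A$ is not Noetherian its nilradical need not be a nilpotent ideal, so one cannot induct on ``the nilpotency index of $I$'' as it stands — one must first shrink $I$ to the finitely generated (hence nilpotent) ideal actually carrying the coefficients of $H$ before the induction can begin.
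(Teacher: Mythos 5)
Your proof is correct and follows essentially the same route as the paper: reduce to a finitely generated (hence nilpotent) ideal, induct on the nilpotency index, lift elementaries via \ref{lift}, and finish with Proposition \ref{theprop} applied to the square-zero ideal $I^{k-1}$. The only cosmetic difference is that you first lift $\bar\varphi$ and then shrink $I$ to the ideal generated by the coefficients of $H$, whereas the paper shrinks $I$ at the outset using the finitely many coefficients needed to express the hypothesis $\bar\varphi\in\EA_n(\bar A)$; both handle the non-Noetherian issue you correctly flag.
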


\begin{proof}
Since the assumption that $\bar \varphi \in \EA_n(\bar A)$ can be expressed using only finitely many coefficients in the ideal $I$, we
may assume that $I$ is finitely generated. Hence it is a nilpotent ideal, say $I^D=(0)$ for some integer $D\ge1$. We will prove by
induction on $D$ that $\varphi$ is a product of elementary automorphisms.

In the case $D=1$ we have $I=0$, so there is nothing to prove. So now let $D\geq2$ and let $\tilde A=A/I^{D-1}$ and $\tilde I=I/I^{D-1}$. Since $\tilde{\varphi}\in\SA_n(\tilde A)$, the induction hypothesis (applied to the ring $\tilde A$ and its ideal $\tilde I$) says that, for
some $r\ge0$, $\bar\varphi^{[r]}$ is a composition of elementary automorphisms, i.e., $\tilde\varphi^{[r]}\in\EA_{n+r}(\tilde A)$. We can
lift each of these elementary automorphisms to elementary automorphisms over $A$ (see \ref{lift}) to produce $\varepsilon\in\EA_{n+r}(A)$ such that $\varepsilon^{-1}\varphi^{[r]}=X+H$, where $X=(X_1,\ldots,X_{n+r})$ and $H=(H_1,\ldots,H_{n+r}) \in I^{D-1}[X]^{n+r}$.  

Let $\rho=\varepsilon^{-1}\varphi^{[r]}$.  Since $(I^{D-1})^2=0$ we can apply (1) of Proposition \ref{theprop}, which, since $|J(\rho)|=1$, asserts that $\rho^{[1]}\in\EA_{n+r+1}(A)$.  Hence $\varphi^{[r+1]}\in\EA_{n+r+1}(A)$ as well, and the first assertion is proved.

In the case $A$ is a $\Q$-algebra, we proceed as above, but the induction hypothesis gives that $\tilde\varphi$ itself is a
composition of elementary automorphisms, i.e., $\tilde{\varphi}\in\EA_n(\tilde A)$.  As before, we lift each of these to elementary automorphisms over $A$, compose them to form $\varepsilon\in\EA_n(A)$, and replace $\varphi$ by $\varepsilon^{-1}\varphi$.  We can thereby assume that $\varphi=(X_1+H_1,\ldots\!,X_n+H_n)$ with $H_1,\ldots\!,H_n\in I^{D-1}[X]$.  The conclusion $\varphi\in\EA_n(A)$ now follows from (2) of Proposition \ref{theprop}.

\end{proof}

\begin{rema}\label{charp}  A close look at the inductive argument in the above proof for the general case shows that the number of new dimensions needed is $D-1$, where $D$ is the smallest integer for which $I^D=0$.  However, the procedure could have been made more efficient had we taken $\bar A=A/I^E$ and $\bar I=I/I^E$ where $E$ is the round-up of $D/2$, which is all that is needed to insure $(\bar I^E)^2=0$.  Using this method the number of new variables needed would be approximately $\log_2 D$.
\end{rema}

Theorem \ref{modnil} has the following interesting corollary, which can be viewed as a generalization of Jung's Theorem:

\begin{theo} \label{dimzerotame}
Let $A$ be an Artinian ring and let $\varphi\in\SA_2(A)$. Then $\varphi$ is stably a composition of elementary automorphisms, i.e., $\varphi^{[m]}\in\EA_{2+m}(A)$ for some $m\ge0$.  If $A$ is a $\Q$-algebra, then we have more strongly that $\varphi\in\EA_2(A)$.
\end{theo}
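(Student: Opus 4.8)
The plan is to reduce the Artinian case to the nilpotent-ideal case handled by Theorem \ref{modnil}, using the Jung--Van der Kulk Theorem (Theorem \ref{JvdK}) to deal with the residue field quotients. Since $A$ is Artinian, it decomposes as a finite product $A = A_1 \times \cdots \times A_k$ of local Artinian rings, and by \ref{prod} the group $\SA_2(A)$ (and $\EA_2(A)$) decomposes compatibly, so we may assume $A$ is local Artinian with maximal ideal $\m$ and residue field $\kappa = A/\m$. Since $A$ is Artinian, $\m$ is nilpotent, hence $\m$ is precisely an ideal contained in (equal to) the nilradical, and $A/\m = \kappa$ is a field.

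The key steps, in order: First, pass to the residue field: $\bar\varphi \in \SA_2(\kappa) \subseteq \GA_2(\kappa) = \T_2(\kappa)$ by Theorem \ref{JvdK}. So $\bar\varphi$ is tame over the field $\kappa$; since moreover $|J\bar\varphi| = 1$, Proposition \ref{elem} (the field $\kappa$ being in particular local, so $\SL_2(\kappa) = \E_2(\kappa)$) gives $\bar\varphi \in \T_2(\kappa) \cap \SA_2(\kappa) = \EA_2(\kappa)$. Second, apply Theorem \ref{modnil} with $I = \m$ (contained in the nilradical of $A$, with $\bar A = \kappa$): since $\varphi \in \SA_2(A)$ and $\bar\varphi \in \EA_2(\kappa)$, we conclude $\varphi^{[m]} \in \EA_{2+m}(A)$ for some $m \ge 0$. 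In the case $A$ is a $\Q$-algebra, each factor $A_i$ is a local Artinian $\Q$-algebra, so the stronger conclusion of Theorem \ref{modnil} applies factorwise and yields $\varphi \in \EA_2(A)$ directly, with no new variables. Reassembling the product decomposition (again by \ref{prod}, noting that the number of stabilizing variables can be taken uniform by padding with the identity) completes the argument.

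I expect the only mild subtlety to be the bookkeeping in the product reduction: the integer $m$ produced by Theorem \ref{modnil} may differ across the factors $A_i$, so one takes $m$ to be the maximum and uses that $\EA_{2+m'}(A_i) \subseteq \EA_{2+m}(A_i)$ for $m' \le m$ (via stabilization \ref{stabdef}) to get a common $m$; then $\varphi^{[m]} \in \prod_i \EA_{2+m}(A_i) = \EA_{2+m}(A)$. This is routine. The genuine content is entirely imported: Theorem \ref{JvdK} supplies tameness over the residue field, Proposition \ref{elem} upgrades ``tame with trivial Jacobian'' to ``elementary'' over that field, and Theorem \ref{modnil} lifts elementarity through the nilpotent maximal ideal. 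There is no real obstacle beyond correctly invoking these three ingredients; in particular the $\Q$-algebra strengthening is immediate because Theorem \ref{modnil} already contains the corresponding strengthening.
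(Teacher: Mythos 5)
Your proposal is correct and uses essentially the same ingredients as the paper's proof: the Jung--Van der Kulk Theorem, the identity $\SA_2(k)=\EA_2(k)$ over a field, the product decomposition of \ref{prod}, and Theorem \ref{modnil}. The only (harmless) difference is ordering: the paper applies Theorem \ref{modnil} once to $A$ with $I$ the nilradical, decomposing only the reduced quotient $A/I$ into fields, which avoids the bookkeeping you need to uniformize the stabilization integer $m$ across the local factors.
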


\begin{rema}  Co-author Joost Berson has shown that the conclusion $\varphi\in\EA_2(A)$ does not hold if the hypothesis ``$A$ is a $\Q$-algebra" is removed.  He produces a counterexample when $A=\mathbb F_p[T]/(T^2)$, $p$ a prime.  This result appears in \cite{Ber} as Theorem 5.1.
\end{rema}

\begin{proof}  We apply the Theorem \ref{modnil} taking $I$ to be the nilradical of $A$.  In this case $A/I$ is a product of fields so the hypotheses is met by virtue of \ref{prod} and the Jung-Van der Kulk Theorem (Theorem \ref{JvdK}).  (It is an easy consequence of the latter that $\SA_2(k)=\EA_2(k)$ for $k$ a field.)
\end{proof}



\begin{theo} \label{locmod} Let $R$ be a regular domain, $t\in R-\{0\}$, and $\varphi\in\GA_n(R)$.  If $\varphi\in\T_n(R_t)$ and $\bar\varphi\in\EA_n(R/tR)$, then  $\varphi$ is stably tame.  If, more strongly, $\bar\varphi\in\EA_n(R/t^NR)$ for $N$ sufficiently large, then $\varphi^{[\ell]}\in\T_{n+\ell}(R)$, where $\ell=\text{max}\,(2+\text{dim}\, R, n+1)$.
\end{theo}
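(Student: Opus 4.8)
The plan is to trade the localization at $t$ for adjoined variables by means of the homomorphism $\Psi_{t^N}$, and then to straighten out the resulting automorphism with the commutator bookkeeping of Lemma \ref{shorten} and Suslin's theorem. First I would use the congruence hypothesis to reduce to the case $\psi\equiv\id\pmod{t^N}$. By hypothesis $\bar\varphi^{[m]}\in\EA_{n+m}(R/t^NR)$ for some $m\ge0$ (and $m=0$ under the stronger hypothesis); lifting these elementary factors to $R$ via \ref{lift} produces $\varepsilon\in\EA_{n+m}(R)$ with $\bar\varepsilon=\bar\varphi^{[m]}$, so $\psi:=\varepsilon^{-1}\varphi^{[m]}$ satisfies $\psi\equiv\id\pmod{t^N}$ and still lies in $\T_{n+m}(R_t)$. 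Since $\varphi^{[m]}=\varepsilon\psi$ with $\varepsilon$ tame over $R$, it is enough to prove $\psi$ stably tame over $R$. As $\psi$ and $\psi^{-1}$ are congruent to $\id$ modulo $t^N$, Proposition \ref{trick} (with $t^N$ in the role of $t$) puts $\Psi_{t^N}(\psi)$ in $\GA_{n+m}(R[X])$ and makes it elementarily equivalent over $R$ to $\psi^{[n+m]}$; so, regarding $R[X]=R^{[n+m]}$ through restriction of scalars, the problem reduces to showing $\Psi_{t^N}(\psi)$ stably tame over $R$.

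The second step is to compute $\Psi_{t^N}(\psi)$ from a \emph{fixed} tame factorization $\varphi=\rho_1\cdots\rho_s$ over $R_t$, each $\rho_j$ linear or elementary, whose length $s$, elementary-degrees and $t$-orders are data of $\varphi$ alone and are not altered by stabilizing to $\varphi^{[m]}=\rho_1^{[m]}\cdots\rho_s^{[m]}$; in particular they are independent of $N$. Writing $\Psi_{t^N}(\psi)=\Psi_{t^N}(\varepsilon)^{-1}\,\Psi_{t^N}(\varphi^{[m]})$ I would treat the two factors separately. Lemma \ref{sweepleft} gives $\Psi_{t^N}(\varepsilon)=\tau_\varepsilon\tilde\varepsilon$ with $\tilde\varepsilon\in\EA_{n+m}(R[X])$ and $\tau_\varepsilon$ a translation whose coefficient vector already lies in $R[X]^{n+m}$ (no cost depending on $N$). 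For $\Psi_{t^N}(\varphi^{[m]})=\prod_j\Psi_{t^N}(\rho_j^{[m]})$ I would run Lemma \ref{shorten} from right to left, absorbing one factor at a time (with the trivial linear or elementary slot according to the type of $\rho_j$), to reach, for all $N$ beyond some threshold $N_1$,
\[
\Psi_{t^N}(\varphi^{[m]})^{[1]}=(\tau_0\gamma_0)^{[1]}\,\zeta_0 ,
\]
with $\tau_0$ a translation, $\gamma_0\in\GL_{n+m}(R_t[X])$, and $\zeta_0\in\EA_{n+m+1}(R[X])$; and by Remark \ref{important} the $t$-orders and degrees of $\tau_0,\gamma_0$, as well as $N_1$, are bounded purely by the fixed data of $\rho_1,\dots,\rho_s$, because only $s$ steps occur. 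Merging $\tau_\varepsilon^{-1}$ into $\tau_0$ (translations commute and add), this yields
\[
\Psi_{t^N}(\psi)^{[1]}=(\tilde\varepsilon^{-1})^{[1]}\,(\tau_1\gamma_0)^{[1]}\,\zeta_0 .
\]

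Here is the decisive point. The left side lies in $\GA_{n+m+1}(R[X])$ — this is where $\psi\equiv\id\pmod{t^N}$ is used — and so do $(\tilde\varepsilon^{-1})^{[1]}$ and $\zeta_0$, whence $\tau_1\gamma_0\in\GA_{n+m}(R[X])$; comparing the $Z$-constant and $Z$-linear parts of this affine map forces $\tau_1\in\Tr_{n+m}(R[X])$ and then $\gamma_0=\tau_1^{-1}(\tau_1\gamma_0)\in\GL_{n+m}(R[X])$. Thus $\Psi_{t^N}(\psi)^{[1]}$ is a product of elements of $\EA_{n+m+1}(R[X])$ with a single linear factor $\gamma_0^{[1]}\in\GL_{n+m+1}(R[X])$. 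Stabilizing to dimension $d=\max(3,\,2+\dim R,\,n+m+1)$ and applying Suslin's Theorem \ref{suslin} — the only place the regularity of $R$ is invoked — rewrites this linear factor as a product of elements of $\E_d(R[X])$ and $\GL_d(R)$; since $\E_d(R[X])\subseteq\EA_d(R[X])$, we get $\Psi_{t^N}(\psi)^{[d-n-m]}\in\langle\EA_d(R[X]),\GL_d(R)\rangle$, which under restriction of scalars $\GA_d(R^{[n+m]})\hookrightarrow\GA_{n+m+d}(R)$ lands in $\langle\EA_{n+m+d}(R),\GL_{n+m+d}(R)\rangle=\T_{n+m+d}(R)$. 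Being elementarily equivalent over $R$ to $\psi^{[d]}$, it follows that $\psi^{[d]}\in\T_{n+m+d}(R)$, hence $\varphi^{[m+d]}=\varepsilon^{[d]}\psi^{[d]}\in\T_{n+m+d}(R)$, so $\varphi$ is stably tame. When $m=0$ this reads $\varphi^{[d]}\in\T_{n+d}(R)$ with $d=\max(3,2+\dim R,n+1)$, and $d=\max(2+\dim R,n+1)=\ell$ whenever $n\ge2$; the case $n=1$ is trivial since one-dimensional automorphisms are affine.

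The step I expect to be the real obstacle is the uniformity in $N$. A single $N$ must validate both the congruence hypothesis and all $s$ applications of Lemma \ref{shorten}, and there is an apparent circularity: the lift $\varepsilon$, and therefore any tame factorization of $\psi$ over $R_t$, grows more complicated as $N$ grows. The way around it is exactly the split $\Psi_{t^N}(\psi)=\Psi_{t^N}(\varepsilon)^{-1}\Psi_{t^N}(\varphi^{[m]})$ — the $\varepsilon$-part is swallowed by Lemma \ref{sweepleft} at no $N$-dependent cost — together with the fact (Remark \ref{important}) that the auxiliary translations and linear maps produced by Lemma \ref{shorten}, and their $t$-orders, never depend on $N$; so $N_1$ is governed only by the $N$-independent data of $\rho_1\cdots\rho_s$, and one simply takes $N$ above both $N_1$ and whatever renders $\bar\varphi^{[m]}$ elementary.
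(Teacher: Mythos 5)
Your proposal is correct and takes essentially the same route as the paper's proof: reduce to an automorphism congruent to the identity modulo $t^N$, apply $\Psi_{t^N}$ (Proposition \ref{trick}), sweep with Lemmas \ref{sweepleft} and \ref{shorten} using the $N$-independence noted in Remark \ref{important}, conclude that what remains is affine over $R[X]$, and finish with Suslin's Theorem \ref{suslin}, with the same dimension count $\ell$. Your only deviations (placing the lifted elementary correction on the left, seeding the Lemma \ref{shorten} iteration with the trivial translation, and merging the Lemma \ref{sweepleft} translation at the end via the explicit integrality argument) are cosmetic rearrangements of the paper's own bookkeeping.
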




\begin{proof}
Let $\varphi\in \GA_n(R)$ be as in the theorem. Letting $u=|J\varphi|$ and $\delta=(u^{-1}X_1,X_2,\ldots,X_n)$, we can replace $\varphi$ by $\delta^{-1}\varphi$ to arrange that $\varphi\in\SA_n(R)$.  Note that $\bar\varphi$ remains unchanged, since $\bar\delta=\id$.  With this preparation we note that the assumption $\bar\varphi\in\EA_n(R/tR)$ implies that $\bar\varphi\in\EA_\infty(R/t^NR)$ for $N\ge1$ (see \ref{dirlim} for notation).  This is a straightforward application of Theorem \ref{modnil}.  (Note:  We acknowledge the double usage of the symbol $\bar\varphi$, which now used for the image of $\varphi$ in $\GA_\infty(R/t^NR)$; the latter usage continues below.)

Since $\varphi\in\T_n(R_t)$ we can write $\varphi=\alpha_1\varepsilon_1\cdots\alpha_r\varepsilon_r$ for some $r$, where the $\varepsilon_1,\ldots,\varepsilon_r$ are elementary over $R_t$ and $\alpha_1,\ldots,\alpha_r\in\GL_n(R_t)$.  Write $\varepsilon_i=e_{j_i}(f_i)$ with $f_i\in R_t[X,\hat j_i]$.

Now choose a large $N$ and let $\bar R=R/t^NR$.  The size of $N$ will be determined later, but we will see that it depends only on the automorphisms $\alpha_1,\ldots,\alpha_r,\varepsilon_1,\ldots,\varepsilon_r$. We have arranged that $\bar\varphi\in\EA_\infty(\bar R)$, so after replacing $n$ by $n+p$ for some $p\ge0$ (depending on $N$), then by \ref{lift} we can lift $\bar\varphi^{-1}$ to an element $\rho\in\EA_n(R)$.  Letting $\phi=\varphi\rho$ we have $\bar\phi=\id$, and $\phi$ is elementarily equivalent to $\varphi$.  Now we apply the map $\Psi=\Psi_{t^N}$ of Proposition \ref{trick}, and note that, according to said proposition, $\Psi(\phi)$ is elementarily equivalent to $\phi^{[n]}$ in $\GA_{2n}(R)$.  Furthermore, since $\Psi$ is a homomorphism, we have $$\Psi(\phi)=\Psi(\alpha_1)\Psi(\varepsilon_1)\cdots\Psi(\alpha_r)\Psi(\varepsilon_r)\Psi(\rho)\,.$$   We note that, by extension of scalars, this factorization occurs in $\GA_n(R_t[X])$, where $X=(\X)$.  We will write $Z=(Z_1,\ldots,Z_n)$ for the new variables that have been introduced.

Our first step is to apply Lemma \ref{sweepleft} to get $\Psi(\rho)=\tau\tilde\rho$, with $\tilde\rho\in\EA_n(R[X])$ and $\tau\in\Tr_n(R_t[X])$ of the form $\tau=Z+(1/t^N)p(X)$, with $p(X)\in R[X]^n$.  We can now see that $\Psi(\phi)$ is elementarily equivalent to
\begin{equation}\label{r}\psi_r=\Psi(\alpha_1)\Psi(\varepsilon_1)\cdots\Psi(\alpha_r)\Psi(\varepsilon_r)\tau\,.\end{equation}  We now apply Lemma \ref{shorten} to the last three factors $\Psi(\alpha_r)\Psi(\varepsilon_r)\tau$ of (\ref{r}), with $\gamma=\id$.  This tells us that, with $N$ sufficiently large, $\psi_r^{[1]}$ is elementarily equivalent in $\GA_{n+1}(R[X])$ to $\psi_{r-1}^{[1]}$, where
\begin{equation}\psi_{r-1}=\Psi(\alpha_1)\Psi(\varepsilon_1)\cdots\Psi(\alpha_{r-1})\Psi(\varepsilon_{r-1})\tau_r\gamma_r\end{equation}
with $\tau_r=Z+(1/t^N)p_r(X)$ where $p_r(X)\in R_t[X]^n$, $\gamma_r\in\GL_n(R_t[X])$.  According to Lemma \ref{shorten}, evoking Example \ref{examp}, the required magnitude of $N$ depends only on the degree of $f_r(X)$ and the $t$-orders of $\alpha_r$ and $f_r$, since we know the $t$-order of $p(X)$ is zero\footnote{Of course $p(X)$ depends on $N$, but its $t$-order, zero, does not, and that is all that is needed for this step of the proof and to bound the $t$-orders of the polynomials $p_r(X)$ that appear subsequently in the proof.}.  Also the $t$-orders of $p_r(X)$ and $\gamma_r$ are bounded by a function of these same parameters.  Thus a sufficiently large choice of $N$ will suffice to apply Lemma \ref{shorten} to $\Psi(\alpha_{r-1})\Psi(\varepsilon_{r-1})\tau_r\gamma_r$ as well, since the magnitude of $N$, as well as $t$-orders of the resulting $p_{r-1}(X)$ and $\gamma_{r-1}$, will depend on the degree of $f_{r-1}(X)$ and the $t$-orders of $\alpha_{r-1}$, $f_{r-1}(X)$, $p_r(X)$, and $\gamma_r$.

Thus we continue to apply Lemma \ref{shorten} to conclude that $\Psi(\phi)$ is elementarily equivalent to $\tau_1\gamma_1$ with $\tau_1,\gamma_1$ as in the lemma.  A careful look at the hypothesis of Lemma \ref{shorten} reveals that at the beginning $N$ could be chosen large enough to suffice for each of these applications just by knowing $t$-orders of $\alpha_1,\ldots,\alpha_r,f_1,\ldots,f_r$ (see Remark \ref{important}).  Observe that the replacement of $n$ by $n+p$ was innocent, since it did not affect these $t$-orders.

We have shown that $\psi_r^{[1]}$, hence $\varphi^{[n+1]}$, is elementarily equivalent over $R$ to $(\tau_1\gamma_1)^{[1]}$, with $\tau_1\gamma_1$ lying in $\Af_n(R_t[X])\cap\GA_n(R[X])=\Af_n(R[X,Y])$.  In particular, $\tau_1$ is a translation over $R[X]$, and therefore $\varphi$ is elementarily equivalent to $\gamma_1\in\GL_n(R[X])$.  We may now appeal to Theorem \ref{suslin}, from which it follows that $\gamma_1$ becomes tame when $n\ge\max(3,2+\text{dim}\, R)$, establishing the first assertion.

For the second statement, note that under the stronger assumption $\bar\varphi\in\EA_n(R/t^NR)$,  the enlargement of $n$ was not required and we have needed no more than $\text{max}\,(3,2+\text{dim}\, R, n+1)$ added variables.  The 3, however, is redundant; for if $\text{dim}\, R=0$ then $R$ is a field and the assertion holds with $\ell=0$.  This concludes the proof.
\end{proof}

This tool furnishes an immediate proof of:

\begin{theo}\label{submain} Let $R$ be a Dedekind domain, and let $\varphi\in\GA_2(R)$.  Then $\varphi$ is stably tame.  If $R$ is also a $\Q$-algebra, then, $\varphi$ becomes tame with the addition of three more dimensions, i.e., $\GA_2(R)\subset\T_5(R)$.
\end{theo}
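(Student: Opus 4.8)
The plan is to read this off from Theorem~\ref{locmod} applied with $n=2$, using that a Dedekind domain is a regular domain of Krull dimension $\le1$ whose quotients by nonzero ideals are Artinian. If $R$ is a field the statement is immediate from the Jung-Van der Kulk Theorem (Theorem~\ref{JvdK}), so assume $\text{dim}\,R=1$. First I would reduce to the case $\varphi\in\SA_2(R)$: since $R$ is a domain, $u:=|J\varphi|$ is a unit of $R$, so the diagonal automorphism $\delta=(u^{-1}X_1,X_2)\in\GL_2(R)\subseteq\T_2(R)$ satisfies $\delta\varphi\in\SA_2(R)$; as $\delta$ is linear (hence tame) in every dimension, $\varphi$ is stably tame (resp.\ satisfies $\varphi^{[3]}\in\T_5(R)$) if and only if $\delta\varphi$ is (resp.\ does). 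So assume $\varphi\in\SA_2(R)$.

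Next I would produce a nonzero $t\in R$ with $\varphi\in\T_2(R_t)$. Over the fraction field $K$ of $R$ we have $\varphi\in\SA_2(K)=\EA_2(K)$, an easy consequence of Jung-Van der Kulk; writing $\varphi$ as a finite product of elementary automorphisms over $K$ and clearing the denominators of the finitely many coefficients occurring gives $t\in R-\{0\}$ with $\varphi\in\EA_2(R_t)\subseteq\T_2(R_t)$. It remains to verify the reduction-modulo-$t^N$ hypothesis of Theorem~\ref{locmod}. For every $N\ge1$ the ring $R/t^NR$ is Noetherian of Krull dimension $0$, hence Artinian, and $\bar\varphi\in\SA_2(R/t^NR)$; so Theorem~\ref{dimzerotame} gives $\bar\varphi\in\EA_\infty(R/t^NR)$ for all $N$. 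Thus the hypotheses of the first assertion of Theorem~\ref{locmod} hold, and $\varphi$ is stably tame.

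When $R$ is moreover a $\Q$-algebra, each $R/t^NR$ is an Artinian $\Q$-algebra, so the stronger half of Theorem~\ref{dimzerotame} yields $\bar\varphi\in\EA_2(R/t^NR)$ for all $N$, which is exactly the stronger hypothesis of Theorem~\ref{locmod} with $n=2$. That theorem then gives $\varphi^{[\ell]}\in\T_{2+\ell}(R)$ with $\ell=\text{max}\,(2+\text{dim}\,R,\,n+1)=3$, i.e.\ $\varphi^{[3]}\in\T_5(R)$; undoing the reduction to $\SA_2$ only reintroduces the linear factor $\delta^{[3]}$, so $\GA_2(R)\subseteq\T_5(R)$.

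I do not expect a genuine obstacle at this stage: all the work is inside Theorem~\ref{locmod} (and, through it, Suslin's Theorem~\ref{suslin} and Theorem~\ref{modnil}). The only points needing attention are the bookkeeping facts that a Dedekind domain modulo a nonzero ideal is Artinian --- which is what makes Theorem~\ref{dimzerotame} available for \emph{every} $N$, so that the hypothesis of Theorem~\ref{locmod} can be met even though the number of stabilizing variables needed to express $\bar\varphi$ as a product of elementaries may grow with $N$ --- and the evaluation $\ell=3$ coming from $\text{dim}\,R=1$.
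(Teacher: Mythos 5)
Your proposal is correct and follows essentially the same route as the paper: reduce to $\SA_2(R)$, use Jung--Van der Kulk over the fraction field to get $\varphi\in\T_2(R_t)$ for some $t\ne0$, invoke Theorem \ref{dimzerotame} on the Artinian rings $R/t^NR$ to meet the hypothesis of Theorem \ref{locmod} (with the stronger $\EA_2$ conclusion in the $\Q$-algebra case), and read off $\ell=3$. Your extra bookkeeping (the diagonal-automorphism reduction, the separate field case, the dependence on $N$) just makes explicit what the paper leaves implicit.
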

\begin{rema}  The more general assertion of Theorem \ref{submain} is contained in Theorem \ref{main} below.  However the sharper statement for Dedekind $\Q$-algebras is not.
\end{rema}
\begin{proof} We may assume $\varphi\in\SA_2(R)$.  By the Jung-Van der Kulk Theorem $\varphi$ becomes tame when we make the base change from $R$ to its field of fractions, so clearly $\varphi_t\in\T_2(R_t)$ for a well-chosen $t\in R$, $t\ne 0$.  For $N\ge1$ $\bar R=R/t^NR$ is an Artinian ring, so according to Theorem \ref{dimzerotame}, $\bar\varphi^{[m]}\in\EA_{2+m}(\bar R)$ for some $m\ge0$, and if $R$ is a $\Q$-algebra we can take $m=0$.  Now we apply Theorem \ref{locmod}.  In the case $R$ is a $\Q$-algebra the stronger hypothesis of Theorem \ref{locmod} holds, with $n=2$, so the $\ell$ of Theorem \ref{locmod} is $3$.  Thus the proof is complete.
\end{proof}

\begin{rema}\label{eucl}  If $R$ is a Euclidean domain we have $\SL_n(R)=\E_n(R)$ for all $n\ge1$.  (Caution: Not all Dedekind domains -- in fact, not all PIDs -- have this property.  See \cite{Gray}.)   From this it easily follows that $\GL_n(R)=\langle\,\E_n(R),\Di_n(R)\,\rangle$.  Since $\E_n(R)\subset\EA_n(R)$ we can also conclude that $T_n(R)=\langle\,\EA_n(R),\Di_n(R)\,\rangle$.   Taking $R=k[T]$, $k$ a field, this observation together with Theorem \ref{submain} imply that elements of $\GA_2(k[T])$, viewed as automorphism over $k$ by restriction of scalars, are stably tame \underbar{over $k$}.  If $k$ has characteristic zero, we have, more strongly, $\GA_2(k[T])\subset\T_6(k)$.
\end{rema}

The following derives immediately from Remark \ref{eucl}:

\begin{corol} \label{cor} Let $k$ be a field and let $W$ be the subgroup of $\GA_3(k)$ generated by all automorphisms which fix one coordinate.  Then all elements of $W$ are stably tame.  If $k$ has characteristic zero we have, more precisely, $W\subset\T_6(k)$.
\end{corol}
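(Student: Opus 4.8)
The plan is to reduce the assertion to Remark~\ref{eucl}, using only two elementary facts: a finite product of stably tame automorphisms is again stably tame, and $\T_6(k)$ is a subgroup of $\GA_6(k)$, hence closed under products and under conjugation by its own elements.

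First I would treat a single generator $\psi\in\GA_3(k)$ that fixes one of the three coordinates. Applying a permutation of the three variables --- a linear, hence tame, automorphism lying in $\GL_3(k)$ --- one may conjugate $\psi$ so that it fixes the first variable $T$; call the result $\psi'$. Conjugation by a permutation matrix preserves stable tameness, and preserves membership in $\T_{3+m}(k)$ after stabilizing (the matrix remaining in $\GL_{3+m}(k)\subset\T_{3+m}(k)$), so it suffices to treat $\psi'$. But $\psi'$, fixing $T$, lies in the subgroup $\GA_2(k[T])$ of $\GA_3(k)$ obtained by restriction of scalars, and Remark~\ref{eucl} says precisely that every such automorphism, viewed over $k$, is stably tame over $k$, and in fact lies in $\T_6(k)$ when $k$ has characteristic zero. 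Hence the same holds for $\psi$.

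Now let $\theta=\psi_1\cdots\psi_s\in W$ be arbitrary, each $\psi_j$ a generator as above. Since each $\psi_j$ is stably tame and there are finitely many of them, one may choose $m\ge0$ with $\psi_j^{[m]}\in\T_{3+m}(k)$ for all $j$; then $\theta^{[m]}=\psi_1^{[m]}\cdots\psi_s^{[m]}\in\T_{3+m}(k)$, so $\theta$ is stably tame, proving the first assertion. In characteristic zero the previous paragraph lets us take $m=3$ uniformly over the generators, and since $\T_6(k)$ is a group, $\theta^{[3]}=\psi_1^{[3]}\cdots\psi_s^{[3]}\in\T_6(k)$; that is, $W\subset\T_6(k)$.

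I do not expect a genuine obstacle here --- the corollary really does follow at once from Remark~\ref{eucl}. The one point deserving attention is the bookkeeping of stabilization: one must pass all of the finitely many generators to a common dimension before multiplying them, and, for the sharp characteristic-zero statement, that common dimension must not exceed $6$. This last uniformity is exactly what the inclusion $\GA_2(k[T])\subset\T_6(k)$ of Remark~\ref{eucl} provides, so nothing more is needed.
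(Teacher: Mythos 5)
Your argument is correct and is essentially the paper's own proof: the corollary is stated there as an immediate consequence of Remark~\ref{eucl}, and your write-up simply makes explicit the routine steps (permuting the fixed coordinate by an element of $\GL_3(k)$, and stabilizing the finitely many generators to a common dimension, which in characteristic zero is dimension $6$ uniformly). Nothing further is needed.
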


Note that $W$ properly contains the tame subgroup $\T_3(k)$, as Nagata's example lies in $W$ but is not tame.  It is not known whether $W$ is all of $\GA_3(k)$.

We will now state and prove our main result:

\begin{theo}[Main Theorem]\label{main0} Let $R$ be a regular ring, $\varphi\in\GA_2(R)$.  Then $\varphi$ is stably tame.
\end{theo}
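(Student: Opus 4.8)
The plan is to deduce the Main Theorem from a point-wise criterion over $\mathrm{Spec}\,R$ (of the sort advertised in the abstract; cf.\ Theorem~\ref{main}), the point-wise hypothesis being supplied in dimension two by the Jung--Van der Kulk Theorem. First I would normalize the Jacobian: a regular ring is reduced, so $|J\varphi|\in R^{*}$, and composing $\varphi$ with the tame linear automorphism $(|J\varphi|^{-1}X_{1},X_{2})$ reduces us to $\varphi\in\SA_{2}(R)$. Next, since stable tameness is a local property (Theorem~\ref{loc}), it suffices to make $\varphi$ stably tame after base change to each localization $R_{\m}$, $\m$ a maximal ideal; so we may assume $R$ is a regular local ring, in particular a regular domain, with fraction field $K$. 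By the Jung--Van der Kulk Theorem (Theorem~\ref{JvdK}), $\varphi\in\T_{2}(K)$, hence, clearing denominators, $\varphi\in\T_{2}(R_{t})$ for some nonzero $t\in R$.

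I would then argue by induction on $d=\dim R$. If $d=0$, then $R$ is a field and $\varphi\in\SA_{2}(R)=\EA_{2}(R)$ by Theorem~\ref{JvdK}. If $d=1$, then $R/t^{N}R$ is Artinian, so Theorem~\ref{dimzerotame} gives that $\bar\varphi$ is stably a composition of elementaries, and Theorem~\ref{locmod} then yields that $\varphi$ is stably tame; this recovers Theorem~\ref{submain}. For $d\ge 2$ the object is again to verify the hypothesis ``$\bar\varphi\in\EA_{\infty}(R/t^{N}R)$ for $N$ large'' of Theorem~\ref{locmod}. Writing $A=R/t^{N}R$ and letting $I$ be its nilradical, one has $A/I=R/\sqrt{tR}$, a reduced ring of Krull dimension $d-1$; I would show that the image of $\varphi$ in $\SA_{2}(A/I)$ is stably a composition of elementaries -- this is where the inductive hypothesis is brought in, routed through the residue fields of $A/I$ (over which the two-dimensional case is handled outright by Theorem~\ref{JvdK}) and re-globalized via Theorem~\ref{loc} -- and then Theorem~\ref{modnil}, applied to $A$ and its nilpotent ideal $I$, upgrades this to $\bar\varphi\in\EA_{\infty}(A)$. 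Theorem~\ref{locmod} now gives that $\varphi$ is stably tame over $R$, completing the induction, and with it the proof.

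The hard part is this inductive descent. The obstruction is that the ``smaller'' ring $R/\sqrt{tR}$ appearing at each step has dimension $d-1$ but is in general \emph{not} regular, so the induction cannot literally be run over the class of regular rings; the remedy is to take as the statement proved by induction the \emph{point-wise} assertion -- an automorphism over a regular ring that is stably tame at every point of its spectrum is stably tame -- whose hypothesis descends automatically to the residue fields of any quotient, where Theorem~\ref{JvdK} leaves nothing to verify in dimension two. One must also take care to propagate ``stably a composition of \emph{elementary} automorphisms'', not merely ``stably tame'', through the reductions, which is precisely why Theorem~\ref{modnil} (and, at the base, the fact that every element of $\SA_{2}$ over a field or an Artinian ring is stably elementary) is indispensable. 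A second, genuinely essential difficulty lies inside Theorem~\ref{locmod}: at the end of that argument an element of $\GL_{n}(R[X])$, for $n$ large, must be recognized as tame over $R$, which is a $K$-theoretic statement; this is exactly where Suslin's Theorem~\ref{suslin}, and with it the full force of the regularity of $R$, enters. It also explains why the result necessarily fails for non-reduced rings, over which $|J\varphi|$ can lie outside the base ring.
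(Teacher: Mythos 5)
Your overall architecture --- reduce to $\SA_2$, localize via Theorem \ref{loc} and Remark \ref{locstab}, induct on $d=\dim R$, and finish with Theorem \ref{modnil}, Theorem \ref{locmod} and Suslin --- is the paper's, and your cases $d\le 1$ are exactly the argument of Theorem \ref{submain}. But the inductive step for $d\ge 2$ has a genuine gap. You choose $t$ by clearing denominators in a Jung--Van der Kulk factorization over the fraction field, and you then must supply the hypothesis $\bar\varphi\in\EA_\infty(R/t^NR)$ of Theorem \ref{locmod}. Your plan is to get stable elementarity of the image of $\varphi$ over $A/I=R/\sqrt{tR}$ by ``routing through the residue fields and re-globalizing via Theorem \ref{loc}''; this does not work. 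Theorem \ref{loc} needs local (stable) tameness over the local rings $(A/I)_{\mathfrak p}$, and passing from stable tameness over the residue fields $k(\mathfrak p)$ to stable tameness over these local rings is precisely the point-wise criterion (Theorem \ref{main1}) you are in the middle of proving --- and that criterion is only available over \emph{regular} rings, because its proof runs through Theorem \ref{locmod} and hence through Suslin's Theorem \ref{suslin}. Since $R/\sqrt{tR}$ is in general not regular, your proposed inductive statement simply does not apply to it; your stated ``remedy'' (that the point-wise hypothesis descends to residue fields of any quotient) addresses the hypothesis but not the regularity requirement on the base ring, so the step is circular/unsupported. No tool in the paper gives stable tameness, let alone stable elementarity, over an arbitrary reduced local quotient of a regular local ring, and the paper explicitly declines to relax regularity.

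The paper avoids this by not insisting on an element $t$ for which $\varphi\in\T_2(R_t)$ outright. Instead it takes $a$ to be part of a regular system of parameters of the regular local ring $R$, so that \emph{both} $R_a$ and $\bar R=R/aR$ are again regular of dimension $d-1$. Induction (with Theorem \ref{loc} and Remark \ref{locstab}) then gives that $\varphi_a$ is stably tame over $R_a$ and that $\bar\varphi$ is stably tame over $\bar R$; since $\bar R$ is local, Proposition \ref{elem} converts the latter into membership in $\EA$ stably, Theorem \ref{modnil} (with $A=R/a^NR$, $I=aA$, $\bar A=\bar R$) handles the nilpotents, and Theorem \ref{locmod} is applied after stabilizing (its hypothesis ``$\varphi\in\T_n(R_a)$'' is only needed up to stabilization, which is exactly what the induction supplies). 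In this scheme the Jung--Van der Kulk Theorem is not invoked at the local ring at all: it is absorbed into the point-wise hypothesis over residue fields, which is where your two-dimensional input belongs. If you want to keep your $t$ coming from JvdK over the fraction field, you would need the Main Theorem over the non-regular ring $R/\sqrt{tR}$, which is not available; switching to a regular parameter is the missing idea.
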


The Main Theorem is an immediate consequence from the following, thanks to the Jung-Van der Kulk Theorem.

\begin{theo}[First General Form]\label{main} For a fixed integer $n\ge 2$ assume it is true that for all fields $k$ all elements of $\GA_n(k)$ are stably tame.  Then the same is true replacing ``field" by ``regular ring".
\end{theo}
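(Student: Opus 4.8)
The plan is to reduce the general regular-ring case to the hypothesis over fields by a localization-and-induction argument, exactly in the spirit of the tools assembled in \S\ref{prelims}. Let $R$ be a regular ring and $\varphi\in\GA_n(R)$; I want to show $\varphi$ is stably tame. First I would reduce to the case $R$ is a regular \emph{domain}: by \ref{prod} a finite product of rings splits the problem factorwise, and a general regular ring need not be a domain, so the genuine reduction step here is to pass to irreducible components of $\mathrm{Spec}\,R$ — i.e.\ to work with the quotients $R/\mathfrak{p}$ for minimal primes $\mathfrak p$, which are regular domains, together with a nilpotent-extension argument to climb back up. Concretely, one should peel off the nilradical $I$ first: over $\bar R=R/I$ (which embeds in the product of the $R/\mathfrak p$) handle $\varphi$, then lift using Theorem \ref{modnil} (after first reducing to $\varphi\in\SA_n(R)$, which one may do since the Jacobian determinant of a stably tame automorphism is the determinant of a product of elementaries and linears, hence a unit, and one can absorb a diagonal/linear factor). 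So the crux is: prove the theorem for $R$ a regular domain.

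For a regular domain $R$, I would argue by induction on $\dim R$. The base case $\dim R=0$ means $R$ is a field, which is precisely the hypothesis. For the inductive step, let $d=\dim R\ge 1$. By the field hypothesis applied to the fraction field $K$ of $R$, $\varphi_K$ is stably tame over $K$; since stable tameness over $K$ involves only finitely many denominators, there is $t\in R$, $t\ne 0$, with $\varphi_t$ stably tame over $R_t$ — that is, $\varphi^{[s]}\in\T_{n+s}(R_t)$ for some $s\ge0$. Replacing $\varphi$ by $\varphi^{[s]}$ and $n$ by $n+s$, we may assume $\varphi\in\GA_n(R)$ with $\varphi\in\T_n(R_t)$. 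Now the ring $R/t^N R$ has Krull dimension $\le d-1$ (cutting by a nonzerodivisor in a domain drops dimension), so I would like to apply the inductive hypothesis to it; but $R/t^N R$ need not be regular. This is the main obstacle. The fix is that Theorem \ref{locmod} only needs $\bar\varphi\in\EA_\infty(R/t^N R)$ for large $N$, and this \emph{stable} statement about $\EA_\infty$ over the (possibly non-regular, even non-reduced) ring $R/t^N R$ is exactly what Theorem \ref{modnil} is designed to supply once we know it over $(R/t^NR)_{\mathrm{red}}$. And $(R/t^NR)_{\mathrm{red}}$ is a finite product (over minimal primes, via \ref{prod}) of rings each of dimension $\le d-1$ — but again these are domains, not necessarily regular, so the field hypothesis does not directly apply to them either; we need stable tameness over these lower-dimensional \emph{reduced} rings.

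This circularity forces a slightly stronger induction. I would instead prove, by induction on $d$, the statement: \emph{for every reduced ring $R$ all of whose residue fields satisfy the $\GA_n$-field-hypothesis, with $\dim R\le d$, every $\varphi\in\GA_n(R)$ is stably tame, and moreover $\bar\varphi\in\EA_\infty(\bar R)$ for the relevant quotients.} Wait — that is not quite enough either, since $R/t^NR$ carries nilpotents. The clean formulation, which I believe is what the authors intend and which I would carry out, is: reduce via Theorem \ref{modnil} and \ref{prod} to the case $R$ is a regular domain and then induct on $\dim R$, at each stage using that $R/t^NR$ has smaller dimension and feeding $(R/t^NR)_{\mathrm{red}}$ — a product of domains of smaller dimension, each a quotient of a regular domain hence itself a regular domain after possibly further reduction is \emph{not} automatic, so one genuinely needs the field hypothesis to propagate to all quotient domains appearing. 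The resolution is that it suffices to know stable tameness over all these rings, and stable tameness over a domain $S$ of dimension $<d$ reduces, by the same localization trick, to stable tameness over the fraction field of $S$ (a field, covered by the hypothesis) plus stable tameness over $S/uS$ of dimension $<d-1$; so the induction on dimension does close, with Theorem \ref{locmod} (which is where regularity of the \emph{original} $R$ is consumed, via Suslin's Theorem \ref{suslin}) applied at the top level and Theorem \ref{modnil} bridging the nilpotent quotients at every level. The main obstacle, to reiterate, is precisely this non-regularity of $R/t^N R$, and the whole architecture of \S\ref{prelims} — especially the $\Psi_t$ homomorphism, Lemmas \ref{sweepleft} and \ref{shorten}, and Theorem \ref{modnil} — exists to route around it; I expect the write-up to invoke Theorem \ref{locmod} as the single decisive step after the dimension induction has set everything up.
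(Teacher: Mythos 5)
There is a genuine gap, and it sits exactly where you flag it yourself: your induction never closes. Your plan produces, from the field hypothesis applied to the fraction field, an element $t$ with $\varphi_t\in\T_n(R_t)$ (after stabilizing), and you then need $\bar\varphi\in\EA_\infty(R/t^NR)$ to feed Theorem \ref{locmod}. But $R/t^NR$ and its reduction are quotients by an essentially arbitrary $t$, so they need not be regular, and your proposed repair is circular: "the same localization trick" applied to a lower-dimensional domain $S$ appearing in $(R/t^NR)_{\mathrm{red}}$ again needs Theorem \ref{locmod} over $S$, whose hypothesis is that $S$ be a \emph{regular} domain (that is where Suslin's Theorem \ref{suslin} enters). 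Regularity is consumed at every level of such a recursion, not only "at the top level" as you assert. There is a second, independent gap: even if you knew $\bar\varphi$ were stably \emph{tame} over the reduced quotient, Theorems \ref{modnil} and \ref{locmod} require $\bar\varphi$ to be (stably) a product of \emph{elementary} automorphisms. Converting tame $\cap\ \SA$ into $\EA$ uses Proposition \ref{elem}, i.e.\ $\SL=\E$ over the ring in question; this holds over local rings but not over the general quotient domains your reduction produces, and you never address it. (Also, a regular ring is reduced, so the opening step of "peeling off the nilradical" of $R$ is vacuous.)

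The paper's proof routes around both problems with two ingredients absent from your proposal. First, it proves and uses the Localization Theorem \ref{loc} (stable tameness is a local property, via Remark \ref{locstab} and Lemmas \ref{2cover}, \ref{cong}, \ref{final}); this, together with the pointwise formulation Theorem \ref{main1}, reduces everything to the case of a regular \emph{local} ring $R$. Second, in the induction on $d=\dim R$ it chooses $a$ to be part of a regular system of parameters, so that $\bar R=R/aR$ is again a regular local ring of dimension $d-1$: the quotient stays regular and local, so the inductive hypothesis gives $\bar\varphi$ stably tame, Proposition \ref{elem} (valid since $\bar R$ is local) upgrades this to stably elementary, Theorem \ref{modnil} handles the nilpotents in $R/a^NR$, and the inductive hypothesis applied to the localizations of $R_a$ (all regular local of dimension $<d$) plus Theorem \ref{loc} gives $\varphi_a$ stably tame; then Theorem \ref{locmod} concludes. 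Your instinct that \ref{locmod} is the decisive final step is right, but without the reduction to the local case and the regular-system-of-parameters choice of $a$, the hypotheses of \ref{locmod} and \ref{modnil} cannot be verified, and the argument as proposed does not go through.
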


This, in turn, follows from the theorem below,\footnote{The Second General form of the Main Theorem is striking analogous to the main result of \cite{Asa} (Corollary 3.5), which asserts that if $R$ is a regular ring and $A$ is a finitely generated flat $R$-algebra such that $A\otimes k(\mathscr P)\cong k(\mathscr P)^{[n]}$ for all $\mathscr P\in \text{Spec}\,(R)$, then $A$ is stably isomorphic to the symmetric algebra $S_R(P)$ for some projective $R$-aqlgebra $P$.} which employs the following notation: For $R$ a ring and $\mathscr P\in\text{Spec}\,(R)$ we write $k(\mathscr P)$ for the residue field $R_\mathscr P/\mathscr P R_\mathscr P$.  For $\varphi\in\GA_n(R)$, we write $\overline\varphi_\mathscr P$ for the image of $\varphi$ in $\GA_n(k(\mathscr P))$.

\begin{theo}[Second General form]\label{main1}  Let $R$ be a regular ring, $\varphi\in\GA_n(R)$.  Assume $\overline\varphi_\mathscr P$ is stably tame in $\GA_n(k(\mathscr P))$  for all $\mathscr P\in \text{Spec}\,(R)$.  Then $\varphi$ is stably tame.
\end{theo}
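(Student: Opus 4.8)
\textbf{Proof proposal for Theorem \ref{main1}.} The plan is to reduce the global statement to the local-at-a-prime statement by a standard noetherian compactness argument, and then feed the local data into Theorem \ref{locmod}. First I would reduce to the case $\varphi\in\SA_n(R)$: the Jacobian determinant $|J\varphi|$ is a unit $u\in R^*$, and composing $\varphi$ with the linear automorphism $\mathrm{diag}(u^{-1},1,\ldots,1)$ produces an element of $\SA_n(R)$ which is stably tame if and only if $\varphi$ is; since $|J\varphi|$ remains a unit after any base change, the hypothesis on the fibres is preserved. Next, observe that ``$\overline\varphi_{\mathscr P}$ is stably tame'' is, after clearing denominators and using that $\GA_n$ commutes with localization, a statement that already holds over some localization $R_f$ with $f\notin\mathscr P$: if $\overline\varphi_{\mathscr P}^{[m]}=\alpha_1\varepsilon_1\cdots\alpha_r\varepsilon_r$ in $\GA_{n+m}(k(\mathscr P))$, each $\alpha_i\in\GL_{n+m}$ and each coefficient of each $\varepsilon_i$ lies in $k(\mathscr P)=\varinjlim R_f/\mathscr P R_f$, so the equation, together with the matrix identities witnessing $\alpha_i\alpha_i^{-1}=\mathrm{id}$, descends to $\GA_{n+m}(R_f/\mathscr P R_f)$ and then, since $\varphi^{[m]}$ already lies in $\GA_{n+m}(R_f)$, to $\GA_{n+m}(R_g/t^N R_g)$-type quotients after a further localization. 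The upshot is: for every $\mathscr P$ there is $f=f_{\mathscr P}\notin\mathscr P$ such that $\varphi$ becomes tame in $\GA_\infty(R_f)$. By quasi-compactness of $\mathrm{Spec}\,R$, finitely many such $f_1,\ldots,f_s$ generate the unit ideal.

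Having covered $\mathrm{Spec}\,R$ by finitely many basic opens $D(f_j)$ on which $\varphi$ is stably tame, I would then glue using Theorem \ref{locmod}, proceeding by induction on $s$. With $s=1$ there is nothing to do. For the inductive step, set $t=f_1$ and let $I=\sqrt{tR}$; the complement $V(t)=\mathrm{Spec}\,R/I$ is covered by $D(f_2),\ldots,D(f_s)$ restricted to it, i.e.\ by $s-1$ basic opens of the regular ring $R/\mathrm{(\,nil\,reduction\ suitable\ here)}$ — here I must be a little careful, because $R/t^NR$ is not regular, so the induction hypothesis cannot be applied to it directly. Instead the right move is: the hypothesis gives $\varphi\in\T_\infty(R_t)$, and I need the second input of Theorem \ref{locmod}, namely $\overline\varphi\in\EA_\infty(R/t^NR)$ for large $N$. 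To obtain this, note that $\mathrm{Spec}\,R/t^NR$ is covered by the $D(\bar f_j)$, $j\ge 2$; since the question of membership in $\EA_\infty$ over $R/t^NR$ is itself local on the spectrum and $\EA$ satisfies the product decomposition \ref{prod} and the localization compatibility, an induction on the number of covering opens — using Theorem \ref{locmod} again at each stage with a smaller ring — yields $\overline\varphi\in\EA_\infty(R/t^NR)$. Then Theorem \ref{locmod}, applied to $\varphi\in\GA_n(R)$ (after reducing to the domain case componentwise, or noting that the proof of \ref{locmod} only uses that $t$ is a non-zero-divisor and $R$ is regular), gives that $\varphi$ is stably tame.

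A cleaner way to organize the last paragraph, which I would actually adopt, is the following two-step descent. Step one: reduce to the case $R$ \emph{local}. By the finite cover $D(f_1),\ldots,D(f_s)$ and an induction exactly as above, it suffices to treat each $R_{f_j}$; iterating, one reduces to showing that if $\varphi$ is stably tame over $R_f$ for every $f$ in a finite set generating the unit ideal, then it is stably tame over $R$, and by the primality of the situation this reduces to $R$ having a single closed point, i.e.\ to the case where $\mathrm{Spec}\,R$ has no proper open cover refining our data — but that forces one of the $D(f_j)$ to be all of $\mathrm{Spec}\,R$. Step two: once the base is local with maximal ideal $\mathfrak m$, the residue field $k=k(\mathfrak m)$ is the relevant fibre, so by hypothesis $\overline\varphi_{\mathfrak m}$ is stably tame over $k$; choosing $t\in\mathfrak m$ nonzero with $\varphi$ stably tame over $R_t$ (possible by the generic fibre argument since $k(\mathfrak m)=R/\mathfrak m$ and the $\mathfrak m$-fibre is covered by inverting something outside $\mathfrak m$, while $R_t$ corresponds to a neighborhood of the generic point) and noting $R/t^N R$ is Artinian-like enough that Theorem \ref{dimzerotame}/\ref{modnil} applies to give $\overline\varphi\in\EA_\infty(R/t^N R)$, Theorem \ref{locmod} finishes the job.

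\textbf{Main obstacle.} The genuinely delicate point is the interplay between \emph{which} localization witnesses stable tameness on a given fibre and the non-regularity of the thickened quotients $R/t^N R$: Theorem \ref{locmod} wants information over $R_t$ \emph{and} over $R/t^N R$ simultaneously, and assembling the $R/t^N R$-information by a secondary induction over the open cover — while keeping track that the number of added dimensions is allowed to grow with $N$ (as the remarks after Theorem \ref{locmod} explicitly permit) — is where the argument must be handled with care. Everything else (reduction to $\SA_n$, quasi-compactness, descent of a single polynomial identity along $R\to\varinjlim R_f$) is routine.
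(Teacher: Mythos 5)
Your proposal has genuine gaps at the two places where the real work happens. First, the ``spreading'' step is unjustified: from stable tameness of $\overline\varphi_{\mathscr P}$ over the residue field $k(\mathscr P)$ you cannot conclude that $\varphi$ itself becomes (stably) tame over some $R_f$ with $f\notin\mathscr P$. The field $k(\mathscr P)$ is a \emph{quotient} of $R_{\mathscr P}$, and a factorization over a quotient does not lift; clearing denominators only works at minimal primes (where $k(\mathscr P)$ is a localization of $R$, e.g.\ the fraction field of a domain). Lifting tameness data through quotients is precisely the hard content of Theorems \ref{modnil} and \ref{locmod}, not a routine descent along $k(\mathscr P)=\varinjlim R_f/\mathscr P R_f$. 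Second, in your ``step two'' the phrase ``$R/t^NR$ is Artinian-like enough'' hides the missing argument: for $\dim R\ge 2$ the ring $R/t^NR$ is not Artinian, so Theorem \ref{dimzerotame} does not apply, and Theorem \ref{modnil} needs as input that $\overline\varphi$ is already stably in $\EA$ over $R/tR$. The paper obtains this by choosing $t=a$ to be part of a regular system of parameters of the regular \emph{local} ring $R$, so that $R/aR$ is again regular local of dimension $d-1$; an induction on $\dim R$ (together with the hypothesis at all primes) gives that $\bar\varphi$ is stably tame over $R/aR$, and Proposition \ref{elem} ($\SL=\E$ over local rings) upgrades stably tame plus Jacobian $1$ to membership in $\EA$, after which Theorem \ref{modnil} with $A=R/a^NR$, $I=aA$ produces the needed $\EA_\infty(R/a^NR)$ statement. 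Your secondary induction ``over the open cover'' cannot substitute for this: membership in $\EA_\infty$ over $R/t^NR$ is not obviously a local condition, and Theorem \ref{locmod} cannot be re-invoked over $R/t^NR$, which is neither reduced nor regular.

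The other load-bearing ingredient you treat as routine is the gluing itself. The statement that a finite cover $D(f_1),\dots,D(f_s)$ with $\varphi_{f_j}$ (stably) tame forces $\varphi$ to be stably tame is exactly the Localization Theorem \ref{loc} (via Remark \ref{locstab} and Lemma \ref{2cover}), whose proof occupies the rest of the paper: it is a Quillen--Suslin style patching using the scalar operator of \ref{scalarop}, Lemma \ref{cong}, Lemma \ref{final}, the homomorphism $\Psi_t$, and the commutator formulas. Your ``step one'' reduction to the local case silently assumes this theorem, and the justification offered (``that forces one of the $D(f_j)$ to be all of $\mathrm{Spec}\,R$'') is not a valid argument. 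Nor can Theorem \ref{locmod} serve as the gluing device for two opens: its second hypothesis concerns the quotient $R/t^NR$, not the complementary open $V(t)$, and tameness over opens covering $V(t)$ does not translate into the required $\EA_\infty(R/t^NR)$ statement. In short, the correct skeleton is: reduce to a regular local ring by Theorem \ref{loc}/Remark \ref{locstab}, then induct on dimension with a regular parameter, using Proposition \ref{elem}, Theorem \ref{modnil}, and Theorem \ref{locmod}; your proposal bypasses both pillars (the Localization Theorem and the dimension induction with a regular parameter) and the steps substituted for them do not hold.
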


This theorem will be proved via a series of reductions, the first being the reduction to the local case.  For this we make the following definition:

\begin{defi} For any ring $R$, an automorphism $\varphi\in\GA_n(R)$ will be called {\it locally tame} if for all prime ideals $\mathfrak p\subset R$, $\varphi_{\mathfrak p}\in\T_n(R_{\mathfrak p})$.  Also, $\varphi$ is called {\it locally stably tame} if for all prime ideals $\mathfrak p$, $\varphi_{\mathfrak p}$ is stably tame.
\end{defi}

The main tool, of considerable interest in itself, will be:

\begin{theo}[Localization Theorem]\label{loc} Let $R$ be a ring, $\varphi\in\GA_n(R)$.  If $\varphi$ is locally tame, then $\varphi$ is stably tame.  
\end{theo}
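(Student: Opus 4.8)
The plan is to reduce to a finite, ``quasi-compactness'' style argument using the fact that tameness of a localized automorphism only involves finitely many denominators. First I would observe that for a fixed prime $\mathfrak p$, the statement $\varphi_{\mathfrak p}\in\T_n(R_{\mathfrak p})$ means we can write $\varphi_{\mathfrak p}=\alpha_1\varepsilon_1\cdots\alpha_r\varepsilon_r$ with $\alpha_j\in\GL_n(R_{\mathfrak p})$ and $\varepsilon_j$ elementary over $R_{\mathfrak p}$; since only finitely many coefficients are involved, this factorization is already defined over $R_s$ for some $s\in R\setminus\mathfrak p$. Hence there is $s\notin\mathfrak p$ with $\varphi_s\in\T_n(R_s)$. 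The elements $s$ arising this way, as $\mathfrak p$ ranges over $\text{Spec}\,R$, generate the unit ideal (their common zero locus is empty), so by quasi-compactness finitely many of them, say $t_1,\ldots,t_k$, already generate $(1)$, and $\varphi_{t_i}\in\T_n(R_{t_i})$ for each $i$.

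Next I would set up an inductive/telescoping argument on this finite cover, the heart of which is a two-patch gluing statement: if $\varphi_s$ and $\varphi_t$ are both tame (stably), and $(s,t)=(1)$ or more generally $s,t$ together with the previously handled denominators generate $(1)$, then $\varphi$ is stably tame. The natural device here is Proposition \ref{trick} (the homomorphism $\Psi_t$) together with Lemma \ref{loclem}: after base-changing to $R_t$ one has $\varphi\in\T_n(R_t)$, and one wants to ``clear the denominators'' by stabilizing. Concretely, I expect to mimic the structure of the proof of Theorem \ref{locmod}: write $\varphi_t$ as a product of $\alpha_j$'s and $\varepsilon_j$'s over $R_t$, apply $\Psi_{t^N}$ for large $N$ (which makes $\Psi_{t^N}(\varphi)$ elementarily equivalent over $R$ to a stabilization $\varphi^{[n]}$), and use Lemmas \ref{sweepleft}, \ref{elemsweep}, \ref{shorten} to push all the $t$-denominators into a single translation factor plus a linear factor $\gamma\in\GL_n(R[X])$, so that $\varphi$ becomes, after stabilization and elementary equivalence, an element coming from $\GL_n$ over a polynomial ring — but now with the crucial extra information that $\varphi_s$ is \emph{also} tame, which should let us resolve that residual linear piece without invoking regularity. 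The reduction $\varphi\mapsto\varphi_{\text{affine part}}$ and the descent along the cover $\{t_i\}$ is then finite induction: handle $t_1$, then $t_2$ relative to $R[1/t_1]$-data already cleared, etc.

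The step I expect to be the main obstacle is precisely this gluing/descent: turning ``$\varphi$ is tame after inverting each of finitely many $t_i$ that generate $(1)$'' into ``$\varphi^{[m]}$ is tame'' without the regularity hypothesis that was available in Theorem \ref{locmod}. In \ref{locmod} the leftover $\gamma_1\in\GL_n(R[X])$ was dispatched by Suslin's Theorem \ref{suslin}; here we do not get to assume $R$ regular, so instead we must exploit that the \emph{same} $\varphi$ is tame over every patch simultaneously, and combine the denominator-clearing from two (or more) complementary patches so that the residual $\GL_n(R[X])$-elements from different patches cancel, or land in $\E_n$, which lies in $\EA_n$. Making this cancellation precise — tracking $t_i$-orders uniformly across patches and choosing the stabilization parameters $N_i$ consistently, as in Remark \ref{important} — is the delicate bookkeeping that the proof will have to carry out carefully; everything else is either quasi-compactness or an application of the machinery ($\Psi_t$, the sweep and shorten lemmas, the commutator formulas) already assembled in \S\ref{prelims}.
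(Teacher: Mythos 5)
Your opening reductions are sound and match the paper: local tameness gives, for each prime, some $s\notin\mathfrak p$ with $\varphi_s\in\T_n(R_s)$, quasi-compactness yields finitely many $t_i$ generating the unit ideal, and (as in Remark \ref{locstab} and the reduction to Lemma \ref{2cover}) the whole problem is the two-patch gluing statement. But the engine you propose for that gluing step does not work, and the step you flag as ``the main obstacle'' is in fact the entire content of the theorem, left unproved. Concretely: Proposition \ref{trick} makes $\Psi_{t^N}(\varphi)$ elementarily equivalent over $R$ to $\varphi^{[n]}$ only when $\varphi=X+F$ with $F\in t^NR[X]^n$, i.e.\ when $\varphi\equiv\id \bmod t^N$. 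In Theorem \ref{locmod} this is arranged by the extra hypothesis $\bar\varphi\in\EA_\infty(R/t^NR)$, which permits lifting $\bar\varphi^{-1}$ to $\rho\in\EA_\infty(R)$ and replacing $\varphi$ by $\varphi\rho$. In the Localization Theorem you have no information about $\varphi$ modulo $t^N$, so the $\Psi_{t^N}$/\ref{sweepleft}/\ref{shorten} machinery never gets started; and even if it did, you would be left with a residual $\gamma_1\in\GL_n(R[X])$ which, without regularity, need not be stably tame, and your hoped-for ``cancellation between patches'' is exactly the missing argument, with no mechanism offered for it.

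The paper takes a genuinely different route for this step, and it is worth knowing: reduce to $\varphi\in\GA_n^0(R)$ and use the scalar action $\varphi\mapsto\varphi^c$ of \ref{scalarop}. The key statement is the congruence lemma (Lemma \ref{cong}, Quillen--Suslin style): if $\varphi_t\in\T_n^0(R_t)$ then there is $N$ such that $\varphi^c(\varphi^d)^{-1}$ is stably tame whenever $c\equiv d\bmod t^NR$. Granting this, two-patch gluing is just the Chinese Remainder Theorem: with $a^N,b^N$ generating $R$, choose $x\equiv 0\bmod a^N$, $x\equiv 1\bmod b^N$, normalize $\varphi^0=\id$, and write $\varphi=\left(\varphi^1(\varphi^x)^{-1}\right)\left(\varphi^x(\varphi^0)^{-1}\right)$, a product of two stably tame automorphisms. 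Lemma \ref{cong} itself is proved by introducing the $Z$-vanishing family $\psi(W,Z)=\varphi^{W+Z}(\varphi^W)^{-1}$ over $R[W,Z]$, factoring $\psi_t$ as a telescoped product of conjugates $\tau_1\cdots\tau_i\,\varepsilon_i\,(\tau_1\cdots\tau_i)^{-1}$ with $\varepsilon_i$ elementary and $Z$-vanishing, and then showing each conjugate $\tau\varepsilon(t^NZ)\tau^{-1}$ descends to a stably tame automorphism over $R[Z]$ (Lemma \ref{final}), by induction on the length of $\tau$ using the two commutator formulas via Lemma \ref{linelemconj} and the descent Lemma \ref{loclem}. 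No $\Psi_t$, no $t$-order bookkeeping, and no Suslin theorem are needed, which is precisely why Theorem \ref{loc} holds over an arbitrary ring. Your proposal, by contrast, imports the machinery whose applicability depends on the hypotheses of Theorem \ref{locmod} and does not supply the new idea (the scalar-operator congruence argument) that the general case requires.
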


\begin{rema} \label{locstab}  If $\varphi_{\mathfrak p}\in\T_n(R_{\mathfrak p})$ then it is a routine exercise to see that there exists $a\in R-\mathfrak p$ such that $\varphi_a\in\T_n(R_a)$.  Thus we can find $a_1,\ldots,a_r\in R$ generating $R$ as an ideal such that $\varphi_{a_i}\in\T_n(R_{a_i})$ for each $i$.   It follows that if $\varphi$ is locally stably tame, then $\varphi$ is stably tame.  Just use this observation to bound the number of variables needed at any prime ideal, then stabilize and apply Theorem \ref{loc}.
\end{rema}

We will now prove Theorem \ref{main1} assuming Theorem \ref{loc}.  Appealing to Remark \ref{locstab}, we may assume that $R$ is a regular \underbar{local} ring, since all residue fields of localizations of $R$ are residue fields of $R$.  We proceed by induction on $d=\dim R$.  If $d=0$, $R$ is a field and $\varphi$ is stably tame by hypothesis.

Assume $d\ge 1$.  Since a regular local ring is a domain, we have $|J(\varphi)|\in R^*$, so we may assume $\varphi\in\SA_n(R)$.  Let $a\in R$ be part of a regular system of parameters.  Then $R_a=R[1/a]$ is regular of dimension $d-1$, so all of its localizations are regular local rings of dimension $<d$, so by induction on $d$, appealing to Theorem \ref{loc} and Remark \ref{locstab}, $\varphi_a$ is stably tame.  Note that $\bar R=R/aR$ is a regular local ring of dimension $d-1$, so $\bar\varphi$ is also stably tame by induction ($\bar\varphi$ satisfies the hypothesis of Theorem \ref{main1} since all residue fields of $\bar R$ are residue fields of $R$).  By Proposition \ref{elem}, $\T_m(\bar R)\cap\SA_m(\bar R)=\EA_m(\bar R)$ for all $m$, so $\bar\varphi\in\EA_m(\bar R)$ for some $m\ge n$.  Replacing $n$ by $m$, Theorem \ref{locmod} applies to yield that $\varphi$ is stably tame.  


Thus we are reduced to proving Theorem \ref{loc}.

As we observed in Remark \ref{locstab}, the hypothesis of Theorem \ref{loc} implies the existence of $a_1,\ldots,a_r\in R$ generating $R$ as an ideal such that $\varphi_{a_i}\in\T_n(R_{a_i})$ for $i=1,\ldots,r$.  We will now employ an old technique which reduces to the case $r=2$.  We will show that the set $$J=\{a\in R\,|\,\varphi_a\text{ is stably tame in }\GA_n(R_a)\}$$ is an ideal in $R$.  Since $a_1,\ldots,a_r\in J$, this will show $J=R$, so $1\in J$, so $\varphi_1=\varphi$ is stably tame.  First note that if $x\in R$, $a\in J$, then $xa\in J$, since $R_{xa}$ is a localization of $R_a$.  So it remains to show that $a+b\in J$ when $a,b\in J$.  Note that $\varphi_{a(a+b)}$ and $\varphi_{b(a+b)}$ are both stably tame, being localizations of $\varphi_a$ and $\varphi_b$, respectively, and that $a,b$ generate $R_{a+b}$ as an ideal.  So we are reduced (after stabilizing) to the case $r=2$, i.e., to proving:

\begin{lemma}\label{2cover}  Suppose $R$ a ring and $a,b\in R$ with $aR+bR=R$.  If $\varphi\in\GA_n(R)$ has the property that $\varphi_a\in\T_n(R_a)$ and $\varphi_b\in\T_n(R_b)$, then $\varphi$ is stably tame.
\end{lemma}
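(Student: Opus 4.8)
The plan is to exploit the fact that $\varphi_a$ is tame over $R_a$ and $\varphi_b$ is tame over $R_b$, and to ``patch'' these two factorizations together over $R$ at the cost of adding dimensions. The basic mechanism is the homomorphism $\Psi_t$ of Proposition \ref{trick}: if $\varphi = X+F(X)$ with $F\in t R[X]^n$, then $\Psi_t(\varphi)$ is again defined over $R$ (not just $R_t$) and is elementarily equivalent to $\varphi^{[n]}$. So the first step is to arrange, after stabilizing and composing with a tame automorphism over $R$, that the relevant entries land in the principal ideals we need. Concretely, I would first reduce to the case $\varphi\in\SA_n(R)$ (harmless: a localization argument plus the fact that $|J\varphi|$ is a unit once it is a unit in both $R_a$ and $R_b$, hence after covering, in $R$). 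Then, working over $R_a$, write $\varphi = \alpha_1\varepsilon_1\cdots\alpha_r\varepsilon_r$ as a product of linear and elementary automorphisms over $R_a$; the entries involved have bounded $a$-order, so some power $a^{N_a}$ clears denominators.

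The heart of the argument should mirror the proof of Theorem \ref{locmod}: apply $\Psi_{a^{N}}$ to this factorization, and use Lemma \ref{sweepleft} and repeatedly Lemma \ref{shorten} to ``sweep'' all the translations (the $(1/a^N)$-parts) to the left, absorbing elementary pieces over $R[X]$ on the right and collapsing the linear pieces, until $\Psi_{a^N}(\varphi)$ is shown to be elementarily equivalent over $R$ to $\tau_1\gamma_1$ with $\tau_1$ a translation over $R[X]$ and $\gamma_1\in\GL_n(R[X])$. The key point, guaranteed by Remark \ref{important} and the uniformity clause in Lemma \ref{shorten}, is that $N$ can be chosen up front depending only on the $a$-orders and degrees appearing in the fixed factorization of $\varphi$ over $R_a$. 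This shows: after adding $n$ dimensions, $\varphi^{[n]}$ is tamely equivalent over $R$ to something of the form $\tau_1\gamma_1$, i.e., $\varphi$ is stably tamely equivalent over $R$ to an element of $\GL_n(R[X])$ that becomes tame (in fact trivial up to the translation) after inverting $a$. Run the same construction with $b$ in place of $a$.

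Now one has $\varphi$, stably, tamely equivalent to two automorphisms that become trivial (or at least tame) after inverting $a$ and inverting $b$ respectively, and $aR+bR=R$. At this point I would invoke the standard Quillen-type patching maneuver: use Lemma \ref{loclem} to see that the discrepancy between the two local trivializations, pulled back to $R_{ab}$, vanishes after substituting $a^M Z$ (or $b^M Z$) for the stabilizing variable, so that one can glue a genuine factorization over $R$ by multiplying the two pieces, which agree on the overlap after this rescaling. The main obstacle I anticipate is precisely this gluing step: one must carefully track that the intermediate data (the $\tilde\gamma$'s and $\tilde p$'s from Lemma \ref{shorten}, which live over $R_a[X]$ and $R_b[X]$) can be made to match over $R_{ab}[X]$ after a suitable rescaling of the extra variables, and that the rescaling itself is a tame automorphism. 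This is where Lemma \ref{loclem} and a dimension count must be combined delicately — it is the analogue, in this setting, of the Quillen–Suslin gluing, and getting the bookkeeping of new variables and $t$-orders to close up is the technical crux. Once the gluing succeeds, $\varphi$ (after the bounded number of stabilizations used) is exhibited as a product of linear and elementary automorphisms over $R$, hence tame, completing the proof.
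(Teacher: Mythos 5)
Your plan breaks down at its very first step, and the gap propagates through the rest. The mechanism you want to import from Theorem \ref{locmod} --- applying $\Psi_{a^N}$ and staying over $R$ --- requires, by Proposition \ref{trick}, that (after composing with something tame over $R$) $\varphi=X+F$ with $F\in a^NR[X]^n$, i.e.\ that $\varphi$ be congruent to the identity modulo $a^N$. In Theorem \ref{locmod} this is supplied by the extra hypothesis $\bar\varphi\in\EA_\infty(R/a^NR)$, which lets one lift $\bar\varphi^{-1}$ to $\rho\in\EA(R)$ and pass to $\phi=\varphi\rho\equiv\id \bmod a^N$. In Lemma \ref{2cover} there is no such hypothesis: for an arbitrary ring $R$ nothing whatsoever is known about the image of $\varphi$ in $\GA_n(R/aR)$, so ``arranging that the relevant entries land in the principal ideals we need'' cannot be achieved by composing with a tame automorphism over $R$; this is precisely the difficulty the lemma has to overcome. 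Even granting that step, your endpoint after the sweeping of Lemmas \ref{sweepleft} and \ref{shorten} is an element of $\GL_n(R[X])$, which in Theorem \ref{locmod} is disposed of by Suslin's Theorem \ref{suslin} --- available only for $R$ regular, whereas Lemma \ref{2cover} is asserted for an arbitrary ring. Finally, the ``standard Quillen-type patching maneuver'' you invoke is exactly the construction that is missing: Lemma \ref{loclem} compares two $Z$-vanishing automorphisms, and your setup produces no $Z$-vanishing families to compare.

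The missing idea is the isotopy furnished by the scalar operator of \ref{scalarop}. The paper first reduces to $\varphi\in\GA_n^0(R)$ with $\varphi^0=\id$ and proves the congruence statement Lemma \ref{cong}: if $\varphi_t\in\T_n^0(R_t)$, then for $N\gg0$ the automorphism $\varphi^c(\varphi^d)^{-1}$ is stably tame whenever $c\equiv d\bmod t^NR$. This is done by introducing place-markers $W,Z$ and setting $\psi=\varphi^{W+Z}(\varphi^W)^{-1}$, which is $Z$-vanishing by construction (this is where the $Z$-vanishing data needed for Lemma \ref{loclem} actually comes from); one factors $\psi_t$ into elements of $\T_n^0(R_t[W])$ and origin-preserving $Z$-vanishing elementaries, and tames the conjugates $\tau\varepsilon(t^NZ)\tau^{-1}$ stably over $R[W,Z]$ via the commutator formulas (Lemmas \ref{linconjelem} and \ref{elemconjelem}, through Lemmas \ref{linelemconj} and \ref{final}), with Lemma \ref{loclem} descending the resulting identity from $R_t$ to $R$. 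Lemma \ref{2cover} then follows by the Chinese Remainder Theorem: choose $x\equiv0\bmod a^N$ and $x\equiv1\bmod b^N$ and write $\varphi=\bigl(\varphi^1(\varphi^x)^{-1}\bigr)\bigl(\varphi^x(\varphi^0)^{-1}\bigr)$. Note that this route uses neither $\Psi_t$ nor Suslin's theorem, which is exactly what allows the lemma to hold over an arbitrary ring.
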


If $\varphi=(F_1,\ldots,F_n)\in\GA_n(R)$, let $\rho$ be the translation $X-F(0)$.  Then $\rho\varphi\in\GA_n^0(R)$.  Moreover, $\varphi\in\T_n(R)$ if and only if $\rho\varphi\in\T_n^0(R)$.  So to prove Lemma \ref{2cover} we may assume $\varphi\in\GA_n^0(R)$.   This allows us to use the scalar operator introduced in \ref{scalarop}.  Lemma \ref{2cover} will follow from:

\begin{lemma}\label{cong} Let $R$ be a ring, $t\in R$, $\varphi\in\GA_n^0(R)$.  Assume $\varphi_t\in\T_n^0(R_t)$.  Then there exists an integer $N\ge0$ such that if $c,d\in R$ with $c\equiv d\mod t^NR$, then $\varphi^c(\varphi^d)^{-1}$ is stably tame (over $R$).
\end{lemma}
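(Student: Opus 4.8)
The plan is to combine a telescoping identity, coming from a fixed tame factorization of $\varphi_t$, with the two Commutator Formulas, thereby reducing everything to the problem of conjugating highly $t$-divisible elementary automorphisms by linear and elementary automorphisms of bounded $t$-order.

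Fix once and for all a factorization $\varphi_t=\psi_1\cdots\psi_s$ over $R_t$ in which each $\psi_k$ is either in $\GL_n(R_t)$ or an origin-preserving elementary $e_{j_k}(f_k)$; let $B$ bound the $t$-orders of the matrices of all $\psi_k^{\pm1}$ and of all $f_k$, and let $D$ bound the degrees of the $f_k$ (constants depending only on $\varphi$ and the chosen factorization, not on $c,d$). The scalar operator of \ref{scalarop} is a group endomorphism fixing $\GL_n$, so $\varphi_t^c=\psi_1^c\cdots\psi_s^c$ and similarly for $d$, and a standard telescoping rearrangement gives, over $R_t$,
\[
\varphi^c(\varphi^d)^{-1}=\prod_k\ P_{k-1}\bigl(\psi_k^c(\psi_k^d)^{-1}\bigr)P_{k-1}^{-1},\qquad P_{k-1}=\psi_1^c\cdots\psi_{k-1}^c,
\]
the product taken in decreasing order of $k$. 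A linear $\psi_k$ contributes a trivial factor; for $\psi_k=e_{j_k}(f_k)$, expanding $f_k$ into homogeneous parts and using that $c-d$ divides $c^{m-1}-d^{m-1}$, one finds $\psi_k^c(\psi_k^d)^{-1}=e_{j_k}\bigl((c-d)g_k\bigr)$ with $g_k\in R_t[X,\hat j_k]$ of $t$-order $\le B$ and degree $\le D$, again independently of $c$ and $d$. Also $\varphi^c\equiv\varphi^d\bmod(c-d)$, so when $c\equiv d\bmod t^NR$ the automorphism $\varphi^c(\varphi^d)^{-1}$ reduces to the identity mod $t^N$; writing $c-d=t^Ne$, for $N>B$ every coefficient $(c-d)g_k=t^Ne\,g_k$ lies in $R[X,\hat j_k]$ and is divisible by $t^{N-B}$.

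It therefore suffices to prove, for $L$ sufficiently large: a conjugate $\mu_1\cdots\mu_r\,\varepsilon\,\mu_r^{-1}\cdots\mu_1^{-1}$, where $\varepsilon$ is an elementary automorphism over $R$ whose coefficient is divisible by $t^L$ and has degree $\le D$, and each $\mu_j$ is a linear or elementary automorphism over $R_t$ of $t$-order $\le B$ and coefficient-degree $\le D$, agrees (after $\le r$ added dimensions) with a product of boundedly many elementary automorphisms over $R$, each again with coefficient divisible by at least $t^{\,L/2^r-C}$ and of bounded degree, $C$ depending only on $r,B,D$. I would prove this by induction on $r$, peeling off the innermost $\mu_r$ first. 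For a single conjugation $\mu\,\varepsilon\,\mu^{-1}$ I apply Lemma~\ref{linconjelem} or Lemma~\ref{elemconjelem} (for the latter, the dimension variables outside the two relevant positions are absorbed into the base ring), writing the coefficient of $\varepsilon$ as a product $b\cdot(\text{rest})$ with $b\approx t^{L/2}$: the linear factor of the commutator expression ($\kappa$, resp.\ $\gamma$) is then divisible by $b$ and so lies over $R$ and is $t$-divisible, while the other factor ($\nu$, resp.\ $\omega$) -- which brings in $\mu^{-1}$, contributing $t$-denominators of order $\le O(B,D)$ -- is divisible by $t^{L/2}$ and hence also lies over $R$ and is $t$-divisible. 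So $\mu\,\varepsilon\,\mu^{-1}$ agrees, after one stabilization, with a commutator of $t$-divisible elementaries over $R$ of bounded degree. A conjugation of a product of elementaries is carried out termwise, re-using a single new dimension; conjugating an elementary whose coefficient already involves only previously-added dimension variables needs no new dimension, since there the commutator trick is unnecessary. Tracking the at-worst-halving (minus $O(B,D)$) loss of $t$-divisibility and the bounded growth of degree over the $\le r$ peelings yields the claim, with all bounds depending only on $\varphi$ and its fixed factorization.

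Putting the pieces together: for $N$ chosen large enough, uniformly in $c$ and $d$ (the needed size depending only on $\varphi$ and the fixed factorization), each of the $\le s$ conjugates in the telescoping identity becomes, after at most $s-1$ stabilizations, a product of elementary automorphisms over $R$; stabilizing all of them to a common dimension and multiplying produces an element $\Theta\in\EA_{n+s-1}(R)$ with $\Theta_t=\bigl(\varphi^c(\varphi^d)^{-1}\bigr)^{[s-1]}$ in $\GA_{n+s-1}(R_t)$. If $t$ is a non-zero-divisor this equality already holds over $R$, so $\bigl(\varphi^c(\varphi^d)^{-1}\bigr)^{[s-1]}\in\EA_{n+s-1}(R)$ and $\varphi^c(\varphi^d)^{-1}$ is stably tame; in general one first runs the whole argument over $R[Z]$ applied to the $Z$-vanishing automorphism $\varphi^{d+t^{N-N'}eZ}(\varphi^d)^{-1}$ and then invokes Lemma~\ref{loclem} to pass to $Z\mapsto t^{N'}Z$ before specializing $Z=1$. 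The main obstacle is the bookkeeping in the inductive peeling -- above all, preventing conjugation by elementary (as opposed to merely linear) factors from introducing uncontrolled $t$-denominators -- which is precisely what the ``split the divisibility evenly'' device is designed to handle.
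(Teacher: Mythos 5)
Your overall engine---telescoping $\varphi^c(\varphi^d)^{-1}$ along a fixed tame factorization of $\varphi_t$ so that the nontrivial factors are conjugates of elementary automorphisms whose coefficients are divisible by $c-d$, then peeling the conjugators one at a time with the two Commutator Formulas while tracking $t$-orders and degrees quantitatively---is in essence the paper's strategy: there the relevant automorphism is rewritten as a product of conjugates of $Z$-vanishing elementaries by elements of $\T_n^0(R_t[W])$ and the conjugators are peeled by induction (Lemma \ref{final} via Lemma \ref{linelemconj}), with the bookkeeping done not by ``splitting the divisibility evenly'' but by carrying place-marker variables $T,Z$ and substituting a high power of $t$ into them at the end. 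Your quantitative version looks workable, with bounds depending only on the fixed factorization, provided the case analysis in the peeling is done carefully (for instance, an elementary in an \emph{old} position whose coefficient involves the added variable together with old variables still requires the commutator trick when conjugated by a linear map, contrary to your blanket remark; your allowance of one shared new dimension per peeling absorbs this).

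The genuine gap is in the zero-divisor case, which is precisely where the lemma's uniformity in $c,d$ is at stake. Your patch runs the argument over $R[Z]$ for $\varphi^{d+t^{N-N'}eZ}(\varphi^d)^{-1}$ with the \emph{specific} $d$ and $e$, and then invokes Lemma \ref{loclem}; but the exponent $N'$ furnished by Lemma \ref{loclem} depends on the particular coefficients being compared, hence on $d$ and $e$ (and it even occurs inside the definition of the automorphism the argument is run on, before it can be known). Over a general ring the kernel of $R\to R_t$ need not be annihilated by any single power of $t$, so there is no bound on $N'$ uniform in $d$; what your argument yields is ``for every $d$ there is an $N(d)$ that works for all $c$,'' which is strictly weaker than the statement and not enough for its use in Lemma \ref{2cover}, where the element $x$ playing the role of $d$ is chosen only \emph{after} $N$ has been fixed. (When $t$ is a non-zero-divisor, or when a single power of $t$ kills $\ker(R\to R_t)$, your argument is fine.) The repair is exactly the paper's device: make $d$ generic as well, i.e., run your telescoping and peeling over $R[W,Z]$ for $\psi(W,Z)=\varphi^{W+t^{N_0}Z}(\varphi^{W})^{-1}$ --- your divisibility bookkeeping goes through verbatim with $R[W]$ as base ring since $W$ contributes no $t$-denominators --- arrange $Z$-vanishing, apply Lemma \ref{loclem} once to this generic pair to get a single exponent, and only then specialize $W\mapsto d$, $Z\mapsto e$.
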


\begin{rema} This lemma is inspired by an argument introduced by Quillen and Suslin, independently, in their proof of Serre's Conjecture, and in Suslin's follow-up work on $K_1$.  
\end{rema}

We first prove Lemma \ref{2cover} assuming Lemma \ref{cong}.  We may assume $\varphi^0=\text{id}$ (replace $\varphi$ by $\varphi(\varphi^0)^{-1}$, which is valid since $\varphi^0\in\GL_n(R)$).  Let $N$ be the maximum of the integers yielded by Lemma \ref{cong} for $t=a$ and $t=b$, respectively.  Since $a^N$ and $b^N$ generate $R$ as an ideal, then by the Chinese Remainder Theorem we can choose $x\in R$ such that $x\equiv 0\mod a^N$ and $x\equiv 1\mod b^N$.  By Lemma \ref{cong}, $\varphi^1(\varphi^x)^{-1}$ and $\varphi^x(\varphi^0)^{-1}$ are tame.  Their product is $\varphi$, so we are done.

We are now reduced to proving Lemma \ref{cong}.  To this end we introduce indeterminates $W$ and $Z$, which will serve basically as place-markers, and extend $R$ and $R_t$ to $R[W,Z]$ and $R_t[W,Z]$, and consider the automorphism $\psi=\psi(W,Z)=\varphi^{W+Z}(\varphi^W)^{-1}\in\GA_n^0(R[W,Z])$.  

\begin{claim}\label{claim}  For $N$ sufficiently large, $\psi(W,t^NZ)$ is stably tame over $R[W,Z]$.
\end{claim}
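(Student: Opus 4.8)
The plan is to show that, after adjoining finitely many variables, $\psi_t^{[M]}$ becomes a product of \emph{$Z$-vanishing} origin-preserving elementary automorphisms over $R_t[W,Z]$; clearing their denominators by the substitution $Z\mapsto t^{N_0}Z$ then produces a \emph{tame} automorphism $\phi$ over $R[W,Z]$ agreeing with $\psi^{[M]}(W,t^{N_0}Z)$ after inverting $t$, and Lemma~\ref{loclem} finishes by converting this localized agreement into an honest equality over $R[W,Z]$ at the cost of one further substitution $Z\mapsto t^{N_1}Z$.

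To carry this out, first write $\varphi_t=\theta_1\cdots\theta_s$ with each $\theta_j$ in $\GL_n(R_t)$ or an origin-preserving elementary automorphism over $R_t$, possible since $\varphi_t\in\T_n^0(R_t)$. The scalar operator of \ref{scalarop} is a group endomorphism of $\GA_n^0$ that is natural in the base ring, so over $R_t[W,Z]$ we have $\varphi_t^{W+Z}=\theta_1^{W+Z}\cdots\theta_s^{W+Z}$ and $\varphi_t^{W}=\theta_1^{W}\cdots\theta_s^{W}$, and a routine telescoping identity gives
$$\psi_t=\varphi_t^{W+Z}\,(\varphi_t^{W})^{-1}=\prod_{j=1}^{s} C_{j-1}\,\mu_j\,C_{j-1}^{-1},$$
where $C_{j-1}=\theta_1^{W}\cdots\theta_{j-1}^{W}$ (with $C_0=\id$) and $\mu_j=\theta_j^{W+Z}(\theta_j^{W})^{-1}$. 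Inspecting homogeneous components shows $\mu_j=\id$ when $\theta_j$ is linear, while if $\theta_j=e_i(g)$ is elementary then the degree-one part of $g$ cancels and $\mu_j=e_i(Z\,h_j)$ with $h_j(W,Z,0)=0$; thus each nontrivial $\mu_j$ is a $Z$-vanishing origin-preserving elementary automorphism over $R_t[W,Z]$, while each $C_{j-1}$ is a product of automorphisms that are linear over $R_t$ or origin-preserving elementary over $R_t[W]$.

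Next, dispose of each conjugate $C_{j-1}\mu_j C_{j-1}^{-1}$ by conjugating $\mu_j$ through the factors $\theta_{j-1}^{W},\dots,\theta_1^{W}$ of $C_{j-1}$ one at a time, from the inside out. Each such factor is linear over $R_t$ or origin-preserving elementary over $R_t[W]$, so Lemma~\ref{linelemconj} (with its indeterminate $T$ set equal to $1$ and $R_t[W]$ in the role of its $R$) applies and turns a product of $Z$-vanishing origin-preserving elementary automorphisms into another such product over $R_t[W,Z]$, at the price of one extra dimension. Iterating over all $j$ and all factors yields an integer $M\ge0$ and $Z$-vanishing origin-preserving elementary automorphisms $\delta_1,\dots,\delta_L$ over $R_t[W,Z]$ with $\psi_t^{[M]}=\delta_1\cdots\delta_L$ in $\GA_{n+M}(R_t[W,Z])$.

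To finish, note each $\delta_l$ has the form $e_{i_l}(Z\,r_l)$ with $r_l$ over $R_t[W,Z]$; since there are finitely many, their $t$-orders are bounded, so for $N_0$ large every $\delta_l(W,t^{N_0}Z)$ has coefficients in $R[W,Z]$ and is a $Z$-vanishing origin-preserving elementary automorphism over $R[W,Z]$. Put $\phi:=\delta_1(W,t^{N_0}Z)\cdots\delta_L(W,t^{N_0}Z)\in\EA_{n+M}(R[W,Z])$ and $\psi':=\psi^{[M]}(W,t^{N_0}Z)\in\GA_{n+M}(R[W,Z])$; both are $Z$-vanishing and $\psi'_t=\phi_t$ over $R_t[W,Z]$. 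Lemma~\ref{loclem}, applied with base ring $R[W]$, indeterminate $Z$, and the element $t$, yields $N_1$ with $\psi'(W,t^{N_1}Z)=\phi(W,t^{N_1}Z)$ over $R[W,Z]$; the right-hand side is a product of elementaries, hence tame, so $\psi^{[M]}(W,t^{N}Z)$ is tame over $R[W,Z]$ for $N=N_0+N_1$, proving $\psi(W,t^{N}Z)$ stably tame. The delicate point is the third paragraph: keeping everything inside the class of $Z$-vanishing origin-preserving elementary automorphisms while conjugating by the \emph{elementary} (not merely linear) factors of $C_{j-1}$, and absorbing the resulting growth in the number of variables — this is precisely what Lemma~\ref{linelemconj} was engineered to handle. (The possibility that $t$ is a zero-divisor is the reason Lemma~\ref{loclem}, rather than injectivity of $R[W,Z]\hookrightarrow R_t[W,Z]$, is needed at the very end.)
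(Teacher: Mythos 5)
Your overall skeleton---telescoping $\psi_t$ into a product of conjugates $C_{j-1}\mu_j C_{j-1}^{-1}$ of $Z$-vanishing origin-preserving elementaries by elements of $\T_n^0(R_t[W])$, then clearing denominators via $Z\mapsto t^{N_0}Z$ and invoking Lemma \ref{loclem} because $t$ may be a zero-divisor---is exactly the paper's strategy (its factorization (\ref{conjfact}) plays the role of your telescoping product, and Lemma \ref{loclem} is used at the same spot). But your third paragraph has a genuine gap. Lemma \ref{linelemconj} does \emph{not} say that $(\gamma\varepsilon(Z)\gamma^{-1})^{[1]}$ is a product of $Z$-vanishing elementaries: it produces a mixture of $Z$-vanishing \emph{and} $T$-vanishing factors in $\EA_{n+1}^0(R[Z,T])$, and when you specialize $T=1$ the $T$-vanishing factors cease to be $Z$-vanishing. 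Concretely, for a linear conjugator with matrix $\mathcal A$ over $R_t$ the factor $\kappa=(X+a^{\text{t}}TY,\,Y)$ becomes $(X+a^{\text{t}}Y,\,Y)$ at $T=1$: it involves no $Z$ at all, yet its coefficients (a column of $\mathcal A$) genuinely carry $t$ in the denominator; the same happens with the two factors in (\ref{both}) when the conjugator is elementary over $R_t[W]$. Hence the asserted factorization $\psi_t^{[M]}=\delta_1\cdots\delta_L$ with every $\delta_l=e_{i_l}(Zr_l)$ is not established, and the final step collapses: substituting $t^{N_0}Z$ for $Z$ cannot remove denominators from factors that do not involve $Z$.

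This is precisely the difficulty that the paper's Lemma \ref{final} is engineered to circumvent, and it is why $T$ must be kept as an honest indeterminate until the very end. There, $\tau\varepsilon(TZ)\tau^{-1}$ is handled by induction on the length of $\tau$: after one application of Lemma \ref{linelemconj}, each $T$-vanishing factor is fed back into the induction with $R[Z]$ as base ring and $T$ playing the role of $Z$ (so a high power $t^NT$ is substituted into it), while each $Z$-vanishing factor is treated with $R[T]$ as base and $t^NZ$ substituted; only after every factor has been lifted into $\EA_{n+1+p}(R[Z,T])$---that is, after all denominators are already gone---is $T$ finally set equal to $1$. If you wish to keep your one-conjugator-at-a-time scheme, you must likewise postpone the specialization $T=1$ and carry the $T$-vanishing factors along with their own $t$-power substitutions (which is, in effect, reproving Lemma \ref{final}); as written, the argument fails at the point indicated.
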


This proves Lemma \ref{cong} as follows:  Write $c=d+t^Nb$, and note that $\psi(d,t^Nb)$ is stably tame over $R$, being a specialization of $\psi(W,t^NZ)$, and that it equals $\varphi^c(\varphi^d)^{-1}$.

So now we proceed to prove the claim.  Note that $\psi(W,0)=\text{id}$, i.e., $\psi$ has the form $X+ZH$.  Since $\varphi_t\in\T_n^0(R_t)=\langle\,\GL_n(R_t)\,,\,\EA_n^0(R_t)\,\rangle$, and since the operations $*^{W+Z}$ and $*^W$ clearly carry $\EA_n^0(R_t)$ into $\EA_n^0(R_t[W,Z])$ and fix elements of $\GL_n(R_t)$, we have $\psi_t(W,Z)\in\langle\,\GL_n(R_t)\,,\,\EA_n^0(R_t[W,Z])\,\rangle$.  Additionally, if $\rho$ is elementary and origin preserving over $R_t[W,Z]$, say $\rho=e_i(f)$ with $f\in R_t[W,Z][\Xmi]$, then by writing $f=g+Zh$ with $g\in R_t[W][\Xmi]$, $h\in R_t[W,Z][\Xmi]$, we can write $\rho=\sigma\varepsilon$ with $\sigma=e_i(g)$, $\varepsilon=e_i(Zh)$.  Note that $\sigma$ is elementary over $R_t[W]$ and $\varepsilon$ is elementary over $R_t[W,Z]$, both origin preserving, and that $\varepsilon$ is $Z$-vanishing.  Thus we see that $\psi_t$ lies in the group generated by $\T_n^0(R_t[W])$  together with the origin preserving, $Z$-vanishing elementary automorphisms over $R_t[W,Z]$.  Therefore we can write $\psi_t=\tau_1\varepsilon_1\cdots\tau_r\varepsilon_r$ with $\tau_1,\ldots,\tau_r\in\T_n^0(R_t[W])$ and $\varepsilon_1,\ldots,\varepsilon_r$ elementary, origin preserving, and $Z$-vanishing over $R_t[W,Z]$.  We then have $\tau_1\cdots\tau_r=\psi_t(W,0)=\id$, and we therefore have 
\begin{equation}\label{conjfact}\psi_t=\left(\tau_1\varepsilon_1\tau_1^{-1}\right)\left(\tau_1\tau_2\varepsilon_2(\tau_1\tau_2)^{-1}\right)\cdots\left(\tau_1\tau_2\cdots\tau_r\varepsilon_r(\tau_1\tau_2\cdots\tau_r)^{-1}\right)\end{equation}
The claim will be a consequence of the following lemma (replacing its $R$ by the current $R[W]$).

\begin{lemma}\label{final} Suppose $R$ is a ring, $t\in R$, and $Z$ an indeterminate.  Let $\varepsilon=\varepsilon(Z)$ be an elementary, origin-preserving, and $Z$-vanishing automorphism over $R_t[Z]$, and let $\tau\in\T_n^0(R_t)$.  Then for $N$ sufficiently large, $\tau\varepsilon(t^NZ)\tau^{-1}$ can be written as $\phi_t$ where $\phi$ lies in $\GA_n^0(R[Z])$ and is stably tame over $R[Z]$. More strongly, $\phi$ lies in $\EA_{n+p}^0(R[Z])$ for some $p$.
\end{lemma}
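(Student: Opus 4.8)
The plan is to decompose the hypothesis $\tau\in\T_n^0(R_t)$ into its constituent affine and elementary pieces, and then track what happens to each piece under the conjugation $\tau\varepsilon(t^NZ)\tau^{-1}$ as $N$ grows. First I would write $\tau=\psi_1\psi_2\cdots\psi_s$, where each $\psi_j$ is either an element of $\GL_n(R_t)$ or an origin-preserving elementary automorphism over $R_t$. By the usual telescoping trick, $\tau\varepsilon(t^NZ)\tau^{-1}$ is a product of conjugates of the form $(\psi_1\cdots\psi_j)\,\varepsilon(t^NZ)\,(\psi_1\cdots\psi_j)^{-1}$ divided appropriately — more precisely, one writes $\tau\varepsilon\tau^{-1}$ as a single conjugation and peels off one $\psi_j$ at a time, so it suffices to understand $\psi\,\varepsilon'(t^NZ)\,\psi^{-1}$ where $\psi$ is a single $\GL_n$ or elementary factor and $\varepsilon'(Z)$ is again origin-preserving, $Z$-vanishing, and elementary over some $R_t[Z']$ (possibly with extra polynomial variables absorbed into the coefficient ring). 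The point of the $Z$-vanishing hypothesis is that after each conjugation step the result remains $Z$-vanishing, so the induction on $s$ closes.

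The engine for a single step is Lemma \ref{linelemconj}, which says precisely that $(\psi\,\varepsilon'(TZ)\,\psi^{-1})^{[1]}$ is a finite product of $Z$-vanishing and $T$-vanishing origin-preserving elementary automorphisms over $R_t[Z,T]$ — here I would use the auxiliary variable $T$ of that lemma, and only at the end substitute $T=t^N$. So after processing all $s$ factors of $\tau$, I obtain that $(\tau\varepsilon(TZ)\tau^{-1})^{[p]}$, for some $p$ depending only on $s$, equals a finite product of origin-preserving elementary automorphisms over $R_t[Z,T]$, each of which is either $Z$-vanishing or $T$-vanishing (or both). Now specialize $T=t^N$. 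The $Z$-vanishing elementaries $e_i(Zg(Z,T,X))$ become $e_i(Zg(Z,t^N,X))$, which are elementary over $R_t[Z,X]$ but not yet over $R[Z,X]$. The $T$-vanishing elementaries $e_i(Tg(T,Z,X))$ become $e_i(t^Ng(t^N,Z,X))$; since $g$ has coefficients in $R_t$, for $N$ large enough $t^N$ clears all denominators and these become genuine elementary automorphisms over $R[Z,X]$. For the $Z$-vanishing factors, I would similarly argue that for $N$ large enough the denominators appearing in $g(Z,t^N,X)$ are cleared — this works because $g$ is a fixed polynomial (independent of $N$) in finitely many variables, so only finitely many powers of $t$ appear in the denominators of its coefficients, and a single sufficiently large $N$ handles all of them.

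Thus for $N$ large, every factor in the product expression for $(\tau\varepsilon(t^NZ)\tau^{-1})^{[p]}$ is an origin-preserving elementary automorphism over $R[Z,X]$, i.e., lies in $\EA_{n+p}^0(R[Z])$ (after extension of scalars in the $X$ variables, which are really just dimension variables from the stabilization). Setting $\phi$ to be this $(n+p)$-dimensional automorphism — which by construction has $\phi_t=\tau\varepsilon(t^NZ)\tau^{-1}$ after we re-contract the stabilizing variables, or more honestly, $\phi$ is the stabilized automorphism and its localization at $t$ is the stabilization of $\tau\varepsilon(t^NZ)\tau^{-1}$ — gives the conclusion, with the stronger statement $\phi\in\EA_{n+p}^0(R[Z])$ immediate. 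The main obstacle I anticipate is bookkeeping rather than conceptual: one must make sure the clearing-of-denominators step is genuinely uniform, i.e., that a single $N$ works for all the finitely many elementary factors produced by the $s$-fold application of Lemma \ref{linelemconj}, and one must be careful that the auxiliary variable $T$ is not confused with the dimension variables, and that ``origin-preserving'' is preserved at every stage (which is exactly what the origin-preserving hypotheses on $\varepsilon$ and $\tau$, together with Lemma \ref{linelemconj}, are designed to guarantee). The subtlety flagged in the paper's remarks — that the number $p$ of new dimensions is allowed to depend on $N$ — does not bite here because $p$ depends only on $s$, the length of a fixed factorization of $\tau$, which is chosen before $N$.
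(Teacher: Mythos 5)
There is a genuine gap, and it sits exactly at the step you flagged as ``bookkeeping'': the claim that after specializing $T=t^N$ the $Z$-vanishing factors become integral over $R$. A $Z$-vanishing origin-preserving elementary factor produced by Lemma \ref{linelemconj} has the form $e_i\bigl(Zg(Z,T,X)\bigr)$ with $g$ having coefficients in $R_t$ (for instance, in the $\GL_n$ case of that lemma the factor is $\nu=(X,Y+Zg(TZ,\mathcal A^{-1}X))$, whose coefficients involve entries of $\mathcal A^{-1}\in\GL_n(R_t)$). Terms of $g$ of degree zero in $T$ are untouched by the substitution $T=t^N$, so their denominators are never cleared, no matter how large $N$ is; e.g.\ if $g=\tfrac{1}{t}X_2$ then $g(Z,t^N,X)=\tfrac{1}{t}X_2$ for every $N$. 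The only way to produce an overall factor $t^N$ in such a factor is to scale its \emph{vanishing} variable, i.e.\ to substitute $t^NZ$ for $Z$; scaling the auxiliary variable $T$ only rescues the $T$-vanishing factors. This is precisely why the paper's proof scales \emph{both} variables, passing to $\omega_i(t^NZ,t^NT)$ before finally setting $T=1$ (and thereby proving the statement for $\varepsilon(t^{2N}Z)$, which is harmless since $N$ is arbitrary), rather than setting $T=t^N$ as you propose.

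A second, related problem is your intermediate claim that iterating Lemma \ref{linelemconj} through all $s$ factors of $\tau$, with a single auxiliary variable $T$, yields a product of elementaries over $R_t[Z,T]$ each of which is $Z$- or $T$-vanishing. The lemma only describes the conjugate of a vanishing elementary whose vanishing variable has been multiplied by a \emph{fresh} scaling variable; after the first application, the factors $\omega_i(Z,T)$ must still be conjugated by the remaining word $\tau'$, and to feed such a factor back into Lemma \ref{linelemconj} you must introduce a new scaling variable (setting the old one to $1$ destroys the vanishing structure you need later for clearing denominators). The paper resolves this by an induction on the length $r$ of the factorization of $\tau$ in which the inductive hypothesis is Lemma \ref{final} itself, applied with base ring $R[Z]$ (for the $T$-vanishing $\omega_i$) or $R[T]$ (for the $Z$-vanishing $\omega_i$): the inductive call is exactly what conjugates each $\omega_i$ by the shorter word $\tau'$, scales its vanishing variable by $t^N$, and lifts the result to $\EA_{n+1+p_i}^0$ over $R[Z,T]$, at the cost of extra stabilizing dimensions accumulating with $r$. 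Your outline correctly isolates Lemma \ref{linelemconj} as the engine and correctly notes that $p$ should depend only on the chosen factorization of $\tau$, but without this recursive structure (or some substitute mechanism that scales the vanishing variable of every factor) the denominator-clearing step does not go through.
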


To see that Lemma \ref{final} implies the claim, note that, applying the lemma to the factors on the right side of equation (\ref{conjfact}), we can produce $\phi\in\GA_n^0(R[W,Z])$ $\cap\,\EA_{n+p}^0(R[W,Z])$ such that $\psi_t(W,t^NZ)=\phi_t$.  If $t$ is not a zero divisor, we are done, since in that case the localization homomorphism $R\to R_t$ is injective, so we can conclude $\psi(W,t^NZ)=\phi$.  In the general case, we can replace $\phi=\phi(W,Z)$ by $\phi(W,Z)\phi(W,0)^{-1}$ to arrange that $\phi$ is $Z$-vanishing (and again in $\GA_n^0(R[W,Z])\cap\EA_{n+p}^0(R[W,Z])$), and since $\psi$ is $Z$-vanishing, we still have $\psi_t(W,t^NZ)=\phi_t$.  Now by Lemma \ref{loclem} we have $\psi(W,t^{N+M}Z)=\phi(W,t^MZ)$ for $M$ sufficiently large, which proves the claim.

Now we prove Lemma \ref{final}.  If $\tau\in T_n^0(R_t)$ we can write $\tau=\gamma_1\ldots\gamma_r$, where, for each $i$, $\gamma_i$ is elementary and origin preserving over $R_t$ or $\gamma_i\in\GL_n(R_t)$. We will use induction on $r$.  We now introduce a new variable $T$ and consider $\varepsilon(TZ)$. The case $r=1$ follows directly from Lemma \ref{linelemconj}, applied to $R_t$ instead of $R$, by substituting $t^NZ$ for $Z$ and $t^N$ for $T$ with $N$ sufficiently large. So let $r\geq 2$. Put $\gamma=\gamma_r$ and  $\tau'=\gamma_1\ldots \gamma_{r-1}$. So $\tau=\tau'\gamma$ and $\tau\varepsilon(TZ){\tau}^{-1}=\tau'(\gamma\varepsilon(TZ)\gamma^{-1}){\tau'}^{-1}$. Going up one dimension to $\GA_{n+1}(R_t[Z,T])$ the same equation becomes $$(\tau\varepsilon(TZ){\tau}^{-1})^{[1]}=\tau'^{[1]}(\gamma\varepsilon(TZ)\gamma^{-1})^{[1]}({\tau'^{[1]}})^{-1}\,.$$  By Lemma \ref{linelemconj} $(\gamma\varepsilon(TZ){\gamma}^{-1})^{[1]}=\omega_1\cdots\omega_s$, where each
$\omega_i=\omega_i(T,Z)$ is either a $T$-vanishing or a $Z$-vanishing elementary origin preserving element of $EA_{n+1}(R_t[Z,T])$.
Observe that  $$\tau'^{[1]}(\omega_1\cdots\omega_s){(\tau'^{[1]})}^{-1}=(\tau'^{[1]}\omega_1{(\tau'^{[1]})}^{-1})\cdots 
(\tau'^{[1]}\omega_s{(\tau'^{[1]})}^{-1}).$$  If $\omega_i$ is $T$-vanishing it follows from the induction hypothesis, applied to the ring $R[Z]$ instead of $R$, that there exists $p_i\geq 1$ such that for sufficiently large $N$ 
$$
\left(\tau'^{[1]}\omega_i(t^NT,Z)(\tau'^{[1]})^{-1}\right)^{[p_i]}\text{ lifts to }EA_{n+1+p_i}(R[Z][T]).
$$
Similarly, if $\omega_i$ is $Z$-vanishing we apply the induction hypothesis to the ring $R[T]$ to see that there exists $p_i\geq 1$ such that for sufficiently large $N$
$$
\left(\tau'^{[1]}\omega_i(T,t^NZ)(\tau'^{[1]})^{-1}\right)^{[p_i]}\text{ lifts to }EA_{n+1+p_i}(R[Z][T]).
$$  
Taking $p$ to be the maximum of all $p_i$, then for sufficiently large $N$ each of the automorphisms $(\tau'^{[1]}\omega_i(t^NZ, t^NT)(\tau'^{[1]})^{-1})^{[p]}$ lifts to $EA_{n+1+p}(R[Z,T])$.  Setting $T=1$ we obtain that $\tau\varepsilon(t^{2N}Z)\tau^{-1})^{[p+1]}$ lifts to $EA_{n+1+p}(R[Z])$,
as desired. 

This completes the proof of Theorem \ref{main1}, and hence of Theorems \ref{main} and \ref{main0}.

\section{Further Remarks and Conclusions}\label{further}  In \cite{S-U} it was shown that there exist many non-tame automorphisms which fix one variable in dimension three over a field of characteristic zero.  Nevertheless, Corollary \ref{cor} shows that all such automorphisms are stably tame.  Consequently, if $\GA_3(k)$, for $k$ a field, were generated by $\GL_3(k)$ together with the automorphisms that fix one variable, then all elements of $\GA_3(k)$ would be stably tame.  This raises another question, for which we make the following definition:

\begin{defi}\label{weak}  For $R$ a ring we say that an automorphism $\phi\in\GA_n(R)$ is {\it weakly tame} if it is in the subgroup generated by $\Af_n(R)$ together with all automorphisms which fix one variable.  We will denote the subgroup of weakly tame automorphisms by $\WT_n(R)$.
\end{defi}

Now we can pose:

\begin{quest}[Weak Generators Problem]\label{quest}  {\bf WGP(n):}  Let $k$ be a field and  $n$ an integer $\ge1$.  Are all $n$-dimensional automorphisms weakly tame, i.e., is $\WT_n(k)=\GA_n(k)$?
\end{quest}

For $n=1$ this is trivially affirmative (note that the use of $\Af$ rather than $\GL$ in \ref{weak} assures this).  If $n=2$ a positive answer follows easily from the Jung-Van der Kulk Theorem (Theorem \ref{JvdK}).  However, for $n\ge3$ the problem remains open.

Note that ``stable weak tameness" holds no interest, since all automorphisms become weakly tame upon adding one new dimension.  However the following theorem relates weak tameness to stable tameness:

\begin{theo}  Let $n\ge1$ be fixed.  If $\text{WGP}(m)$ has an affirmative answer for $1\le m\le n$, then, for any regular ring $R$, all elements of $\GA_n(R)$ are stably tame.
\end{theo}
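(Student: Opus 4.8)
The plan is to argue by induction on $n$, using Theorem~\ref{main1} to descend from regular rings to fields and the hypothesis $\WT_n(k)=\GA_n(k)$ to decompose an automorphism over a field into tractable pieces. The base case $n=1$ is immediate: an automorphism in $\GA_1(k)$ that fixes ``one variable'' is the identity, so $\WT_1(k)=\Af_1(k)\subseteq\T_1(k)$, every element of $\GA_1(k)$ is tame for every field $k$, and Theorem~\ref{main1} promotes this to regular rings.

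For the inductive step, fix $n\ge2$ and assume the theorem for $n-1$ (its hypothesis, $\text{WGP}(m)$ for $1\le m\le n-1$, is implied by ours). By Theorem~\ref{main1} it suffices to show that every $\varphi\in\GA_n(k)$ is stably tame for every field $k$. First I would record that the stably tame elements of $\GA_n(k)$ form a subgroup: stabilization carries $\T$ into $\T$, so two stably tame automorphisms can be stabilized to a common level at which their product and inverses are visibly tame. Since the generators of $\WT_n(k)$ and their inverses are either affine or fix one coordinate, it then suffices to show each such generator is stably tame. An affine automorphism is already tame ($\GL_n\subseteq\T_n$ and translations lie in $\EA_n$), so the only real case is an automorphism $\varphi$ fixing, say, $X_n$. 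Such a $\varphi$ is precisely the datum of an element of $\GA_{n-1}(k[X_n])$ under restriction of scalars; since $k[X_n]$ is a regular ring, the inductive hypothesis gives $\varphi^{[m]}\in\T_{n-1+m}(k[X_n])$ for some $m\ge0$.

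The one point requiring care --- and the step I expect to be the main (though mild) obstacle --- is that restriction of scalars need not carry $\T_{n-1+m}(k[X_n])$ into $\T_{n+m}(k)$, precisely because of the linear factors (the embedding does not a priori place $\T_{m}(R^{[1]})$ inside $\T_{m+1}(R)$). Here I would use that $k[X_n]$ is Euclidean: as in Remark~\ref{eucl}, $\GL_N(k[X_n])=\langle\,\E_N(k[X_n]),\Di_N(k[X_n])\,\rangle$ for all $N$, and since $k[X_n]^{*}=k^{*}$ we have $\Di_N(k[X_n])=\Di_N(k)$. Under restriction of scalars, both $\E_N(k[X_n])\subseteq\EA_N(k[X_n])$ and $\EA_N(k[X_n])$ land inside $\EA_{N+1}(k)$, while $\Di_N(k)\subseteq\GL_N(k)$ lands inside $\GL_{N+1}(k)$; hence $\T_{n-1+m}(k[X_n])$ maps into $\langle\,\GL_{n+m}(k),\EA_{n+m}(k)\,\rangle=\T_{n+m}(k)$. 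Therefore $\varphi^{[m]}\in\T_{n+m}(k)$, so $\varphi$ is stably tame over $k$. Combining the two cases, every element of $\GA_n(k)$ is stably tame for every field $k$, and Theorem~\ref{main1} then yields the statement for every regular ring, completing the induction.
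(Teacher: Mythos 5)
Your proposal is correct and follows essentially the same route as the paper's proof: induction on $n$, reduction to fields via the General Form of the Main Theorem, use of WGP$(n)$ to reduce to automorphisms fixing one variable viewed in $\GA_{n-1}(k[X_n])$, and transfer back to $k$ via the Euclidean-domain observation of Remark \ref{eucl}. The only differences are cosmetic: you start the induction at $n=1$ rather than invoking Theorem \ref{main0} for $n\le 2$, and you spell out explicitly the subgroup argument and the $\langle\,\E,\Di\,\rangle$ decomposition that the paper leaves to the cited remark.
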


\begin{proof}We proceed by induction on $n$.  The case $n\le2$ is known by Theorem \ref{main0}, so let $n\ge3$ and assume the theorem holds for integers $<n$. Assume $\text{WGP}(m)$ holds for $1\le m\le n$.  The induction hypothesis tells us that elements of $\GA_m(R)$ are stably tame, for $R$ a regular ring and $1\le m <n$.  

By Theorem \ref{main} it suffices to show all elements of $\GA_n(k)$ are stably tame for $k$ a field.  So let $\varphi\in\GA_n(k)$, and by the hypothesis we may assume that $\varphi$ fixes one variable, say $X_1$.  Letting $R=k[X_1]$, we therefore have $\varphi\in\GA_{n-1}(R)$.  Since $R$ is regular, the last assertion in the previous paragraph yields that $\varphi$ is stably tame over $R$, hence over $k$ (see Remark \ref{eucl}).
\end{proof}

\bibliographystyle{amsplain}
\bibliography{Refs}

\vskip 15pt
\noindent{\small \sc Department of Mathematics, Washington University in St.
Louis,
St. Louis, MO 63130 } 
{\em E-mail}: wright@math.wustl.edu

\noindent{\small \sc Department of Mathematics, Radboud University,
Nijmegen, The Netherlands } {\em E-mail}: j.berson@science.ru.nl, essen@math.ru.nl

\end{document}